\newcommand{\diagram}{\xymatrix} 
\newcommand{\arrowlength}{3.5}
\newcommand{\medarrowlength}{5} 
\newcommand{\myarrow}[3] 
   {\mathrel{\xy\ar(#1,0)^-{#2}_-{#3}\endxy}}
\renewcommand{\rightarrow}
   {\myarrow{\arrowlength}{}{}}
\renewcommand{\leftarrow}
   {\myarrow{-\arrowlength}{}{}}
\newcommand{\stackrightarrow}[1]
   {\myarrow{\arrowlength}{#1}{}}
\newcommand{\myArrow}[3] 
   {\mathrel{\xy\ar@{=>}(#1,0)^-{#2}_-{#3}\endxy}}
\renewcommand{\Rightarrow}
   {\myArrow{\arrowlength}{}{}}
\renewcommand{\Leftarrow}
   {\myArrow{-\arrowlength}{}{}}
\newcommand{\myEq}[1] 
   {\mathrel{\xy\ar@{=}(#1,0)\endxy}}
\newcommand{\mymono}[2] 
   {\mathrel{\,\;\xy\ar@{>->}(#1,0)^-{#2}\endxy}}
\newcommand{\rightmono}
   {\mymono{\arrowlength}{}}
\newcommand{\myepi}[3] 
   {\mathrel{\,\;\xy\ar@{->>}(#1,0)^-{#2}_-{#3}\endxy}}
\newcommand{\rightepi}
   {\myepi{\medarrowlength}{}{}}
\newcommand{\leftepi}
   {\myepi{-\medarrowlength}{}{}}
\newcommand{\myembedding}[2] 
   {\mathrel{\:\xy\ar@{^(->}(#1,0)^-{#2}\endxy}}
\newcommand{\rightembedding}
   {\myembedding{\arrowlength}{}}
\newcommand{\verticalarrow}[3] 
{\xy (0,4)*+{\mbox{\scriptsize$#1$}}
     \ar(0,-4)*+{\mbox{\scriptsize$#3$}} _-{\mbox{$#2$}} 
 \endxy}
\newcommand{\mymapsto}[2] 
  {\mathrel{\xy\ar@{|->}(#1,0)^-{#2}\endxy}}
\renewcommand{\mapsto}
  {\mymapsto{\arrowlength}{}}
\newcommand{\mydasharrow}[3] 
   {\mathrel{\xy\ar@{.>}(#1,0)^-{#2}_-{#3}\endxy}}
\newcommand{\adjunction}[2] 
{\diagram{\ar[r]<-1ex>_-{#1} \ar@0[r]|-\top& \ar[l]<-1ex>_-{#2}}}
\newcommand{\reflection}[2] 
{\diagram@C=15pt{\ar@{^(->}[r]<-1ex>_-{#1} \ar@0[r]|-\bot& \ar[l]<-1ex>_-{#2}}}
\newcommand{\coreflection}[2] 
{\diagram{\ar@{^(->}[r]<-1ex>_-{#1} \ar@0[r]|-\top& \ar[l]<-1ex>_-{#2}}}
\newcommand{\fib}[3] 
{\xy (0,4)*+{\mbox{\scriptsize$#1$}} 
     \ar(0,-4)*+{\mbox{\scriptsize$#3$}} _-{\mbox{$#2$}} 
 \endxy}
\newcommand{\fibmapsto}[3] 
{\xy (0,4)*+{\mbox{\scriptsize$#1$}} 
 \ar@{|.>}(0,-4)*+{\mbox{\scriptsize$#3$}} _-{\mbox{$#2$}} 
 \endxy}
\newcommand{\leftparallelarrow} 
  {\diagram@C=15pt{& \ar[l]<-1ex> \ar[l]<1ex>}}
\newcommand{\stackleftparallelarrow}[2]              
  {\diagram{& \ar[l]<-1ex>_-{#1} \ar[l]<1ex>^-{#2}}} 
\newcommand{\stackleftrightarrow}[2]                 
  {\diagram{\ar[r]<0.5ex>^-{#1} & \ar[l]<0.5ex>^-{#2}}} 
\newcommand{\natrightarrow}
  {\stackrightarrow{.}}
\newcommand{\shortarrowlength}{3} 
\newcommand{\halfarrowlength}{2} 
\newcommand{\relrightarrow}
{\mathrel{\xy\ar@{-}(\halfarrowlength.5,0)\endxy\xy\ar@{|->}(\shortarrowlength.5,0)\endxy}}
\newtheorem{theorem}{Theorem}[section]
\newtheorem{corollary}[theorem]{Corollary}
\newtheorem{lemma}[theorem]{Lemma}
\newtheorem{proposition}[theorem]{Proposition}
\newdefinition{definition}[theorem]{Definition}
\newdefinition{remark}[theorem]{Remark}
\newproof{proof}{Proof}
\newproof{notation}{\textit{Notation}}
\newenvironment{myitemize}
  {\begin{list}{$\bullet$}
  {\setlength{\topsep}{2pt}
   \setlength{\partopsep}{2pt}
   \setlength{\itemsep}{2.5pt}
   \setlength{\parsep}{2.5pt}
   \setlength{\leftmargin}{1em}
   \setlength{\labelwidth}{.5em}}}
  {\end{list}}
\newcommand{\hide}[1]{}
\newcommand{\ie}{i.e.}
\newcommand{\cf}{cf.}
\newcommand{\Def}[1]{{\em #1\/}}
\newcommand{\Rem}[1]{{\em #1\/}}
\newcommand{\noRem}[1]{#1}
\newcommand{\eqdef}{\stackrel{\mathrm{def}}{=}}
\newcommand{\fstarg}{\mbox{$\_\!\_\,$}}
\newcommand{\sndarg}{\mbox{$\fstarg\hspace{-2.53mm}\raisebox{.5mm}{\fstarg}$}}
\newcommand{\scatfont}[1]{\mathbf{#1}}
\newcommand{\lscatfont}[1]{\mbox{\boldmath$\mathscr{#1}$}}
\newcommand{\bicatfont}[1]{\mbox{\boldmath$\mathcal{#1}$}}
\newcommand{\CAT}{\lscatfont{C\hspace{-0.6mm}A\hspace{-0.5mm}T}}
\newcommand{\AF}{\bicatfont{A\hspace{-.5mm}F}}
\newcommand{\gAF}{\AF}
\newcommand{\Cat}{\lscatfont{C}\hspace{-.2mm}\mbox{\boldmath$\mathit{at}$}}
\newcommand{\Set}{\lscatfont{S}\hspace{-.2mm}\mbox{\boldmath$\mathit{et}$}}
\newcommand{\Bij}{\scatfont{P}}
\newcommand{\Scat}{\mbox{\boldmath$\Sigma$}} 
\newcommand{\iso}{\cong}
\newcommand{\dcomp}{\cdot}
\newcommand{\id}{\mathrm{id}}
\renewcommand{\hom}[1]{[#1]}
\newcommand{\bighom}[1]{\big[#1\big]}
\newcommand{\op}{\circ}
\renewcommand{\natrightarrow}{\Rightarrow}
\newcommand{\Sub}{\mathrm{Sub}}
\newcommand{\scat}[1]{{\mathbb{#1}}}
\newcommand{\scatA}{\scat{A}}
\newcommand{\scatB}{\scat{B}}
\newcommand{\scatC}{\scat{C}}
\newcommand{\gpdG}{\scat{G}}
\newcommand{\gpdH}{\scat{H}}
\newcommand{\lscat}[1]{{\mathcal{#1}}}
\newcommand{\pair}[1]{{\langle{#1}\rangle}}
\newcommand{\bigpair}[1]{{\big\langle{#1}\big\rangle}}
\newcommand{\copair}[1]{{[{#1}]}}
\newcommand{\inj}[1]{\amalg_{#1}}
\newcommand{\conerightarrow}{\stackrightarrow{.}}
\newcommand{\Nat}{\mathbb{N}}
\newcommand{\setof}[1]{{\{{#1}\}}}
\newcommand{\bigsetof}[1]{{\big\{{#1}\big\}}}
\newcommand{\Bigsetof}[1]{{\Big\{{#1}\Big\}}}
\newcommand{\suchthat}{\mid}
\newcommand{\yon}{{\mathrm{y}}}
\newcommand{\PSh}[1]{{\widehat{#1}}}
\newcommand{\act}{\cdot}
\newcommand{\ntensor}[1] 
  {\mathrel{\otimes\hspace{-2.5mm}_{\raisebox{-1.35mm}{\tiny$#1$}}}}
\newcommand{\freesmcname}{!}
\newcommand{\freesmc}[1]{{\freesmcname{#1}}}
\newcommand{\listof}[1]  
{\mbox{$\langle\hspace{-.85mm}\raisebox{-.3mm}{\large$[$}$}{#1}\mbox{$\raisebox{-.3mm}{\large$]$}\hspace{-.85mm}\rangle$}}
\newcommand{\scriptlistof}[1]
{\mbox{\scriptsize$\langle\hspace{-.75mm}\raisebox{-.35mm}{\small$[$}$}{#1}\mbox{\scriptsize$\raisebox{-.35mm}{\small$]$}\hspace{-.625mm}\rangle$}}
\newcommand{\card}[1]{{|#1|}}
\newcommand{\indmap}[1]{{\underline{#1}}}
\newcommand{\objmap}[1]{{\overline{#1}}}
\newcommand{\Sf}{{\mathrm{S}}}
\newcommand{\gen}{^\circ}
\newcommand{\lan}[1]{\widetilde{#1}}
\newcommand{\coend}{\int}
\newcommand{\widewidehat}[1]{\widehat{#1}\hspace{-2mm}\raisebox{.5mm}{$\widehat{\hspace{2mm}}$}}
\newcommand{\minipagelength}
{14cm}
\begin{document}

\title{Analytic functors between presheaf categories over
  groupoids\tnoteref{writeupnote}} 
\tnotetext[writeupnote]{A write-up of~\cite{Fiore}.}

\author{Marcelo Fiore\fnref{supportfn}}
\ead{Marcelo.Fiore@cl.cam.ac.uk}
\address{University of Cambridge, Computer Laboratory,\\ 
  15 JJ Thomson Avenue, Cambridge CB3 0FD, UK}
\fntext[supportfn]{Research supported by an EPSRC Advanced Research Fellowship
  (2000--2005).}

\author{\bigskip\bigskip\it
Dedicated to Glynn Winskel on the occasion of his 60$^\mathit{th}$ birthday
}

\begin{abstract}
The paper studies analytic functors between presheaf categories. 
%
Generalising results of A.\,Joyal~\cite{Joyal1234} and
R.\,Hasegawa~\cite{Hasegawa} for analytic endofunctors on the category of
sets, we give two characterisations of analytic functors between presheaf
categories over groupoids: 
(i)~as functors preserving filtered colimits, quasi-pullbacks, and cofiltered
limits; 
and
(ii)~as functors preserving filtered colimits and wide quasi-pullbacks.  
The development establishes that small groupoids, analytic functors between
their presheaf categories, and quasi-cartesian natural transformations between
them form a 
\mbox{$2$-category}.
\end{abstract}

\begin{keyword}
  free symmetric monoidal category\sep analytic functor\sep presheaf
  category\sep groupoid
\end{keyword}

\maketitle

\section{Introduction}

The concept of multivariate analytic functor on the category $\Set$ of
sets and functions was introduced by A.\,Joyal in~\cite{Joyal1234} to
provide a conceptual basis for his theory of combinatorial species of
structures~\cite{JoyalAdv,BLL}.  

A species of structures is a functor from the category 
of finite sets and bijections to $\Set$.  
These can be equivalently presented as functors from the category 
of finite cardinals and permutations to $\Set$, or as symmetric sequences 
$$
P 
= 
\bigsetof{ \ 
  P_n\times \mathfrak S_n \rightarrow P_n 
  : \enspace (p,\sigma) \,\mapsto\, p\act_P\sigma
  \ }_{n\in\Nat}
$$
given by families of set-theoretic representations of the symmetric groups.
Here, the sets $P_n$ are thought of as a species of combinatorial structures
$P$ on an $n$-element set, while the symmetric-group representations induce
isomorphism types that correspond to their unlabelled version.
In general, for a species $P$ and a set of labels $X$, the set of
$X$-labelled $P$-structures is given by 
\begin{equation}\label{LabelledStructures}
\begin{array}{rcll}
\lan P\, X 
&\eqdef&
\sum_{n\in\Nat}\,
  P_n
  \!\mathrel{\times\hspace{-2.5mm}_{\raisebox{-1.35mm}{\tiny$\mathfrak S_n$}}}\!
  X^n
&\qquad(X\in\Set)
\end{array}
\end{equation}
where 
$P_n
\!\mathrel{\times\hspace{-2.5mm}_{\raisebox{-1.35mm}{\tiny$\mathfrak S_n$}}}\!
X^n$
denotes the quotient of $P_n\times X^n$ by the
equivalence relation identifying 
$\big(p,(x_{\sigma 1},\ldots,x_{\sigma n})\big)$ with 
$\big(p\act_P\sigma,(x_1,\ldots,x_n)\big)$ for all $\sigma\in\mathfrak
S_n$, $p\in P_n$, and $x_1,\ldots,x_n\in X$.  In particular, the set $\lan
P\,1$ for a singleton set~$1$ corresponds to that of unlabelled
$P$-structures.

An endofunctor on $\Set$ is said to be analytic if it has a Taylor series
development as in~(\ref{LabelledStructures}) above; that is, if it is
naturally isomorphic to $\lan P$ for some species $P$.  One 
respectively regards species of structures and analytic functors as
combinatorial versions of formal exponential power series and exponential
generating functions.  
%
A.\,Joyal characterised the analytic endofunctors on $\Set$ as those that
preserve filtered colimits, cofiltered limits, and quasi-pullbacks
(equivalently, weak pullbacks).

In~\cite{JoyalAdv}, A.\,Joyal also introduced the notion of a linear
species as a functor from the category of finite linear orders and
monotone bijections to $\Set$; equivalently, an $\Nat$-indexed
family of sets.  Every linear species $L$ 
freely induces a species $L\!\times\!\mathfrak S$ as follows
$$\begin{array}{ll}
(L_n\times\mathfrak S_n)\times\mathfrak S_n
\rightarrow
(L_n\times\mathfrak S_n)
:\ 
\big((\ell,\sigma),\sigma'\big)
\mapsto
\big(\ell,\sigma\act\sigma'\big)
& \qquad(n\in\Nat)
\end{array}$$
Its associated analytic endofunctor $\lan{L\!\times\!\mathfrak S}$ on
$\Set$ is of the form
\begin{equation}\label{GeneratingFunctor}
\begin{array}{rcll}
\lan{L\!\times\!\mathfrak S}\,(X)
&\iso&
\sum_{n\in\Nat}\,
  L_n\times X^n
&\qquad(X\in\Set)
\end{array}
\end{equation}
Thus, one respectively regards linear species and their induced analytic
functors as combinatorial versions of formal power series and generating
functions.  

Independently of the above considerations, the multivariate version of
functors on $\Set$ of the form~(\ref{GeneratingFunctor}) was introduced by
J.-Y.\,Girard in~\cite{Girard} also under the name of analytic functors.
These he characterised as those that preserve filtered colimits, wide
pullbacks, and equalisers.  In~\cite{Taylor}, P.\,Taylor tighten this
characterisation remarking that the preservation of equalisers was redundant. 
R.\,Hasegawa revisited the characterisation of Joyal's analytic
endofunctors on $\Set$ in this light in~\cite{Hasegawa}, observing that
they can be also characterised as those preserving filtered colimits and
weak wide pullbacks (equivalently, wide quasi-pullbacks).
The development of J.-Y.\,Girard 
put this line of work in the context of categorical stable domain theory
(as so did explicitly the subsequent work of P.\,Taylor) 
and was a preliminary step leading to linear logic~\cite{GirardLL}.

A bicategorical framework for the above body of work was put forward by
G.\,L.\,Cattani and G.\,Winskel in~\cite{CattaniWinskel} from the
perspective of presheaf models for concurrency and by M.\,Fiore, N.\,Gambino,
M.\,Hyland and G.\,Winskel in~\cite{Esp} from the viewpoint of species of
structures.  The work reported here supplements the latter one.  Indeed,
we generalise the aforementioned characterisations of analytic
endofunctors on $\Set$ to analytic functors between presheaf categories
over groupoids~(Theorem~\ref{CharacterisationTheorem}); and, 
in this context, exhibit an equivalence of categories between generalised
species of structures and natural transformations, and analytic functors
and quasi-cartesian natural
transformations~(Corollary~\ref{EquivalenceCorollary}).  
This leads to the \mbox{$2$-category} of small groupoids, analytic
functors between their presheaf categories, and quasi-cartesian natural
transformations between them (Corollary~\ref{AFCorollary}), placing the
subject in the context of categorical stable domain theory and providing
\mbox{$2$-dimensional} models of a rich variety of computational
structures (Remark~\ref{FinalRemark}).

The paper contributes thus to one of the many fundamental structures
researched by Glynn Winskel in his work on the mathematical understanding
and modelling of processes.

\section{Free symmetric strict monoidal completion}

We let $\freesmc{}$ be the left adjoint to the forgetful functor from the
category of symmetric strict monoidal small categories and strong monoidal
functors to the category~$\Cat$ of small categories and functors.  For a
small category $\scatC$, the unit of this adjunction is denoted
$\listof{\fstarg}:\scatC\rightarrow\freesmc\scatC$.

The category $\freesmc\scatC$ can be explicitly described by the Grothendieck
construction~\cite{Grothendieck} applied to the functor 
$\scatC^{(\_)}:\Bij\rightarrow\Cat: n \mapsto \scatC^n$ for $\Bij$ the
category of finite cardinals and permutations. 
That is, $\freesmc\scatC$ has objects given by functions 
$C:\card C\rightarrow\scatC$ with $\card C$ in $\Bij$ and morphisms
$\gamma = (\indmap\gamma,\objmap\gamma):C\rightarrow C'$
given as in the following diagram 
$$\xymatrix{
\card C \ar[rr]^-{\indmap\gamma} \ar[rd]_-C &
\ar@{}[d]|(.35){\stackrel{\objmap\gamma}{\natrightarrow}} & 
\card{C'} \ar[dl]^-{C'} \\
& \scatC & 
}$$
with $\indmap\gamma$ in $\Bij$.
Identities are given by 
the maps $(\id_{\card C},\id_C)$, 
while diagrammatic composition is given by 
$\alpha\dcomp\beta 
 \eqdef\big(\indmap\alpha\dcomp\indmap\beta,
      \objmap\alpha\dcomp\objmap\beta_{\indmap\alpha}\big)$.
Thus, 
maps and their composition 
can be visualised as follows
$$
\xymatrix@C=0pt@R=40pt{
A_0 \ar[d]|-{\objmap\alpha_0} & A_1 \ar[dr]|(.3){\objmap\alpha_1} & 
A_2 \ar[dr]|(.65){\objmap\alpha_2} & A_3 \ar[dll]|-{\objmap\alpha_3} 
&&&&
A_0 \ar[ddrr]|(.8){\hspace{3mm}\objmap\alpha_0\dcomp\objmap\beta_0} & 
A_1 \ar[dd]|(.2){\hspace{3mm}\objmap\alpha_1\dcomp\objmap\beta_2} & 
A_2 \ar[ddr]|(.6){\objmap\alpha_2\dcomp\objmap\beta_3} 
& A_3 \ar[ddlll]|(.4){\objmap\alpha_3\dcomp\objmap\beta_1} 
\\
B_0 \ar[drr]|(.5){\objmap\beta_0} & B_1 \ar[dl]|(.65){\objmap\beta1} 
& B_2 \ar[dl]|(.35){\objmap\beta_2} & B_3 \ar[d]|-{\objmap\beta_3} 
&\qquad& = &\qquad& 
\\
C_0 & C_1 & C_2 & C_3 
&&&&
C_0 & C_1 & C_2 & C_3 \\
}
$$

The strict symmetric monoidal structure of $\freesmc\scatC$
has as unit object the empty function $0\rightarrow\scatC$, as tensor
product~$\oplus$ the construction 
$\copair{C,C'}: \card C+\card{C'}\rightarrow\scatC$, and as symmetry
the maps 
$$\xymatrix{
\card C + \card{C'} \ar[rd]_-{\copair{C,C'}}
\ar[rr]^-{\copair{\inj 2,\inj 1}} & 
\ar@{}[d]|(.35){\stackrel{\id}{\Rightarrow}} & 
\card{C'}+\card C \ar[dl]^-{\copair{C',C}}
\\
& \scatC & 
}$$
where $+$ denotes the sum of cardinals, with injections $\inj1$ and $\inj2$,
and copairing $\copair{\fstarg,\sndarg}$.

\medskip
We write $\PSh\scatC$ for the \Rem{presheaf category} $\Set^{\scatC^\op}$ over
a small category $\scatC$.
By the universal property of $\freesmc\scatC$, the Yoneda embedding 
$\yon_\scatC: \scatC \rightembedding \PSh\scatC$ extends as a 
(strong symmetric monoidal) \Rem{sum functor}
$\Sf_\scatC:\freesmc\scatC\rightarrow\PSh\scatC$ (with respect to the
coproduct symmetric monoidal structure of $\PSh\scatC$) as follows
$$\xymatrix{
\scatC \ar@{^(->}[rd]_-{\yon_\scatC} \ar[r]^-{\scriptlistof{\fstarg}}
\ar[r]|-{\hspace{6mm}\raisebox{-12mm}{\scriptsize$\iso$}} & \freesmc\scatC
\ar[d]^-{\Sf_\scatC} \\
& \PSh\scatC
}$$
where
$$\begin{array}{rcll}
\Sf_\scatC(C) & \eqdef & \sum_{i\in\card C}\ \yon_\scatC(C_i)
& \qquad(C\in\freesmc\scatC)
\end{array}$$

Examining the sum functor, one notes that, for $A, B \in \freesmc\scatC$, 
\begin{equation}\label{PShC[SA,SB]}
\begin{array}{rclrcl}
\PSh\scatC\hom{\Sf A,\Sf B}
& \!\!\iso\!\! & 
  \prod_{i\in\card A} \PSh\scatC\hom{\yon(A_i),\Sf B}
& \!\!\iso\!\! & 
  \prod_{i\in\card A} \Sf B(A_i)
\\[2mm]
& \!\!\iso\!\! & 
  \prod_{i\in\card A} \sum_{j\in\card B} \scatC\hom{A_i,B_j}
& \!\!\iso\!\! & 
  \sum_{\varphi\in{\card B}^{\card A}}
    \prod_{i\in\card A}\scatC\hom{A_i,B_{\varphi i}}
\end{array}
\end{equation}
In other words, the full subcategory of $\PSh\scatC$ determined by the set of
objects 
$\setof{\, \Sf C \in \PSh\scatC \suchthat C \in \freesmc\scatC \,}$ is
the free finite coproduct completion of $\scatC$. 

By means of the projection map
$$\textstyle
\sum_{\varphi\in{\card B}^{\card A}}
  \prod_{i\in\card A}\scatC\hom{A_i,B_{\varphi i}}
\ \rightarrow \
\Set(\,\card A,\card B\,)
\, :\, \big(\varphi,\pair{f_i}_{i\in\card A}\big) \,\mapsto\, \varphi
$$
the isomorphism~(\ref{PShC[SA,SB]}) induces a map 
$$
\PSh\scatC\hom{\Sf A,\Sf B}\rightarrow\,\Set(\,\card A,\card B\,)
$$
that associates an \emph{underlying function}~$\card A\rightarrow\card B$ 
to every morphism $\Sf A\rightarrow\Sf B$ in $\PSh\scatC$.

\begin{definition}
For $A,B\in\freesmc\scatC$, we say that $\Sf A \rightarrow \Sf B$ in
$\PSh\scatC$ is \Rem{injective, surjective, or bijective on indices} whenever
its underlying function $\card A \rightarrow \card B$ 
is.  
\end{definition}

\begin{proposition}\label{S_Faithful_And_Conservative}
\begin{enumerate}[(i)]
\item\label{S_Faithful}
The sum functor is faithful.

\item\label{S_Conservative}
If $f: \Sf A\rightarrow \Sf B$ in $\PSh\scatC$ is bijective on indices
then there exists a (necessarily unique) $\gamma: A \rightarrow B$ in 
$\freesmc\scatC$ such that $\Sf\gamma = f$.  Hence, the sum functor is
conservative.
\end{enumerate}
\end{proposition}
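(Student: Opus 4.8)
The plan is to pin down how the sum functor acts on morphisms through the isomorphism~(\ref{PShC[SA,SB]}) and then read off both statements. Write a morphism of $\freesmc\scatC$ as $\gamma = (\indmap\gamma, \objmap\gamma): A \rightarrow B$, with $\indmap\gamma$ a bijection $\card A \rightarrow \card B$ and components $\objmap\gamma_i: A_i \rightarrow B_{\indmap\gamma(i)}$. I claim that under~(\ref{PShC[SA,SB]}) the morphism $\Sf\gamma$ is named by the element $\big(\indmap\gamma, \pair{\objmap\gamma_i}_{i\in\card A}\big)$ of $\sum_{\varphi\in\card B^{\card A}}\prod_{i\in\card A}\scatC\hom{A_i, B_{\varphi(i)}}$; in particular the underlying function of $\Sf\gamma$ is $\indmap\gamma$. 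To verify this I would restrict $\Sf\gamma$ to the $i$-th coproduct summand $\yon(A_i)$ of $\Sf A = \sum_{i}\yon(A_i)$: since $\Sf$ is the strong monoidal extension of the Yoneda embedding, this restriction is the composite of $\yon(\objmap\gamma_i): \yon(A_i) \rightarrow \yon(B_{\indmap\gamma(i)})$ with the coproduct insertion $\yon(B_{\indmap\gamma(i)}) \rightembedding \Sf B$ into the $\indmap\gamma(i)$-th summand. Evaluating at $\id_{A_i}$ as prescribed by the Yoneda lemma and the pointwise formation of coproducts in $\PSh\scatC$, this names precisely the element $(\indmap\gamma(i), \objmap\gamma_i)$ of $\Sf B(A_i) \iso \sum_{j}\scatC\hom{A_i, B_j}$, and collecting over $i$ gives the claim.

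Given this, statement~(\ref{S_Faithful}) is immediate: the assignment $\gamma \mapsto \big(\indmap\gamma, \pair{\objmap\gamma_i}\big)$ is visibly injective, since $\indmap\gamma$ together with the family $\pair{\objmap\gamma_i}$ recovers $\gamma$; hence $\Sf\gamma = \Sf\gamma'$ forces $\gamma = \gamma'$. For the lifting in~(\ref{S_Conservative}), suppose $f: \Sf A \rightarrow \Sf B$ is bijective on indices and let $(\varphi, \pair{f_i})$ be its name under~(\ref{PShC[SA,SB]}), so that the underlying function $\varphi: \card A \rightarrow \card B$ is a bijection. A bijection between finite cardinals is exactly a morphism of $\Bij$, so $\gamma \eqdef (\varphi, \pair{f_i})$ --- with $\indmap\gamma = \varphi$ and $\objmap\gamma_i = f_i: A_i \rightarrow B_{\varphi(i)}$ --- is a genuine morphism $A \rightarrow B$ of $\freesmc\scatC$; by the first paragraph $\Sf\gamma$ is named by $(\varphi, \pair{f_i})$ too, whence $\Sf\gamma = f$. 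Uniqueness of such a $\gamma$ is precisely faithfulness~(\ref{S_Faithful}).

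For conservativity I would first observe that the underlying-function assignment is itself functorial: $\id_{\Sf A}$ is named by $\big(\id_{\card A}, \pair{\id_{A_i}}\big)$, and threading a composite through~(\ref{PShC[SA,SB]}) exhibits its underlying function as the composite of the underlying functions. Consequently, if $\Sf\gamma$ is an isomorphism then its underlying function $\indmap\gamma$, and hence also the underlying function of $(\Sf\gamma)^{-1}$, is a bijection; that is, $(\Sf\gamma)^{-1}$ is bijective on indices. By~(\ref{S_Conservative}) it lifts to a morphism $\delta: B \rightarrow A$ with $\Sf\delta = (\Sf\gamma)^{-1}$, and then $\Sf(\gamma\dcomp\delta) = \Sf\gamma\dcomp\Sf\delta = \id_{\Sf A} = \Sf(\id_A)$ together with the symmetric identity $\Sf(\delta\dcomp\gamma) = \Sf(\id_B)$; faithfulness now yields $\gamma\dcomp\delta = \id_A$ and $\delta\dcomp\gamma = \id_B$, so $\gamma$ is invertible. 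The one genuine obstacle is the bookkeeping of the first paragraph, namely checking that the monoidal extension of Yoneda sends $\gamma$ to the coproduct of the maps $\yon(\objmap\gamma_i)$ reindexed by $\indmap\gamma$; once that naming is established, faithfulness, the lifting, and conservativity all follow formally.
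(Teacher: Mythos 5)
Your proof is correct, and there is nothing to compare it against: the paper states Proposition~\ref{S_Faithful_And_Conservative} without proof, treating it as immediate from the explicit description~(\ref{PShC[SA,SB]}) of $\PSh\scatC\hom{\Sf A,\Sf B}$ and the ensuing notion of underlying function. Your argument supplies exactly that intended route — identifying the name of $\Sf\gamma$ as $\big(\indmap\gamma,\pair{\objmap\gamma_i}\big)$, reading off faithfulness and the lifting of index-bijective maps, and using functoriality of underlying functions to get conservativity — so it fills in the omitted details correctly.
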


\begin{proposition}\label{BijSurInjOnIndices}
\begin{enumerate}[(i)]
  \item \label{BijSurOnIndices}
    For a small category $\scatA$ and $A, A'\in \freesmc\scatA$, every epi
    (resp.~iso) $\Sf A\rightarrow \Sf A'$ in $\PSh\scatA$ is surjective
    (resp.~bijective) on indices.

  \item \label{InjOnIndices}
    For a small groupoid $\gpdG$ and $G, G'\in \freesmc\gpdG$, every mono
    $\Sf G\rightarrow\Sf G'$ in $\PSh\gpdG$ is injective on indices.
\end{enumerate}
\end{proposition}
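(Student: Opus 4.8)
The plan is to reduce both parts to the objectwise behaviour of the morphism, using that in $\PSh\scatC$ coproducts are formed pointwise and that a natural transformation into $\Set$ is epi (resp.\ mono) precisely when each component is surjective (resp.\ injective). Presenting a morphism $f:\Sf A\to\Sf B$ in the form $(\varphi,\pair{f_i}_{i\in\card A})$ given by~(\ref{PShC[SA,SB]})---so $\varphi:\card A\to\card B$ is its underlying function and $f_i:A_i\to B_{\varphi i}$ lives in $\scatC$---and using $\Sf A(X)=\sum_{i\in\card A}\scatC\hom{X,A_i}$, a short Yoneda computation gives the component at $X$ as
$$
f_X:\textstyle\sum_{i\in\card A}\scatC\hom{X,A_i}\rightarrow\sum_{j\in\card B}\scatC\hom{X,B_j}\,,\qquad (i,g)\,\mapsto\,(\varphi i,\,f_i\op g)\,.
$$
From this formula the underlying function of a composite is the composite of underlying functions, so the assignment $f\mapsto\varphi$ is functorial and preserves identities.

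For~(\ref{BijSurOnIndices}), suppose first that $f:\Sf A\to\Sf A'$ is epi, so each $f_X$ is surjective. Fixing $j\in\card{A'}$ and taking $X=A'_j$, surjectivity of $f_{A'_j}$ produces $(i,g)$ with $f_{A'_j}(i,g)=(\varphi i,f_i\op g)=(j,\id_{A'_j})$, forcing $\varphi i=j$; hence $\varphi$ is surjective. If instead $f$ is an isomorphism, then, the assignment $f\mapsto\varphi$ being functorial, it preserves isomorphisms, so $\varphi$ is a bijection. This settles both clauses.

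The substantive case is~(\ref{InjOnIndices}), where the groupoid hypothesis is used decisively: each $f_i:G_i\to G'_{\varphi i}$ is a morphism of $\gpdG$ and is therefore invertible. I would argue by contradiction, assuming $\varphi i=\varphi i'=j$ with $i\neq i'$. Working at $X=G_i$, put $g\eqdef f_{i'}^{-1}\op f_i:G_i\to G_{i'}$ in $\gpdG$; then $f_{i'}\op g=f_i$, so
$$
f_{G_i}(i',g)=(j,\,f_{i'}\op g)=(j,\,f_i)=f_{G_i}(i,\id_{G_i})\,.
$$
Since $i\neq i'$, the pairs $(i',g)$ and $(i,\id_{G_i})$ are distinct elements of the coproduct $\Sf G(G_i)=\sum_k\gpdG\hom{G_i,G_k}$ with the same image, contradicting injectivity of $f_{G_i}$ (which holds because $f$ is mono). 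Hence $\varphi$ is injective. I expect this collision construction to be the only real step: the second preimage $g$ exists exactly because $f_{i'}$ is invertible in the groupoid, and since the injectivity claim is false over a general $\scatA$, any correct proof must invoke the groupoid hypothesis at precisely this point.
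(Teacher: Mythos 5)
Your proof is correct. The paper states Proposition~\ref{BijSurInjOnIndices} without proof, so there is no official argument to compare against; your route --- presenting $f$ via~(\ref{PShC[SA,SB]}) as $(\varphi,\pair{f_i}_{i\in\card A})$, using that epis and monos in presheaf categories are pointwise surjections and injections, evaluating at $A'_j$ against $\id_{A'_j}$ for part~(\ref{BijSurOnIndices}), deducing the iso clause from functoriality of $f\mapsto\varphi$, and using invertibility of $f_{i'}$ in the groupoid to manufacture the collision $(i',f_{i'}^{-1}\op f_i)$ against $(i,\id_{G_i})$ for part~(\ref{InjOnIndices}) --- is precisely the elementary argument the paper leaves to the reader, and your closing observation that the groupoid hypothesis is indispensable is also right (for instance, over the poset $a_1\rightarrow b\leftarrow a_2$ the induced map $\yon(a_1)+\yon(a_2)\rightarrow\yon(b)$ is a mono that is not injective on indices).
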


\section{Analytic 
  functors}

We recall the notion of analytic functor between presheaf categories
introduced in~\cite{Esp}. 
These analytic functors generalise the ones previously introduced by A.\,Joyal
between categories of indexed sets and sets~\cite[\S~1.1]{Joyal1234}, and
are the central structure of study in the paper.   
\begin{definition}
A functor 
$\PSh\scatA\rightarrow\PSh\scatB$ is said to be \Def{analytic} if it
appears in a left Kan extension as follows  
$$\xymatrix{
\freesmc\scatA \ar[r]^-{\Sf_\scatA} \ar[rd] 
\ar@{}[r]|-{\hspace{4.5mm}\raisebox{-13mm}{$\stackrel{\mbox{\tiny$\mathrm{Lan}$}}{\Rightarrow}$}}
& \PSh\scatA \ar[d] \\
& \PSh\scatB
}$$
for some 
functor $\freesmc\scatA\rightarrow\PSh\scatB$.
\end{definition}

That is, analytic functors between presheaf categories are those
naturally isomorphic to the functors $\lan P:\PSh\scatA\rightarrow\PSh\scatB$
given by the following coend 
\begin{equation}\label{P!coend}\begin{array}{rcll}
\lan P\,X\,b & \eqdef &  
\coend^{A\in\freesmc\scatA} P\,A\,b \times \PSh\scatA\bighom{\Sf_\scatA(A),X}
& \qquad\qquad(X\in\PSh\scatA, b\in\scatB^\op)
\end{array}\end{equation}
for some $P: \freesmc\scatA\rightarrow\PSh\scatB$.  

\begin{notation}
For a functor $F: \lscat C\rightarrow\PSh\scatC$ it will be convenient to use
the following notational conventions.  For morphisms $f:A\rightarrow B$ in
$\lscat C$ and $g: c\rightarrow d$ in $\scatC$, and for an element $x\in
F\,A\,d$, we set
$x\act_F f \eqdef (F\,f)_d(x)\in F\,B\,d$; 
$g\act_F x \eqdef F\,A\,g\,(x)\in F\,A\,c$; 
and 
$g\act_F x\act_F f 
   \eqdef 
     g\act_F(x\act_F f) = (g\act_F x)\act_F f \in F\,B\,c$.
\end{notation}

Henceforth, we will use the following explicit description of the 
coend~(\ref{P!coend}):
$$\textstyle
\big(
  \sum_{A\in\freesmc\scatA}
    P\,A\,b \times \PSh\scatA\bighom{\Sf_\scatA(A),X}
\hspace{.3mm}\big)_{\mbox{\large/}\,\approx}
\qquad\qquad(X\in\PSh\scatA, b\in\scatB^\op)
$$
where $\approx$ is the equivalence relation generated by 
\begin{equation}\label{Coend_Equivalence}\begin{array}{rcl}
(A,p,\Sf_\scatA(\alpha)\dcomp x)
& \sim &
(A',p\act_P\alpha,x)
\end{array}\end{equation}
for all $\alpha:A\rightarrow A'$ in $\freesmc\scatA$, $p\in P\,A\,b$,
and $x:\Sf_\scatA(A')\rightarrow X$ in $\PSh\scatA$.
Further, we write $p\ntensor{A}x$ for the equivalence class of $(A,p,x)$.
Under this convention, the identification~(\ref{Coend_Equivalence}) amounts to
the identity
$$\begin{array}{rcl}
p\ntensor{A}\big(\Sf_\scatA(\alpha)\dcomp x\big)
& = & 
\big(p\act_P\alpha\big)\ntensor{{A\!'}}x
\end{array}$$
and the functorial action of $\lan P$ is given by 
$$\begin{array}{rcl}
\beta \act_{\lan P} (p\ntensor{A}x) \act_{\lan P} f
& \eqdef & 
(\beta \act_P p) \ntensor{A} (x \dcomp f)
\end{array}$$
for all $(p\ntensor{A}x)\in\lan P\,X\,b$, $f:X\rightarrow X'$ in
$\PSh\scatA$ and $\beta:b'\rightarrow b$ in $\scatB$.

\begin{notation}
For categories $\lscat A$ and $\lscat B$, we let $\CAT\hom{\lscat A,\lscat B}$
denote the category of functors $\lscat A\rightarrow\lscat B$ and natural
transformations between them.
\end{notation}

\begin{proposition}
The functor 
$\lan{(\fstarg)}:
\CAT\bighom{\freesmc\scatA,\PSh\scatB}\rightarrow\CAT\bighom{\PSh\scatA,\PSh\scatB}$
is faithful.
\end{proposition}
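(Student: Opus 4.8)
Faithfulness means that whenever $\lan\phi=\lan\psi$ for parallel natural transformations $\phi,\psi:P\rightarrow Q$ between functors $\freesmc\scatA\rightarrow\PSh\scatB$, then $\phi=\psi$. The plan is to reconstruct each component of $\phi$ from the single induced map $\lan\phi:\lan P\rightarrow\lan Q$, so that the two reconstructions must coincide. Recall that $\lan\phi$ acts on coend classes by $\lan\phi\,(p\ntensor{A}x)=\phi_{A,b}(p)\ntensor{A}x$ for $p\in P\,A\,b$, the naturality of $\phi$ being exactly what makes this assignment compatible with the equivalence~(\ref{Coend_Equivalence}). I would evaluate the components of $\lan\phi$ at the objects $\Sf_\scatA A\in\PSh\scatA$ on the distinguished elements $p\ntensor{A}\id$, with $\id=\id_{\Sf_\scatA A}$, obtaining
$$(\lan\phi)_{\Sf_\scatA A,b}\,(p\ntensor{A}\id)=\phi_{A,b}(p)\ntensor{A}\id .$$

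Everything thus reduces to the claim that, for each $A\in\freesmc\scatA$ and $b\in\scatB^\op$, the \emph{unit map}
$$\eta_{A,b}: Q\,A\,b\rightarrow\lan Q\,(\Sf_\scatA A)\,b \, , \qquad q\mapsto q\ntensor{A}\id$$
is injective. Granting this, $\lan\phi=\lan\psi$ gives $\eta_{A,b}\big(\phi_{A,b}(p)\big)=\eta_{A,b}\big(\psi_{A,b}(p)\big)$ for all $p\in P\,A\,b$, whence $\phi_{A,b}=\psi_{A,b}$ for all $A$ and $b$; that is, $\phi=\psi$.

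To prove the claim I would construct a left inverse of $\eta_{A,b}$, where the coend computing $\lan Q\,(\Sf_\scatA A)\,b$ has as representatives triples $(A',p,x)$ with $p\in Q\,A'\,b$ and $x:\Sf_\scatA A'\rightarrow\Sf_\scatA A$. The decisive observation is that every morphism of $\freesmc\scatA$ is bijective on indices; consequently the underlying function of $\Sf_\scatA(\alpha)\dcomp x$ is obtained from that of $x$ by precomposition with the bijection $\indmap\alpha$, so a generating step of~(\ref{Coend_Equivalence}) relates triples that are either both bijective on indices or both not. Since $(A,q,\id)$ is bijective on indices, its entire $\approx$-class consists of triples $(A',p,x)$ with $x$ bijective on indices. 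For such a triple, Proposition~\ref{S_Faithful_And_Conservative}(\ref{S_Conservative}) supplies a unique $\gamma:A'\rightarrow A$ in $\freesmc\scatA$ with $\Sf_\scatA\gamma=x$, and I would set $\rho(A',p,x)\eqdef p\act_Q\gamma\in Q\,A\,b$. Along a generating step $(A',p,\Sf_\scatA(\alpha)\dcomp x)\sim(A'',p\act_Q\alpha,x)$, writing $x=\Sf_\scatA\epsilon$ and $\Sf_\scatA(\alpha)\dcomp x=\Sf_\scatA\delta$ for the unique comparison morphisms $\epsilon:A''\rightarrow A$ and $\delta:A'\rightarrow A$, functoriality of $\Sf_\scatA$ yields $\Sf_\scatA\delta=\Sf_\scatA(\alpha\dcomp\epsilon)$, hence $\delta=\alpha\dcomp\epsilon$ by uniqueness, so that $p\act_Q\delta=(p\act_Q\alpha)\act_Q\epsilon$; thus $\rho$ is constant on the class of $(A,q,\id)$. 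As $\rho(A,q,\id)=q\act_Q\id=q$, the map $\eta_{A,b}$ is injective.

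The main obstacle is precisely this injectivity of the unit. It is delicate because $\Sf_\scatA$ is in general not full, so $\lan Q\op\Sf_\scatA$ need not be isomorphic to $Q$ and the easy fully faithful case of the Kan-extension unit is unavailable. The argument instead confines every zig-zag of~(\ref{Coend_Equivalence}) issuing from $(A,q,\id)$ to representatives that are bijective on indices, where the facts about $\freesmc\scatA$ recorded above — bijectivity of index maps together with Proposition~\ref{S_Faithful_And_Conservative}(\ref{S_Conservative}) — pin down the comparison maps as images under $\Sf_\scatA$ and render the retraction $\rho$ well defined.
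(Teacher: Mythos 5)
Your proof is correct and follows essentially the same route as the paper: the paper likewise reduces faithfulness to the injectivity of the unit $q\mapsto q\ntensor{A}\id_{\Sf A}$ (its Corollary~\ref{p*id=p'*id_=>_p=p'}), which it deduces from an invariance property of the generating steps of the coend relation (Lemma~\ref{Invariance_Lemma}) using exactly the ingredients you use, namely bijectivity on indices of the maps $\Sf\alpha$ together with Proposition~\ref{S_Faithful_And_Conservative}. Your retraction $\rho(A',p,x)=p\act_Q\gamma$ (for the unique $\gamma$ with $\Sf\gamma=x$) is precisely the invariant that the paper's lemma tracks along zig-zags; the only difference is that the paper isolates this as a slightly more general lemma about elements of the form $p_0\ntensor{A_0}\Sf(\alpha_0)$, which it then reuses later (in the proof of Proposition~\ref{Uniqueness_Of_Coefficients_Generalisation}).
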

This is a consequence of the following.
\begin{lemma}\label{Invariance_Lemma}
Let $P: \freesmc\scatA\rightarrow\PSh\scatB$.
For $\alpha_0: A_0\rightarrow A$ in $\freesmc\scatA$, and  
$p_0\ntensor{A_0}\Sf(\alpha_0)$ and $p_1\ntensor{A_1}x_1$ in 
$\lan P(\Sf A)(b)$, if 
$p_0\ntensor{A_0}\Sf(\alpha_0) = p_1\ntensor{A_1}x_1$ then there exists
(a necessarily unique) $\alpha_1: A_1 \rightarrow A$ in
$\freesmc\scatA$ such that $x_1 = \Sf(\alpha_1)$ and 
$p_0\act_P \alpha_0 = p_1 \act_P\alpha_1$.  
\end{lemma}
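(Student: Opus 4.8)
The plan is to isolate a numerical invariant of the coend classes that forces $x_1$ to come from $\freesmc\scatA$, and then to extract the required equality of $P$-values by a density argument. First I would call a representative $(A',p,x)$ of a class in $\lan P(\Sf A)(b)$, with $p\in P\,A'\,b$ and $x:\Sf A'\rightarrow\Sf A$, \emph{non-singular} when its underlying function $\card{A'}\rightarrow\card{A}$ is a bijection. The key observation is that non-singularity is preserved by each generating step~(\ref{Coend_Equivalence}): such a step relates $(\tilde A,p,\Sf(\gamma)\dcomp x)$ with $(\tilde A',p\act_P\gamma,x)$ for some $\gamma:\tilde A\rightarrow\tilde A'$, and since the underlying function of $\Sf(\gamma)$ is the index bijection $\indmap{\gamma}$, the two sides have underlying functions differing by precomposition with a bijection, so one is non-singular iff the other is. Consequently non-singularity is constant along every $\approx$-zigzag; in particular it is a well-defined property of classes, and any zigzag joining two non-singular representatives passes through non-singular representatives only.

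Second, I would apply this to the hypothesis. The representative $(A_0,p_0,\Sf(\alpha_0))$ is non-singular, because the underlying function of $\Sf(\alpha_0)$ is $\indmap{\alpha_0}$, a bijection. Hence the equal class forces $(A_1,p_1,x_1)$ to be non-singular, i.e.\ $x_1$ has bijective underlying function. By~(\ref{PShC[SA,SB]}) a morphism $\Sf A_1\rightarrow\Sf A$ lies in the image of the sum functor precisely when its underlying function is a bijection, so $x_1=\Sf(\alpha_1)$ for some $\alpha_1:A_1\rightarrow A$ in $\freesmc\scatA$; this $\alpha_1$ is unique since the sum functor is faithful (Proposition~\ref{S_Faithful_And_Conservative}(\ref{S_Faithful})). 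This already yields the existence and uniqueness of $\alpha_1$ with $x_1=\Sf(\alpha_1)$.

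Third, to recover the equality $p_0\act_P\alpha_0=p_1\act_P\alpha_1$, I would build a function on the non-singular part of the coend. On non-singular representatives set $\Phi(A',p,x)\eqdef p\act_P\alpha'$, where $\alpha':A'\rightarrow A$ is the unique morphism with $\Sf(\alpha')=x$. Using functoriality of the sum functor, so that $\Sf(\gamma)\dcomp\Sf(\alpha'')=\Sf(\gamma\dcomp\alpha'')$, together with functoriality of the action of $P$, both sides of any non-singular instance of~(\ref{Coend_Equivalence}) evaluate to $(p\act_P\gamma)\act_P\alpha''$; hence $\Phi$ is constant along each elementary move and, by the confinement established in the first step, descends to a well-defined function on non-singular classes. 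Conceptually $\Phi$ is the density isomorphism $\coend^{A'}\freesmc\scatA\hom{A',A}\times P\,A'\,b\iso P\,A\,b$ transported along the sum functor, exhibiting $r\mapsto r\ntensor{A}\id$ as a split monomorphism out of $P\,A\,b$. Evaluating $\Phi$ on the equal classes $p_0\ntensor{A_0}\Sf(\alpha_0)=p_1\ntensor{A_1}\Sf(\alpha_1)$ then gives $p_0\act_P\alpha_0=p_1\act_P\alpha_1$, as required.

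The hard part is the first step: without the observation that non-singularity is preserved by each individual move, the ambient coend could in principle identify a non-singular class with a singular one, or glue non-singular classes in ways that $\Phi$ cannot detect, and then neither $x_1=\Sf(\alpha_1)$ nor the well-definedness of $\Phi$ would be available. Once this confinement is in place, the remaining verifications are routine functoriality calculations.
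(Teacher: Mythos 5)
Your proof is correct and takes essentially the same route as the paper's: both arguments work along the zigzag of generating steps of the coend equivalence relation, propagating the invariant that the morphism component is bijective on indices---hence of the form $\Sf(\alpha)$ by conservativity of the sum functor (Proposition~\ref{S_Faithful_And_Conservative}(\ref{S_Conservative}))---with the value $p\act_P\alpha$ held constant by faithfulness and functoriality. The only difference is presentational: the paper leaves the zigzag induction implicit in ``it is enough to establish the lemma in the following two cases'' (the lemma's conclusion re-establishes its hypothesis, so the one-step cases iterate), whereas you make that induction explicit through the non-singularity invariant and the descended function $\Phi$, with identical underlying computations.
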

\begin{proof}
It is enough to establish the lemma in the following two cases.
\begin{myitemize}
\item
When there exists $\alpha: A_1\rightarrow A_0$ in $\freesmc\scatA$ such that
$p_1\act_P\alpha = p_0$ and $\Sf(\alpha) \dcomp \Sf(\alpha_0) = x_1$. 
In which case, taking $\alpha_1=\alpha\dcomp\alpha_0$ we are done. 

\item
When there exists $\alpha: A_0\rightarrow A_1$ in $\freesmc\scatA$ such that
$p_0\act_P\alpha = p_1$ and $\Sf\alpha\dcomp x_1 = S(\alpha_0)$.
In which case, $x_1: \Sf(A_1)\rightarrow \Sf(A)$ in $\PSh\scatA$ is bijective on
indices and hence, by
Proposition~\ref{S_Faithful_And_Conservative}(\ref{S_Conservative}), there
exists $\alpha_1: A_1 \rightarrow A$ in $\freesmc\scatA$ such that
$\Sf(\alpha_1) = x_1$.  Further, by
Proposition~\ref{S_Faithful_And_Conservative}(\ref{S_Faithful}), we have that
$\alpha_0 = \alpha\dcomp\alpha_1$ and hence also that
$p_0\act_P\alpha_0 
   = p\act_P(\alpha\dcomp\alpha_1) 
   = (p\act_P\alpha)\act_P\alpha
   = p_1\act_P\alpha_1$. \qed
\end{myitemize}
\end{proof}
\begin{corollary}\label{p*id=p'*id_=>_p=p'}
For $P:\freesmc\scatA\rightarrow\PSh\scatB$, if 
$p\ntensor{A}\id_{\Sf A} = p'\ntensor{A}\id_{\Sf A}$ in 
$\lan P(\Sf A)(b)$ then $p = p'$ in $P\,A\,b$.
\end{corollary}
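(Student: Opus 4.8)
The plan is to reduce the statement to a single application of the Invariance Lemma (Lemma~\ref{Invariance_Lemma}) in the diagonal case where the distinguished morphism is an identity. The starting observation is that, since $\Sf$ is a functor, $\id_{\Sf A} = \Sf(\id_A)$; hence both classes occurring in the hypothesis already have the shape $p_0\ntensor{A_0}\Sf(\alpha_0)$ demanded by the lemma, namely with $A_0 = A$ and $\alpha_0 = \id_A$.

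Concretely, I would instantiate Lemma~\ref{Invariance_Lemma} by taking $\alpha_0 = \id_A : A\rightarrow A$ and $p_0 = p$, together with $A_1 = A$, $x_1 = \id_{\Sf A}$, and $p_1 = p'$. The given equality $p\ntensor{A}\id_{\Sf A} = p'\ntensor{A}\id_{\Sf A}$ is then precisely the equality $p_0\ntensor{A_0}\Sf(\alpha_0) = p_1\ntensor{A_1}x_1$ that serves as the hypothesis of the lemma. The lemma accordingly supplies a morphism $\alpha_1 : A\rightarrow A$ in $\freesmc\scatA$ with $\Sf(\alpha_1) = x_1 = \id_{\Sf A}$ and $p\act_P\id_A = p'\act_P\alpha_1$.

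It remains to extract $p = p'$ from these two conclusions. From $\Sf(\alpha_1) = \id_{\Sf A} = \Sf(\id_A)$ and the faithfulness of the sum functor (Proposition~\ref{S_Faithful_And_Conservative}(\ref{S_Faithful})) I obtain $\alpha_1 = \id_A$. Substituting into the second conclusion yields $p\act_P\id_A = p'\act_P\id_A$, and since $P$ is a functor the action of $\id_A$ is the identity on $P\,A\,b$, so $p = p\act_P\id_A = p'\act_P\id_A = p'$, as required. I do not expect a genuine obstacle here: the substance has been concentrated in the Invariance Lemma, and this corollary is simply its identity instance. The only points needing care are the two bookkeeping reductions — rewriting $\id_{\Sf A}$ as $\Sf(\id_A)$ so that the lemma is applicable, and invoking faithfulness to collapse $\alpha_1$ back to an identity.
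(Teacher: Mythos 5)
Your proof is correct and is exactly the argument the paper intends: the corollary is stated immediately after Lemma~\ref{Invariance_Lemma} as its specialisation at $\alpha_0=\id_A$, $x_1=\id_{\Sf A}=\Sf(\id_A)$, with faithfulness of the sum functor (Proposition~\ref{S_Faithful_And_Conservative}(\ref{S_Faithful})) collapsing the resulting $\alpha_1$ to $\id_A$. No gaps; your two bookkeeping steps are precisely what the paper leaves implicit.
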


\section{Coefficients functors}

Via the canonical natural isomorphisms
$$\begin{array}{rclcll}
\PSh\scatA\bighom{\Sf A,X}
& \!\iso\! & \textstyle
\prod_{i\in\card A}\PSh\scatA\bighom{\yon(A_i),X}
& \!\iso\! & \textstyle
\prod_{i\in\card A} X(A_i)
& \qquad(A\in\freesmc\scatA, X\in\PSh\scatA)
\end{array}$$
every analytic functor $F:\PSh\scatA\rightarrow\PSh\scatB$ admits a
\Rem{Taylor series development} as follows
\begin{equation}\label{TaylorSeriesDevelopment}
\begin{array}{rcll}
F\,X\,b 
& \iso & 
\Big(\sum_{n\in\Nat}\sum_{a_1,\ldots,a_n\in\scatA}
  P\big(\!\oplus_{i=1}^n\listof{a_i}\big)(b)
  \times 
  \prod_{i=1}^n X(a_i)
  \Big)_{\!\mbox{\large/}\approx}
  & 
\qquad
(X\in\PSh\scatA, b\in\scatB^\op)
\end{array}
\end{equation}
for some \Rem{coefficients functor} $P:\freesmc\scatA\rightarrow\PSh\scatB$
(referred to as an $(\scatA,\scatB)$-\Rem{species of structures}
in~\cite{FioreFOSSACS,Esp}).
%
The representation of analytic functors~(\ref{TaylorSeriesDevelopment}) for
$\scatA$ a finite discrete category and $\scatB$ the one-object category
directly exhibits them as the multivariate analytic functors
of A.\,Joyal~\cite[\S~1.1]{Joyal1234}.

\medskip
The 
coefficients functors of an analytic functor are unique up to isomorphism.   
\begin{proposition}\label{Uniqueness_Of_Coefficients}
The functor 
$\lan{(\fstarg)}:
\CAT\bighom{\freesmc\scatA,\PSh\scatB}\rightarrow\CAT\bighom{\PSh\scatA,\PSh\scatB}$
is conservative.  
That is, for $P, Q: \freesmc\scatA\rightarrow\PSh\scatB$, if  
$\lan P \iso \lan Q:\PSh\scatA\rightarrow\PSh\scatB$ then 
$P\iso Q$.
\end{proposition}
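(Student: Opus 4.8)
The plan is to reconstruct $P$ (up to isomorphism) as the ``principal part'' of $\lan P$ that sits inside $\lan P\circ\Sf_\scatA$ through the unit of the left Kan extension, and then to show that any isomorphism $\theta\colon\lan P\iso\lan Q$ carries the principal part of $\lan P$ isomorphically onto that of $\lan Q$. Concretely, for $A\in\freesmc\scatA$ and $b\in\scatB^\op$ write $\iota^P_{A}\colon P\,A\,b\rightarrow\lan P(\Sf A)(b)$ for the map $p\mapsto p\ntensor{A}\id_{\Sf A}$; by Corollary~\ref{p*id=p'*id_=>_p=p'} it is injective, and it is natural in $A$ and $b$, being the component at $A$ of the unit $\iota^P\colon P\Rightarrow\lan P\circ\Sf_\scatA$. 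The whole argument reduces to an intrinsic, $\theta$-invariant description of the image of $\iota^P_A$.

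For this I first record two features of an element $e\in\lan P(\Sf A)(b)$, written $e=p\ntensor{A_0}x$ with $x\colon\Sf A_0\rightarrow\Sf A$ of underlying function $\varphi\colon\card{A_0}\rightarrow\card A$. Call $e$ \emph{full} if it does not lie in the image of $\lan P(j)$ for any proper coproduct injection $j\colon\Sf A'\rightarrow\Sf A$ (with $\card{A'}<\card A$), and call $\card{A_0}$ the \emph{arity} of $e$; the latter is well defined because the identification~(\ref{Coend_Equivalence}) only relates representatives along morphisms of $\freesmc\scatA$, which preserve cardinality. If $\varphi$ is not surjective then $x$ factors through the injection $\Sf(A{\restriction}J)\rightarrow\Sf A$ determined by $J=\mathrm{im}\,\varphi\subsetneq\card A$, so $e$ is not full; conversely, Lemma~\ref{Invariance_Lemma} (with $\alpha_0=\id_A$) shows that $p\ntensor{A}\id_{\Sf A}$ is full. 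Hence a full element of arity $\card A$ has a representative whose $\varphi$ is surjective, necessarily bijective since $\card{A_0}=\card A$, so by Proposition~\ref{S_Faithful_And_Conservative}(\ref{S_Conservative}) its attached map is $\Sf\alpha_1$ and $e=(p\act_P\alpha_1)\ntensor{A}\id_{\Sf A}\in\mathrm{im}\,\iota^P_A$. Thus $\mathrm{im}\,\iota^P_A$ is exactly the set of full elements of arity $\card A$, and fullness is preserved by $\theta$: it is phrased through images of $\lan P$ at the fixed morphisms $j$, and naturality of $\theta$ at each such $j$ transports $\mathrm{im}\,\lan P(j)$ to $\mathrm{im}\,\lan Q(j)$.

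The crux is to prove that the \emph{arity} grading is likewise $\theta$-invariant. Since $\freesmc\scatA=\coprod_{n\in\Nat}[\freesmc\scatA]_n$ has no morphisms between objects of different cardinality, coends distribute over this decomposition and yield a splitting $\lan P\iso\coprod_{n\in\Nat}\lan{P^{(n)}}$ in which the $n$-th summand only involves the hom-sets $\PSh\scatA\hom{\Sf A_0,-}$ with $\card{A_0}=n$, i.e.\ an $n$-fold product of evaluation functors; so $\lan{P^{(n)}}$ is homogeneous of degree $n$, with $\lan{P^{(n)}}(0)=0$ for $n\geq1$ and $\lan{P^{(0)}}$ constant. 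I would then show that this homogeneous decomposition is determined by $\lan P$ as a functor, hence preserved by $\theta$: the degree-$0$ part is $\mathrm{im}\big(\lan P(0\rightarrow X)\big)$, and the higher components are recovered from the $n$-ary cross-effects of $\lan P$ on the copowers $X+\cdots+X$ together with their fold and injection maps, a polarisation argument whose vanishing above the degree pins down each summand. Establishing this degree-rigidity is the main obstacle; everything else is bookkeeping.

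Granting arity-invariance, $\mathrm{im}\,\iota^P_A=\{\text{full}\}\cap\{\text{arity }\card A\}$ is $\theta$-stable, so $\theta_{\Sf A}$ restricts to a bijection $\mathrm{im}\,\iota^P_A\iso\mathrm{im}\,\iota^Q_A$ and I define $\phi_A\eqdef(\iota^Q_A)^{-1}\circ\theta_{\Sf A}\circ\iota^P_A\colon P\,A\rightarrow Q\,A$. Naturality of $\phi$ in $A$ follows from naturality of the units, using the identity $\lan P(\Sf\alpha)\circ\iota^P_A=\iota^P_{A'}\circ P\alpha$ (the case $x=\id$ of~(\ref{Coend_Equivalence})) and its counterpart for $Q$, while naturality in $b$ is automatic since $\iota^P_A,\theta_{\Sf A},\iota^Q_A$ are maps of presheaves. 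Finally, the same construction applied to $\theta^{-1}$ yields $\psi\colon Q\Rightarrow P$, and since $\iota^P$ is monic with $\iota^P_A\circ\psi_A\circ\phi_A=\theta^{-1}_{\Sf A}\circ\theta_{\Sf A}\circ\iota^P_A=\iota^P_A$ (and symmetrically), we obtain $\psi\circ\phi=\id$ and $\phi\circ\psi=\id$. Hence $P\iso Q$.
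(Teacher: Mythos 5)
Your reduction is sound as far as it goes: the image of $\iota^P_A$ really is the set of full elements of arity $\card A$ (the fullness argument via Lemma~\ref{Invariance_Lemma} and Proposition~\ref{S_Faithful_And_Conservative}(\ref{S_Conservative}) is correct), and granting $\theta$-stability of that set, the construction of $\phi$ and $\psi$ and the check that they are mutually inverse is routine. But the proof has a genuine gap exactly where you flag it: the $\theta$-invariance of the arity grading \emph{is} the substance of the proposition, and you assert it rather than prove it. The point is that arity is defined through the coend presentation of $\lan P$, i.e.\ through $P$ itself, so its invariance under an abstract natural isomorphism is not formal; indeed it fails for general natural transformations --- the unique natural transformation from the identity functor on $\Set$ (analytic, coefficients concentrated in arity $1$) to the constant functor at $1$ (coefficients concentrated in arity $0$) sends arity-$1$ elements to an arity-$0$ element --- so any proof must genuinely exploit invertibility of $\theta$. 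The sentence about cross-effects and polarisation is a plan for such a proof, not a proof.

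The gap is fillable with tools you already deploy. For $n\geq 1$, writing $n\cdot X$ for the $n$-fold copower $X+\cdots+X$, let $\iota_{\hat\jmath}:(n-1)\cdot X\rightarrow n\cdot X$ be the $n$ injections omitting one summand and $\nabla_n:n\cdot X\rightarrow X$ the codiagonal; set $\mathrm{CR}_n(X)(b)\eqdef\lan P(n\cdot X)(b)\setminus\bigcup_{j}\mathrm{im}\,\lan P(\iota_{\hat\jmath})$ and $D_n(X)(b)\eqdef\lan P(\nabla_n)\big(\mathrm{CR}_n(X)(b)\big)$. Since representables are connected, each index of a representative $x:\Sf A_0\rightarrow n\cdot X$ lands in a unique summand, and the set of summands hit is a coend invariant (morphisms of $\freesmc\scatA$ are bijective on indices); from this one checks that $D_n$ is exactly the set of elements of arity $\geq n$, so the arity-$n$ elements are $D_n\setminus D_{n+1}$ (arity $0$ being $\mathrm{im}\,\lan P(0\rightarrow X)$, as you note). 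Each $D_n$ is built from images of $\lan P$ at specified maps together with set complements, and a natural \emph{isomorphism} transports these to the corresponding data for $\lan Q$ --- this is precisely where invertibility enters. That closes your argument. For comparison, the paper avoids degree-rigidity altogether: Proposition~\ref{Uniqueness_Of_Coefficients_Generalisation} shows that every quasi-cartesian $\varphi:\lan P\natrightarrow\lan Q$ equals $\lan\phi$ for a unique $\phi:P\natrightarrow Q$, the key step being that the quasi-pullback property of naturality squares forces $\varphi(p\ntensor{A}\id_{\Sf A})$ to again be of the form $q\ntensor{A}\id_{\Sf A}$; since a natural isomorphism is cartesian, hence quasi-cartesian, both $\theta$ and $\theta^{-1}$ lift, and faithfulness of $\lan{(\fstarg)}$ makes the lifts mutually inverse. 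The paper's route also delivers the stronger statement needed later (Corollary~\ref{lan_Full_And_Faithful}); yours, once the rigidity lemma is actually proved, is self-contained but yields only conservativity.
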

This result is a corollary of
Proposition~\ref{Uniqueness_Of_Coefficients_Generalisation} below, for which
we need to recall that a natural transformation is said to be
\Def{quasi-cartesian} whenever all its naturality squares are quasi-pullbacks,
where a \Def{quasi-pullback} is a commutative square for which the unique
mediating morphism from its span to the pullback of its cospan is an
epimorphism. 

The notion of quasi-pullback in presheaf categories is given pointwise.
\begin{lemma}
For a small category $\scatC$, a commutative square in $\PSh\scatC$ as on the
left below
\begin{center}
\mbox{}\hfill\hfill
$\xymatrix{
  \ar[d]_-h Q\ar[r]^-k & Y \ar[d]^-g
  \\
  X \ar[r]_f & Z
}$
\hfill\hfill
$\xymatrix{
  \ar[d]_-{h_c} Q\,c\ar[r]^-{k_c} & Y\,c \ar[d]^-{g_c}
  \\
  X\,c \ar[r]_{f_c} & Z\,c
}$
\hfill\hfill\mbox{}
\end{center}
is a quasi-pullback iff so are the commutative squares in $\Set$ as on the
right above for every $c\in\scatC$.
\end{lemma}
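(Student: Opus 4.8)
The plan is to reduce the statement to two standard facts about the presheaf category $\PSh\scatC = \Set^{\scatC^\op}$: first, that limits are computed pointwise, so that the pullback $P$ of the cospan formed by $f$ and $g$ satisfies $P\,c = X\,c \times_{Z\,c} Y\,c$ for every $c\in\scatC$, with projections given pointwise; and second, that a natural transformation in $\PSh\scatC$ is an epimorphism if and only if each of its components is a surjection in $\Set$ (epimorphisms, like all colimits, being detected pointwise, for instance via cokernel pairs). Granting these, the lemma becomes essentially a bookkeeping exercise.

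First I would fix notation, writing $P$ for the pullback of $f$ and $g$ in $\PSh\scatC$, with projections $p : P \to X$ and $q : P \to Y$, and letting $m : Q \to P$ be the unique mediating morphism with $p \op m = h$ and $q \op m = k$. By definition, the square on the left is a quasi-pullback exactly when $m$ is an epimorphism in $\PSh\scatC$; likewise, the square at $c$ is a quasi-pullback exactly when its mediating morphism $m'_c : Q\,c \to P\,c$ in $\Set$ is surjective.

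The crux is to identify the component $m_c$ of $m$ with $m'_c$. Because $P$ is the pointwise pullback, its projections have components $p_c$ and $q_c$ that are precisely the projections of $X\,c \times_{Z\,c} Y\,c$. Evaluating the defining equations $p \op m = h$ and $q \op m = k$ at $c$ then yields $p_c \op m_c = h_c$ and $q_c \op m_c = k_c$, which are exactly the equations characterising the mediating morphism of the pointwise square; by uniqueness of mediating morphisms in $\Set$ this forces $m_c = m'_c$. This identification is the only step demanding genuine, if routine, care, and it rests entirely on the coincidence of the presheaf pullback with the pointwise pullback.

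With $m_c = m'_c$ established, the equivalence is immediate and symmetric in both directions: $m$ is an epimorphism in $\PSh\scatC$ if and only if every component $m_c = m'_c$ is surjective in $\Set$, that is, if and only if every pointwise square at $c\in\scatC$ is a quasi-pullback. I anticipate no real obstacle beyond invoking the two pointwise facts correctly and carrying out the component identification.
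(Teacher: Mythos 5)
Your proof is correct and follows essentially the same route as the paper, whose proof is a one-line appeal to exactly the two facts you invoke: that limits and colimits in presheaf categories are computed pointwise and that evaluation at each object preserves them. Your write-up merely makes explicit the identification of the mediating morphism's components with the pointwise mediating morphisms, which the paper leaves implicit.
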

\begin{proof}
Follows from the facts that in presheaf categories limits and colimits are
given pointwise and that the functors that evaluate presheaves at an object
preserve them. \qed
\end{proof}

\begin{proposition}\label{Uniqueness_Of_Coefficients_Generalisation}
Let $P, Q: \freesmc\scatA\rightarrow\PSh\scatB$ 
and 
$\varphi:\lan P\natrightarrow\lan Q:\PSh\scatA\rightarrow\PSh\scatB$. 
For the following statements:
\begin{enumerate}[(i)]
\item \label{qc_hyp}
  The natural transformation $\varphi$ is quasi-cartesian.

\item \label{simple_qc_char}
  For every $A\in\freesmc\scatA$, $b\in\scatB^\op$, and $p\in P\,A\,b$ there
  exists (a necessarily unique) $q\in Q\,A\,b$ such that
  $\varphi\big(p\ntensor{A}\id_{\Sf A}\big) 
     = q\ntensor{A}\id_{\Sf A}$. 

\item \label{qc_lift}
  There exists a (necessarily unique) natural transformation
  $\phi:P\natrightarrow Q:\freesmc\scatA\rightarrow\PSh\scatB$ such
  that $\varphi = \lan\phi$. 
\end{enumerate}
we have 
that~$\emph{(\ref{qc_hyp})}\Rightarrow\emph{(\ref{simple_qc_char})}\Rightarrow\emph{(\ref{qc_lift})}$.
\end{proposition}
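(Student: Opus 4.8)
The plan is to prove the two implications separately, treating $(\ref{qc_hyp})\Rightarrow(\ref{simple_qc_char})$ as the substantial step and $(\ref{simple_qc_char})\Rightarrow(\ref{qc_lift})$ as an assembly argument. Throughout I fix $b\in\scatB^\op$ and argue pointwise, writing $\indmap x$ for the underlying function of a morphism $x$ between sums (functorial by~(\ref{PShC[SA,SB]})), and using freely that every morphism of $\freesmc\scatA$ has a bijective index part, so that $\Sf\gamma$ is always bijective on indices. For $(\ref{qc_hyp})\Rightarrow(\ref{simple_qc_char})$ the uniqueness of $q$ is immediate from Corollary~\ref{p*id=p'*id_=>_p=p'}, so the content is existence.

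For existence, pick any representation $\varphi(p\ntensor{A}\id_{\Sf A}) = q'\ntensor{A'}y$ with $y:\Sf A'\rightarrow\Sf A$. Since $q'\ntensor{A'}y = \lan Q(y)(q'\ntensor{A'}\id_{\Sf A'})$, the elements $p\ntensor{A}\id_{\Sf A}$ and $q'\ntensor{A'}\id_{\Sf A'}$ form a compatible pair over the cospan given by $\varphi_{\Sf A}$ and $\lan Q(y)$; applying the quasi-pullback hypothesis to the naturality square of $\varphi$ at $y$ yields $\zeta = r\ntensor{C}v\in\lan P(\Sf A')(b)$ with $\lan P(y)(\zeta) = p\ntensor{A}\id_{\Sf A}$ and $\varphi(\zeta) = q'\ntensor{A'}\id_{\Sf A'}$. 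The crux is to deduce that $\indmap y$ is a bijection from this single lift, by two applications of Lemma~\ref{Invariance_Lemma}. First, $\lan P(y)(\zeta) = r\ntensor{C}(v\dcomp y) = p\ntensor{A}\Sf(\id_A)$, so Lemma~\ref{Invariance_Lemma} forces $v\dcomp y = \Sf\alpha$ for some $\alpha$, whence $\indmap v\dcomp\indmap y$ is a bijection and in particular $\indmap v$ is injective. Second, naturality of $\varphi$ at $v$ gives $q'\ntensor{A'}\id_{\Sf A'} = \varphi(\zeta) = \lan Q(v)\big(\varphi(r\ntensor{C}\id_{\Sf C})\big)$; representing $\varphi(r\ntensor{C}\id_{\Sf C}) = s\ntensor{D}u$ and reading the identity $s\ntensor{D}(u\dcomp v) = q'\ntensor{A'}\Sf(\id_{A'})$ through Lemma~\ref{Invariance_Lemma} forces $u\dcomp v = \Sf\alpha'$, so $\indmap u\dcomp\indmap v$ is a bijection and $\indmap v$ is surjective. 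Thus $\indmap v$ is a bijection, and therefore so is $\indmap y$. Being bijective on indices, $y = \Sf\alpha''$ for a unique $\alpha'':A'\rightarrow A$ by Proposition~\ref{S_Faithful_And_Conservative}(\ref{S_Conservative}), and then $\varphi(p\ntensor{A}\id_{\Sf A}) = q'\ntensor{A'}\Sf\alpha'' = (q'\act_Q\alpha'')\ntensor{A}\id_{\Sf A}$, so $q\eqdef q'\act_Q\alpha''$ works.

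For $(\ref{simple_qc_char})\Rightarrow(\ref{qc_lift})$, I define $\phi$ pointwise by $(\phi_A)_b(p)\eqdef q$, the unique element provided by~(\ref{simple_qc_char}). Naturality of $\phi$ in $\scatB$ follows by applying the morphism $\varphi_{\Sf A}$ of $\PSh\scatB$ to the identity $\beta\act_{\lan P}(p\ntensor{A}\id_{\Sf A}) = (\beta\act_P p)\ntensor{A}\id_{\Sf A}$, and naturality in $\freesmc\scatA$ follows by applying $\varphi$ to $\lan P(\Sf\gamma)(p\ntensor{A}\id_{\Sf A}) = (p\act_P\gamma)\ntensor{A'}\id_{\Sf A'}$ together with the naturality square at $\Sf\gamma$; in each case one obtains an equality of elements of the form $(\,-\,)\ntensor{A}\id_{\Sf A}$ to which Corollary~\ref{p*id=p'*id_=>_p=p'} applies. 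That $\lan\phi = \varphi$ holds on the elements $p\ntensor{A}\id_{\Sf A}$ by construction, and it extends to arbitrary $p\ntensor{A}x = \lan P(x)(p\ntensor{A}\id_{\Sf A})$ because both $\lan\phi$ and $\varphi$ are natural. Uniqueness of $\phi$ is precisely the faithfulness of $\lan{(\fstarg)}$ proved above.

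I expect the main obstacle to be the index-bijectivity argument of the first implication. Everything else is bookkeeping, but there one must extract genuine combinatorial information—that $\varphi$ neither lowers nor raises the index of a generic element—from a single use of the quasi-pullback property; this is exactly what the two-fold application of Lemma~\ref{Invariance_Lemma} (one forcing $\indmap v$ injective, the other $\indmap v$ surjective) is designed to achieve.
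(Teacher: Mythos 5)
Your proof is correct and takes essentially the same route as the paper's: the lift $\zeta = r\ntensor{C}v$ from the quasi-pullback at the naturality square of the representing morphism, followed by two applications of Lemma~\ref{Invariance_Lemma} (one on $v\dcomp y$ giving injectivity of $\indmap{v}$, one via naturality at $v$ giving surjectivity), is exactly the paper's argument with $(A_0,p_0,s_0)$ renamed to $(C,r,v)$. Your assembly of (\ref{simple_qc_char})$\Rightarrow$(\ref{qc_lift})---pointwise definition, naturality checked through Corollary~\ref{p*id=p'*id_=>_p=p'}, extension to all $p\ntensor{A}x$ by naturality, and uniqueness from faithfulness of $\lan{(\fstarg)}$---also matches the paper's (largely elided) verification.
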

\begin{proof}
$\mbox{(\ref{qc_hyp})}\Rightarrow\mbox{(\ref{simple_qc_char})}$
For $p\in P\,A\,b$, let 
$$\begin{array}{rcl}
\varphi_{\Sf A,b}(p\ntensor{A}\id_{\Sf A}) 
& = & 
(q\ntensor{A'}\id_{\Sf A'}) \act_{\lan P} s
\end{array}$$
for $q\in Q(A')(b)$ and $s:\Sf A'\rightarrow\Sf A$ in $\PSh\scatA$.

Since $\varphi$ is quasi-cartesian, there exists
$(p_0\ntensor{A_0}s_0) \in \lan P(\Sf A')(b)$
such that 
\begin{eqnarray}\label{p0*(s0.s)=p*id}
p_0\ntensor{A_0}(s_0\dcomp s) & = & p\ntensor{A}\id_{\Sf A}
\end{eqnarray}
and 
$$\begin{array}{rcl}
\varphi_{\Sf A',b}(p_0\ntensor{A_0}s_0) & = & q\ntensor{A'}\id_{\Sf A'}
\end{array}$$

From~(\ref{p0*(s0.s)=p*id}), by Lemma~\ref{Invariance_Lemma}, there
exists $\alpha_0: A_0\rightarrow A$ in $\freesmc\scatA$ such that
\begin{eqnarray}\label{s0.s=S(alpha)}
s_0\dcomp s & = & \Sf(\alpha_0)
\end{eqnarray}
and $p_0\act_P \alpha_0 = p$.  In particular, thus, 
$s_0: \Sf(A_0)\rightarrow \Sf(A')$ in $\PSh\scatA$ is injective on
indices. 

Now, let 
$$\begin{array}{rcl}
\varphi_{\Sf(A_0),b}(p_0\ntensor{A_0}\id_{\Sf(A_0)})
& = & 
q_1\ntensor{A_1}s_1
\end{array}$$
for $q_1\in Q(A_1)(b)$ and $s_1:\Sf(A_1)\rightarrow\Sf(A_0)$ in
$\PSh\scatA$.  By naturality of $\varphi$, we have that 
$$\begin{array}{rcl}
q_1\ntensor{A_1}(s_1\dcomp s_0) & = & q\ntensor{A'}\id_{\Sf(A')}
\end{array}$$
and, by Lemma~\ref{Invariance_Lemma}, that there exists
$\alpha_1: A_1\rightarrow A'$ in $\freesmc\scatA$ such that 
$s_1\dcomp s_0 = \Sf(\alpha_1)$ and $q_1\act_Q\alpha_1 = q$.  In
particular, thus, $s_0$ is surjective, and hence bijective, on
indices.  It then follows from~(\ref{s0.s=S(alpha)}) that also $s$ is
bijective on indices and hence that there exists 
$\alpha: A' \rightarrow A$ such that $\Sf\alpha = s$.

Thus, 
$\varphi_{\Sf A,b}(p\ntensor{A}\id_{\Sf A}) 
 =  
(q\act_Q \alpha)\ntensor{A}\id_{\Sf A}$.

\medskip
$\mbox{(\ref{simple_qc_char})}\Rightarrow\mbox{(\ref{qc_lift})}$
The family of mappings 
$\phi_{A,b}: P\,A\,b \rightarrow Q\,A\,b$~($A\in\freesmc\scatA,
b\in\scatB^\op$) 
associating $p\in P(A)(b)$ with the unique $q\in Q(A)(b)$ such that 
$\varphi_{\Sf A,b}\big(p\ntensor{A}\id_{SA}\big) = (q\ntensor{A}\id_{\Sf A})$
determine a natural transformation $\phi: P \natrightarrow Q$ with the
desired property. \qed  
\hide{
Indeed, for $p\in P(A)(b)$, $\alpha: A \rightarrow A'$ in
$\freesmc\scatA$ and $\beta: b' \rightarrow b$ in $\scatB$, 
we have that
$$\begin{array}{rclcl}
\phi_{A',b}(p\act_P\alpha)\ntensor{A'}\id_{\Sf(A')}
& = & \varphi_{\Sf(A'),b}\big((p\act_P\alpha)\ntensor{A'}\id_{\Sf(A')}\big)
& = & \varphi_{\Sf(A'),b}(p\ntensor{A}\Sf\alpha)
\\[2mm]
& = & \varphi_{\Sf(A'),b}\big((p\ntensor{A'}\id_{\Sf(A')})\act_{\lan P}\Sf\alpha\big)
& = & \varphi_{\Sf(A'),b}(p\ntensor{A'}\id_{\Sf(A')})\act_{\lan Q}\Sf\alpha
\\[2mm]
& = & \big(\phi_{A',b}(p)\ntensor{A'}\id_{\Sf(A')}\big)\act_{\lan Q}\Sf\alpha
& = & \phi_{A',b}(p)\ntensor{A'}\Sf\alpha
\\[2mm]
& = & (\phi_{A',b}(p)\act_Q\alpha)\ntensor{A'}\id_{\Sf(A')}
\end{array}$$
and 
$$\begin{array}{rclcl}
\phi_{A,b'}(\beta\act_P p)\ntensor{A} \id_{\Sf A}
& = & \varphi_{\Sf A,b'}\big(\beta\act_P p\ntensor{A}\id_{\Sf A}\big)
& = & \varphi_{\Sf A,b'}\big(\beta \act_{\lan P}(p\ntensor{A}\id_{\Sf A})\big)
\\[2mm]
& = & \beta \act_{\lan Q} \varphi_{\Sf A,b}(p\ntensor{A}\id_{\Sf A})
& = & \beta \act_{\lan Q} \big( \phi_{A,b}(p)\ntensor{A}\id_{\Sf A}\big)
\\[2mm]
& = & \big(\beta \act_Q \phi_{A,b}(p)\big)\ntensor{A}\id_{\Sf A}
\end{array}$$
from which it follows, by Corollary~\ref{p*id=p'*id_=>_p=p'}, that
$$\begin{array}{rcl}
\phi_{A',b'}(\beta\act_P p\act_P \alpha)
& = & 
\beta\act_Q \phi_{A,b}(p)\act_Q\alpha
\end{array}$$
whilst, for $x: \Sf A\rightarrow X$ in $\PSh\scatA$, we also have
$$\begin{array}{rclcl}
{(\lan\phi)}_{X,b}(p\ntensor{A}x)
& = & \phi_{A,b}(p)\ntensor{A}x
& = & (\phi_{A,b}(p)\ntensor{A}\id_{\Sf A})\act_{\lan Q} x
\\[2mm]
& = & \varphi_{\Sf A,b}(p\ntensor{A}\id_{\Sf A})\act_{\lan Q} x
& = & \varphi_{X,b}(p\ntensor{A}x)
\end{array}$$
}
\end{proof}

\smallskip
It is interesting to note that not every natural transformation 
in the image of 
${\lan{(\fstarg)}:
   \CAT\bighom{\freesmc\scatA,\PSh\scatB}
   \rightarrow
   \CAT\bighom{\PSh\scatA,\PSh\scatB}}$
is quasi-cartesian.  
Indeed, for 
${\Scat \eqdef \big(\bot\rightarrow\top\big)}$,
$P\eqdef\freesmc\Scat\bighom{\listof{\top},\fstarg}$,
and 
$\phi: P\natrightarrow 1: \freesmc\Scat\rightarrow\Set$, 
the naturality square associated to
$\lan\phi: \lan P \natrightarrow \lan 1: \PSh\Scat\rightarrow\Set$
induced by $\yon(\bot)\rightarrow \yon(\top)$ in
$\PSh\Scat$ 
is not a quasi-pullback.  However, we have the following result.
\begin{proposition}
For $\phi: P \natrightarrow Q:\freesmc\gpdG\rightarrow\PSh\scatC$ where
$\gpdG$ is a small groupoid, the natural transformation 
$\lan\phi: \lan P \natrightarrow \lan Q:\PSh\gpdG\rightarrow\PSh\scatC$
is quasi-cartesian.
\end{proposition}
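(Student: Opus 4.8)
The plan is to verify the quasi-pullback condition pointwise, exploiting that the free symmetric monoidal category on a groupoid is again a groupoid. By the preceding lemma, quasi-pullbacks in $\PSh\scatC$ are detected pointwise, so it suffices to fix $f:X\rightarrow X'$ in $\PSh\gpdG$ and $b\in\scatC^\op$ and show that the naturality square of $\lan\phi$ at $f$, evaluated at $b$, is a quasi-pullback in $\Set$; equivalently, that the mediating map $\zeta\mapsto\big((\lan\phi)_X(\zeta),\lan P\,f(\zeta)\big)$ from $\lan P\,X\,b$ to the pullback of $\lan Q\,f$ along $(\lan\phi)_{X'}$ is surjective.

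First I would record the key structural fact: since $\gpdG$ is a groupoid, so is $\freesmc\gpdG$, because in a morphism $\gamma=(\indmap\gamma,\objmap\gamma)$ the index part $\indmap\gamma$ is a permutation and each component of $\objmap\gamma$ is invertible. I would then upgrade the coend equivalence to a single invertible step: for representatives $(A,p,x)$ and $(A',p',x')$, one has $p\ntensor{A}x = p'\ntensor{A'}x'$ in $\lan P\,X\,b$ if and only if there is an isomorphism $\alpha:A\rightarrow A'$ in $\freesmc\gpdG$ with $p\act_P\alpha = p'$ and $x = \Sf(\alpha)\dcomp x'$. The right-to-left direction is exactly~(\ref{Coend_Equivalence}); the content is the converse, which follows once one observes that this single-step relation is reflexive, symmetric (invert $\alpha$), and transitive, and therefore already coincides with the equivalence relation generated by~(\ref{Coend_Equivalence}). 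I expect this collapse of the coend identifications to a single isomorphism to be the crux of the proof; it is precisely where the groupoid hypothesis is indispensable.

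Granting this, the construction of preimages is a short calculation. Represent a compatible pair as $\xi=q\ntensor{A}y\in\lan Q\,X\,b$ and $\eta=p'\ntensor{A'}x'\in\lan P\,X'\,b$ with $\lan Q\,f(\xi)=(\lan\phi)_{X'}(\eta)$, that is, $q\ntensor{A}(y\dcomp f)=\phi_{A'}(p')\ntensor{A'}x'$ in $\lan Q\,X'\,b$. By the sharpened description there is an isomorphism $\alpha:A\rightarrow A'$ with $q\act_Q\alpha=\phi_{A'}(p')$ and $y\dcomp f=\Sf(\alpha)\dcomp x'$. Set $\zeta\eqdef(p'\act_P\alpha^{-1})\ntensor{A}y$. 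Naturality of $\phi$ at $\alpha^{-1}$ gives $\phi_A(p'\act_P\alpha^{-1})=\phi_{A'}(p')\act_Q\alpha^{-1}=(q\act_Q\alpha)\act_Q\alpha^{-1}=q$, whence $(\lan\phi)_X(\zeta)=q\ntensor{A}y=\xi$; and, substituting $y\dcomp f=\Sf(\alpha)\dcomp x'$ and applying~(\ref{Coend_Equivalence}) along $\alpha$, $\lan P\,f(\zeta)=(p'\act_P\alpha^{-1})\ntensor{A}(\Sf(\alpha)\dcomp x')=\big(p'\act_P(\alpha^{-1}\dcomp\alpha)\big)\ntensor{A'}x'=p'\ntensor{A'}x'=\eta$. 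Thus $\zeta$ is the desired preimage; the mediating map is surjective, so the square is a quasi-pullback, and since $f$ and $b$ were arbitrary, $\lan\phi$ is quasi-cartesian.
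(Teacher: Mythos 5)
Your proof is correct and is essentially the paper's own argument: both hinge on the observation that, since $\freesmc\gpdG$ is a groupoid, equality in the coend collapses to a single connecting isomorphism, along which the lift is then constructed by transporting the $P$-coefficient. The only difference is presentational: you justify that collapse explicitly (the paper simply asserts it with ``as $\gpdG$ is a groupoid, there exists $\sigma$\dots''), and your lift $(p'\act_P\alpha^{-1})\ntensor{A}y$ coincides, up to the coend identification, with the paper's $(p\act_P\sigma)\ntensor{G'}x$.
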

\begin{proof}
For $f: X \rightarrow Y$ in $\PSh\gpdG$, let 
$(p\ntensor{G}y)\in \lan P\,Y\,b$ and $(q\ntensor{G'}x)\in \lan Q\,X\,b$ be such
that 
$$\begin{array}{rclclcl}
{\phi_{G,b}(p)\ntensor{G}y} 
\ = \ 
\lan\phi_{Y,b}(p\ntensor{G}y) 
\ = \ 
(q\ntensor{G'}x)\act_{\lan Q}f
\ = \ 
q\ntensor{G'}(x\dcomp f)
\end{array}$$
Then, as $\gpdG$ is a groupoid, it follows that there exists 
$\sigma: G \rightarrow G'$ in $\freesmc\gpdG$ such that
$\phi_{G,b}(p)\act_Q\sigma = q$ and $y = \Sf(\sigma)\dcomp x\dcomp f$.

Since, for 
$p \ntensor{G} (\Sf\sigma\dcomp x) = (p\act_P\sigma)\ntensor{G'}x$ in
$\lan P\,X\,b$ we have that 
$(p \ntensor{G} (\Sf\sigma\dcomp x))\act_{\lan P}f 
   = {p \ntensor{G} (\Sf\sigma\dcomp x\dcomp f)}
   = p \ntensor{G} y$
and 
$\lan\phi_{X,b}\big((p\act_P\sigma)\ntensor{G'}x\big)
   = \big(\phi_{G',b}(p\act_P\sigma)\big)\ntensor{G'}x
   = \big(\phi_{G,b}(p)\act_P\sigma\big)\ntensor{G'}x
   = q\ntensor{G'}x$
we are done. \qed
\end{proof}

Quasi-cartesian natural transformations are closed under vertical composition
and we are naturally led to introduce the following.
\begin{definition}
For small categories $\scatA$ and $\scat B$, we let
$\AF\hom{\scatA,\scatB}$ be the subcategory of
$\CAT\bighom{\PSh\scatA,\PSh\scatB}$ consisting of analytic functors and
quasi-cartesian natural transformations between them.  
\end{definition}

\begin{corollary}\label{lan_Full_And_Faithful}
For $\gpdG$ a small groupoid, 
the functor
$\lan{(\fstarg)}:
   \CAT\bighom{\freesmc\gpdG,\PSh\scatC}
   \rightarrow
   \CAT\bighom{\PSh\gpdG,\PSh\scatC}$
restricts to an essentially surjective, full and faithful functor
\begin{equation}\label{EssSurFullFaithLan}
\lan{(\fstarg)}:
   \CAT\bighom{\freesmc\gpdG,\PSh\scatC}
   \rightarrow
   \AF\bighom{\gpdG,\scatC}
\end{equation}
\end{corollary}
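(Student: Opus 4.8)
The plan is to assemble the corollary from the results already established, since essentially all the substantive work has been done. First I would check that the displayed functor~(\ref{EssSurFullFaithLan}) is well defined, \ie that $\lan{(\fstarg)}$ genuinely corestricts from $\CAT\bighom{\PSh\gpdG,\PSh\scatC}$ to the subcategory $\AF\bighom{\gpdG,\scatC}$. On objects this is immediate: each $\lan P$ is analytic by the very definition of analytic functor. On morphisms it is exactly the content of the Proposition above showing that, for $\gpdG$ a groupoid, every $\lan\phi$ is quasi-cartesian; hence $\lan{(\fstarg)}$ carries each $\phi:P\natrightarrow Q$ to a morphism of $\AF\bighom{\gpdG,\scatC}$.

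Faithfulness is then inherited directly: since $\lan{(\fstarg)}$ into $\CAT\bighom{\PSh\gpdG,\PSh\scatC}$ is faithful and corestricting the codomain to a subcategory cannot enlarge hom-sets, the corestricted functor~(\ref{EssSurFullFaithLan}) remains faithful.

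For fullness, I would take $P,Q:\freesmc\gpdG\rightarrow\PSh\scatC$ together with an arbitrary morphism $\varphi:\lan P\natrightarrow\lan Q$ of $\AF\bighom{\gpdG,\scatC}$, \ie a quasi-cartesian natural transformation. Applying the implication $\mbox{(\ref{qc_hyp})}\Rightarrow\mbox{(\ref{qc_lift})}$ of Proposition~\ref{Uniqueness_Of_Coefficients_Generalisation} yields a (unique) $\phi:P\natrightarrow Q$ with $\lan\phi=\varphi$. This exhibits $\lan{(\fstarg)}$ as surjective on hom-sets, hence full; combined with the preceding paragraph it is then full and faithful.

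Essential surjectivity is the one place where a small independent verification is needed. Every object of $\AF\bighom{\gpdG,\scatC}$ is by definition an analytic functor $F:\PSh\gpdG\rightarrow\PSh\scatC$, and analyticity supplies a natural isomorphism $F\iso\lan P$ for some coefficients functor $P$. The only point to confirm is that this isomorphism is a morphism of $\AF\bighom{\gpdG,\scatC}$, \ie that it is quasi-cartesian: but each naturality square of a natural isomorphism has two parallel invertible edges, and such a square is readily a quasi-pullback, since the mediating map into the pullback of its cospan is itself an isomorphism and in particular an epimorphism (the same applies to the inverse). Hence $F$ lies in the essential image, and the three properties together complete the proof. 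The main obstacle here is conceptual rather than technical: the real content is entirely front-loaded into Proposition~\ref{Uniqueness_Of_Coefficients_Generalisation} (for fullness) and into the groupoid hypothesis (ensuring the corestriction to $\AF$ is legitimate), leaving only the routine observation that isomorphisms are quasi-cartesian.
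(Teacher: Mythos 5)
Your proposal is correct and follows exactly the assembly the paper intends for this corollary: well-definedness on morphisms from the preceding proposition that $\lan\phi$ is quasi-cartesian when the domain is a groupoid, faithfulness inherited from the earlier faithfulness proposition, fullness from the implication (i)$\Rightarrow$(iii) of Proposition~\ref{Uniqueness_Of_Coefficients_Generalisation}, and essential surjectivity from the definition of analytic functor together with the (correctly verified) observation that natural isomorphisms and their inverses are quasi-cartesian, hence isomorphisms in $\AF\bighom{\gpdG,\scatC}$.
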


\section{Generic coefficients functor}

We proceed to construct a quasi-inverse to~(\ref{EssSurFullFaithLan}) when the
small category $\scatC$ 
is a groupoid.
The central notion isolated by A.\,Joyal for this purpose is that of generic
element~\cite[Appendice, D\'efinition~2]{Joyal1234}.
\begin{definition}
For $F:\PSh\scatA\rightarrow\PSh\scatB$, we say that $x\in F\,X\,b$ is
\Def{generic} if for every cospan $f: X\rightarrow Z \leftarrow Y: g$ in
$\PSh\scatA$ and $y\in F\,Y\,b$ such that $x\act_F f = y\act_F g$ there exists
$h: X\rightarrow Y$ in $\PSh\scatA$ such that $f = h\dcomp g$ and $x\act_F h =
y$. 
\end{definition}
For instance, it follows from the proposition below that for
$P:\freesmc\gpdG\rightarrow\PSh{\scatC}$ with $\scat G$ a small groupoid,
$G\in\freesmc\gpdG$, and $c\in\scatC$, the generic elements in 
$\lan P(\Sf G)(c)$ are of the form $p\ntensor G\id_{\Sf G}$ for $p\in
P\,G\,c$.

\begin{proposition}
For $P:\freesmc\gpdG\rightarrow\PSh\scatC$ with $\gpdG$ a small groupoid, 
$(p\ntensor{G}x)\in\lan P\,X\,c$ is generic iff 
${x:\Sf G\rightarrow X}$ in $\PSh\gpdG$ is an isomorphism.
\end{proposition}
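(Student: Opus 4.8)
The plan is to prove the two implications separately, with the groupoid hypothesis entering through the observation that $\freesmc\gpdG$ is itself a groupoid: every morphism $(\indmap\gamma,\objmap\gamma)$ has $\indmap\gamma$ a permutation and $\objmap\gamma$ pointwise invertible, so it is invertible. Consequently the equivalence relation~(\ref{Coend_Equivalence}) collapses, and I would first record as a preliminary that, for $y:\Sf G\rightarrow Z$ and $z:\Sf{G'}\rightarrow Z$ in $\PSh\gpdG$, one has $p\ntensor{G}y = q\ntensor{G'}z$ in $\lan P\,Z\,c$ if and only if there is an isomorphism $\sigma:G\rightarrow G'$ in $\freesmc\gpdG$ with $q = p\act_P\sigma$ and $y = \Sf(\sigma)\dcomp z$. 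This is the same groupoid computation already used to prove that $\lan\phi$ is quasi-cartesian; the only point to check is that the relation it describes is already reflexive, symmetric and transitive (via $\id$, $\sigma^{-1}$ and composition), so that the generated relation~(\ref{Coend_Equivalence}) indeed collapses to a single step. This criterion is the workhorse of both directions.

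For the $(\Leftarrow)$ direction, I would assume $x:\Sf G\rightarrow X$ is an isomorphism and verify genericity directly. Given a cospan $f:X\rightarrow Z\leftarrow Y:g$ and an element $w = p'\ntensor{G'}u\in\lan P\,Y\,c$ (with $u:\Sf{G'}\rightarrow Y$) satisfying $(p\ntensor{G}x)\act_{\lan P}f = w\act_{\lan P}g$, unfolding both sides gives $p\ntensor{G}(x\dcomp f) = p'\ntensor{G'}(u\dcomp g)$ in $\lan P\,Z\,c$. The preliminary then supplies $\sigma:G\rightarrow G'$ with $p' = p\act_P\sigma$ and $x\dcomp f = \Sf(\sigma)\dcomp u\dcomp g$. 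Setting $h\eqdef x^{-1}\dcomp\Sf(\sigma)\dcomp u:X\rightarrow Y$ and using invertibility of $x$, I would check $h\dcomp g = x^{-1}\dcomp(x\dcomp f) = f$ and $x\dcomp h = \Sf(\sigma)\dcomp u$, whence $(p\ntensor{G}x)\act_{\lan P}h = p\ntensor{G}(\Sf(\sigma)\dcomp u) = (p\act_P\sigma)\ntensor{G'}u = w$, as required. (Alternatively one may first reduce to the case $x=\id_{\Sf G}$, since genericity is stable under the action of an isomorphism.)

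For the $(\Rightarrow)$ direction, I would assume $p\ntensor{G}x$ is generic and feed genericity the cospan $\id_X:X\rightarrow X\leftarrow\Sf G:x$ together with $y\eqdef p\ntensor{G}\id_{\Sf G}\in\lan P(\Sf G)(c)$. Since $y\act_{\lan P}x = p\ntensor{G}x = (p\ntensor{G}x)\act_{\lan P}\id_X$, genericity yields $h:X\rightarrow\Sf G$ with $h\dcomp x = \id_X$ (so $x$ is a split epimorphism) and $p\ntensor{G}(x\dcomp h) = p\ntensor{G}\id_{\Sf G}$. The latter equality lives in $\lan P(\Sf G)(c)$ with right-hand representative $p\ntensor{G}\Sf(\id_G)$, so Lemma~\ref{Invariance_Lemma} applies and produces $\sigma:G\rightarrow G$ with $x\dcomp h = \Sf(\sigma)$; as $\freesmc\gpdG$ is a groupoid, $\sigma$ is invertible and hence $x\dcomp h = \Sf(\sigma)$ is an isomorphism. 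Then $x\dcomp\big(h\dcomp\Sf(\sigma)^{-1}\big) = \id_{\Sf G}$ exhibits $x$ as a split monomorphism as well, and a morphism that is both split epi and split mono is an isomorphism (its two one-sided inverses coincide), so $x$ is an isomorphism.

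I expect the $(\Rightarrow)$ direction to be the main obstacle, and within it the decisive move is the choice of the cospan $(\id_X,x)$: this is precisely what turns the abstract genericity hypothesis into the concrete split-epi datum $h\dcomp x = \id_X$ and the endomorphism $x\dcomp h$ of $\Sf G$, at which point Lemma~\ref{Invariance_Lemma} recognises $x\dcomp h$ as coming from an automatically invertible morphism of $\freesmc\gpdG$. The genuinely groupoid-specific ingredient is the preliminary coend-equality criterion; once it is in place, everything else is bookkeeping with the action notation and associativity of $\dcomp$.
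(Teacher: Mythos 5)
Your proposal is correct and follows essentially the same route as the paper: the ($\Leftarrow$) direction uses exactly the paper's groupoid-collapse of the coend relation to produce $\sigma$ and the map $x^{-1}\dcomp\Sf(\sigma)\dcomp u$, and the ($\Rightarrow$) direction uses the paper's cospan $(\id_X,x)$ against $p\ntensor{G}\id_{\Sf G}$ to get a section $h$ and recognise $x\dcomp h$ as an automorphism. Your write-up merely makes explicit the steps the paper leaves terse (the one-step description of $\approx$ over a groupoid, the appeal to Lemma~\ref{Invariance_Lemma}, and the split-epi-plus-split-mono conclusion).
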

\begin{proof}
($\Rightarrow$) 
Let $(p\ntensor Gx)\in \lan P\,X\,c$ be generic.  As $(p\ntensor G x) =
(p\ntensor G \id_{\Sf G})\act_{\lan P} x$, there exists $h:X\rightarrow \Sf G$
such that $h\act x=\id_X$ and 
$p\ntensor G(x\act h) 
   = (p\ntensor G x)\act_{\lan P} h 
   = (p\ntensor G \id_{\Sf G})$.
The latter identity implies that $x\act h$ is an automorphism on $\Sf G$, and
we are done.

($\Leftarrow$)
Let $(p\ntensor{G}x)\in\lan P\,X\,c$ with $x$ an isomorphism.  Consider
a cospan $f: X\rightarrow Z \leftarrow Y: g$ and 
$(q\ntensor H y)\in \lan P\,Y\,c$ with 
$\big(p\ntensor G(x\act f)\big) 
   = \big((p\ntensor G x)\act_{\lan P} f\big) 
   = \big((q\ntensor H y)\act_{\lan P} g\big) 
   = \big(q\ntensor H(y\act g)\big)$.  
Then, there exists $\sigma:G\rightarrow H$ in $\freesmc\gpdG$ such that
$p\act_P\sigma = q$ and $x\act f = (\Sf\sigma)\act y\act g$; and the map
$x^{-1}\act(\Sf\sigma)\act
y:X\rightarrow Y$ has the desired properties. \qed
\end{proof}

\begin{lemma}\label{Generic=>Minimal}
Let $F:\PSh\scatA\rightarrow\PSh\scatB$.  For every $x\in F\,X\,b$ generic,
$y\in F\,Y\,b$, and $f: Y\rightarrow X$ in $\PSh\scatA$ such that $y\act_F f =
x$, one has that $f$ is split epi. 
\end{lemma}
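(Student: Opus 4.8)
The plan is to apply the definition of generic directly to a degenerate cospan built from $\id_X$ and $f$; the point is that genericity of $x$ will then factor the identity through $f$, producing a section.

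Concretely, I would instantiate the genericity of $x\in F\,X\,b$ at the cospan $\id_X: X\rightarrow X\leftarrow Y: f$ (so the apex of the cospan is $X$ itself, with legs $\id_X$ and $f$) together with the element $y\in F\,Y\,b$. The hypothesis demanded by the definition is $x\act_F\id_X = y\act_F f$; this holds since $x\act_F\id_X = x$ because $F$ preserves identities, while $y\act_F f = x$ is precisely the assumption of the lemma.

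Genericity then supplies a morphism $h: X\rightarrow Y$ in $\PSh\scatA$ with $\id_X = h\dcomp f$ (and also $x\act_F h = y$, which we will not need here). The equation $h\dcomp f = \id_X$ exhibits $h$ as a section of $f$, so $f$ is split epi, as claimed.

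The argument is a single unwinding of the definition, so there is no genuine obstacle; the only subtlety is the orientation of the cospan---one must choose the one with apex $X$ and legs $\id_X$ and $f$, so that the required hypothesis collapses to the tautology $x = x$ rather than to a nontrivial condition.
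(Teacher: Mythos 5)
Your proposal is correct and is exactly the paper's argument: instantiate genericity of $x$ at the cospan $\id_X: X\rightarrow X\leftarrow Y: f$, obtaining $h: X\rightarrow Y$ with $h\dcomp f = \id_X$ (and $x\act_F h = y$), so $f$ is split epi. The paper's proof is just a one-line statement of this same instantiation.
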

\begin{proof}
Because the hypotheses imply the existence of $h: X\rightarrow Y$ 
such that $x\act_F h = y$ and $h \dcomp f = \id_X$. \qed
\end{proof}

We now explain how analytic functors from presheaf categories over groupoids
are engendered by their compact generic elements uniquely up to isomorphism. 
\begin{definition}
A functor $F:\PSh\scatA\rightarrow\PSh\scatB$ is said to be \noRem{engendered
by its (compact) generic elements} whenever for every $x\in F\,X\,b$ there
exists a generic element $x_0\in F(X_0)(b)$ (with $X_0=\Sf A$ for
$A\in\freesmc\scatA$) and $f:X_0\rightarrow X$ in $\PSh\scatA$ such that
$x_0\act_F f=x$.
\end{definition}
\begin{proposition}
Let $F:\PSh\scatA\rightarrow\PSh\scatB$.  For $x\in F(\Sf A)(b)$ and
$x'\in F(\Sf A')(b)$ both generic, and for $f:\Sf A\rightarrow X$ and
$f':\Sf A'\rightarrow X$ in $\PSh\scatA$ such that $x\act_F f = x' \act_F f'$, 
there exists a split epi $\alpha: A\rightarrow A'$ in
$\freesmc\scatA$ such that $x\act_F\Sf(\alpha) = x'$ and 
$f = \Sf(\alpha)\dcomp f'$. 
\end{proposition}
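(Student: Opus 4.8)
The plan is to extract the required morphism by a single application of the genericity of $x$, and then to promote the resulting map of presheaves to a split epi in $\freesmc\scatA$ by a symmetric argument together with the lifting properties of the sum functor. First I would read the hypothesis $x\act_F f = x'\act_F f'$ as an instance of the genericity of $x$ for the cospan $f\colon\Sf A\rightarrow X\leftarrow\Sf A'\colon f'$ with the chosen element $x'\in F(\Sf A')(b)$. Genericity then produces $h\colon\Sf A\rightarrow\Sf A'$ in $\PSh\scatA$ with $f = h\dcomp f'$ and $x\act_F h = x'$. These are already the two equations asserted in the conclusion, so the only remaining task is to show that $h$ has the form $\Sf\alpha$ for a split epi $\alpha$ in $\freesmc\scatA$.

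To do this I would invoke Lemma~\ref{Generic=>Minimal} twice. Since $x'$ is generic and $x\act_F h = x'$, the lemma shows $h$ is split epi; moreover its proof (equivalently, a direct appeal to the genericity of $x'$ against the cospan $\id_{\Sf A'}\colon\Sf A'\rightarrow\Sf A'\leftarrow\Sf A\colon h$) supplies a section $s\colon\Sf A'\rightarrow\Sf A$ with $s\dcomp h = \id_{\Sf A'}$ and $x'\act_F s = x$. This last equation, with $x$ now playing the role of the generic element, lets me apply Lemma~\ref{Generic=>Minimal} a second time to conclude that $s$ is split epi as well. Thus $h$ and $s$ are both epimorphisms, so by Proposition~\ref{BijSurInjOnIndices}(\ref{BijSurOnIndices}) both are surjective on indices; that is, the underlying functions $\card A\rightarrow\card{A'}$ and $\card{A'}\rightarrow\card A$ are surjective. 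As these are maps of finite cardinals, surjectivity in both directions forces them to be bijections, so $h$ and $s$ are in fact bijective on indices.

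With bijectivity on indices established, Proposition~\ref{S_Faithful_And_Conservative}(\ref{S_Conservative}) lifts $h$ and $s$ to unique morphisms $\alpha\colon A\rightarrow A'$ and $\beta\colon A'\rightarrow A$ in $\freesmc\scatA$ with $\Sf\alpha = h$ and $\Sf\beta = s$. Applying the (functorial) sum functor to $s\dcomp h = \id_{\Sf A'}$ gives $\Sf(\beta\dcomp\alpha) = \Sf(\id_{A'})$, and faithfulness (Proposition~\ref{S_Faithful_And_Conservative}(\ref{S_Faithful})) yields $\beta\dcomp\alpha = \id_{A'}$, exhibiting $\alpha$ as a split epi with section $\beta$. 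Finally $x\act_F\Sf\alpha = x\act_F h = x'$ and $f = h\dcomp f' = \Sf\alpha\dcomp f'$, which completes the argument.

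I expect the main obstacle to be exactly the passage from the presheaf-level map $h$ to a genuine split epi inside $\freesmc\scatA$: genericity by itself only yields a retraction of presheaves, and it is the symmetric second use of Lemma~\ref{Generic=>Minimal} on the section $s$, combined with the finiteness of the index sets, that upgrades mere surjectivity on indices to bijectivity and thereby lets the conservativity and faithfulness of $\Sf_\scatA$ reconstruct $\alpha$ and its section within $\freesmc\scatA$. It is worth noting that, consistent with the generality of the statement, this argument uses no groupoid hypothesis on $\scatA$.
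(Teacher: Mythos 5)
Your proof is correct and follows essentially the same route as the paper's: genericity of $x$ produces the comparison map, Lemma~\ref{Generic=>Minimal} together with Proposition~\ref{BijSurInjOnIndices}(\ref{BijSurOnIndices}) gives surjectivity on indices in both directions, and finiteness plus the conservativity and faithfulness of the sum functor (Proposition~\ref{S_Faithful_And_Conservative}) lift the splitting to $\freesmc\scatA$. The only small difference is where the reverse map comes from---the paper applies genericity of $x'$ to the original cospan $(f,f')$ and must then separately argue that a section of $\Sf\alpha$ is bijective on indices and hence lifts, whereas your $s$ is by construction already a section of $h$ satisfying $x'\act_F s = x$, so faithfulness yields $\beta\dcomp\alpha=\id_{A'}$ directly; this is a mild streamlining of the same argument, not a different method.
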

\begin{proof}
Since $x$ is generic, there exists $g:\Sf(A)\rightarrow \Sf(A')$ in
$\PSh\scatA$ such that $x\act_F g = x'$ and $g\dcomp f'=f$.  Further,
since $x'$ is generic, by Lemma~\ref{Generic=>Minimal}, $g$ is
split epi.  Analogously, since $x'$ is generic, there exists 
$g':\Sf(A')\rightarrow\Sf(A)$ in $\PSh\scatA$ such that 
$x'\act_F g' = x$ and $g'\dcomp f = f'$.  Further, since $x$ is
generic, by Lemma~\ref{Generic=>Minimal}, $g'$ is split epi.

As $g:\Sf(A)\rightarrow\Sf(A')$ and $g':\Sf(A')\rightarrow\Sf(A)$ in
$\PSh\scatA$ are both surjective, and hence bijective, on indices, 
there exist $\alpha:A\rightarrow A'$ and $\alpha':A'\rightarrow A$
in $\freesmc\scatA$ such that $\Sf\alpha=g$ and $\Sf\alpha'=g'$.
Moreover, a section $\Sf(A')\rightarrow\Sf(A)$ of $\Sf\alpha$ in
$\PSh\scatA$ is necessarily bijective on indices and hence of the
form $\Sf\sigma$ for $\sigma:A'\rightarrow A$ in $\freesmc\scatA$. 
Finally, by
Proposition~\ref{S_Faithful_And_Conservative}(\ref{S_Faithful}), the
identity $\Sf(\sigma\dcomp\alpha)=\id_{\Sf(A')}$ implies that $\sigma$
is a section of $\alpha$. \qed
\end{proof}
\begin{proposition}
Every analytic functor $\PSh\gpdG\rightarrow\PSh\scatC$ with $\gpdG$ a small
groupoid is engendered by its compact generic elements uniquely up to
isomorphism.  
\end{proposition}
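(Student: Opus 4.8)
The plan is to reduce everything to the coend presentation. Since $F$ is analytic, I would fix a natural isomorphism $F \iso \lan P$ for some coefficients functor $P:\freesmc\gpdG\rightarrow\PSh\scatC$ and observe that all the notions in play — being generic, factoring a given element through a generic one, and the uniqueness of such factorisations up to isomorphism — are defined purely in terms of the functorial action and are therefore invariant under natural isomorphism. Consequently it suffices to establish the statement for $\lan P$ itself, and then transport it across $\theta$.

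For the existence (that $\lan P$ is engendered by its compact generic elements), I would take an arbitrary $x\in\lan P\,X\,b$ and use the explicit description of the coend to write $x = p\ntensor{G}y$ for some $G\in\freesmc\gpdG$, $p\in P\,G\,b$, and $y:\Sf G\rightarrow X$ in $\PSh\gpdG$. The decisive identity is then
$x = p\ntensor{G}y = \big(p\ntensor{G}\id_{\Sf G}\big)\act_{\lan P}y$,
coming straight from the functorial action of $\lan P$, together with the proposition characterising generic elements, which tells us that $p\ntensor{G}\id_{\Sf G}$ is generic precisely because $\id_{\Sf G}$ is an isomorphism. Setting $x_0 = p\ntensor{G}\id_{\Sf G}$, $X_0 = \Sf G$ (compact, as $G\in\freesmc\gpdG$), and $f = y$ exhibits $x$ as engendered by the compact generic element $x_0$.

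For uniqueness up to isomorphism I would appeal to the preceding proposition: given two generic compact elements $x_0\in\lan P(\Sf A)(b)$ and $x_0'\in\lan P(\Sf A')(b)$ together with maps $f:\Sf A\rightarrow X$ and $f':\Sf A'\rightarrow X$ satisfying $x_0\act_{\lan P}f = x = x_0'\act_{\lan P}f'$, it supplies a split epimorphism $\alpha:A\rightarrow A'$ in $\freesmc\gpdG$ with $x_0\act_{\lan P}\Sf(\alpha) = x_0'$ and $f = \Sf(\alpha)\dcomp f'$. The groupoid hypothesis then upgrades this to an isomorphism: because $\gpdG$ is a groupoid, so is $\freesmc\gpdG$ — its index components are permutations and its object components are morphisms of $\gpdG$, all invertible — so every split epi in $\freesmc\gpdG$ is already an isomorphism. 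Thus the generic factorisation of $x$ is unique up to a (unique) isomorphism of the compact generic element.

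The main obstacle here is not a hard calculation but the identification of which earlier results carry the load. The existence half rests entirely on the characterisation of generic elements as those represented by an isomorphism, and the uniqueness half on combining the factorisation proposition with the observation that $\freesmc\gpdG$ inherits the groupoid structure of $\gpdG$, so that \emph{split epi} and \emph{isomorphism} coincide there. Once these two facts are in hand, the remaining argument is bookkeeping on the coend representation.
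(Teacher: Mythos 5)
Your proof is correct and takes essentially the same approach as the paper: reduce to $\lan P$ by transporting along a natural isomorphism, use the identity $p\ntensor{G}y = \big(p\ntensor{G}\id_{\Sf G}\big)\act_{\lan P}y$ together with the characterisation of generic elements (those of the form $q\ntensor{G}x$ with $x$ an isomorphism) for existence, and combine the split-epi factorisation proposition with the fact that $\freesmc\gpdG$ is a groupoid for uniqueness up to isomorphism, the last two points being exactly what the paper leaves implicit in its one-line proof. The only overreach is your parenthetical claim that the relating isomorphism is itself \emph{unique}---it need not be, since a compact generic element can be fixed by nontrivial automorphisms compatible with the factorisation---but the statement only asserts uniqueness of the engendering factorisation up to isomorphism, so nothing is lost.
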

\begin{proof}
It is enough to consider $\lan P:\PSh\gpdG\rightarrow\PSh\scatC$ for
$P:\freesmc\gpdG\rightarrow\PSh\scatC$.  In which case, for every 
$(p\ntensor G x)\in \lan P\,X\,c$ one has
$(p\ntensor G\id_{\Sf G})\act_{\lan P}x$. \qed
\end{proof}

Most importantly, generic elements of functors between presheaf categories
over groupoids are invariant under the functorial action.
\begin{lemma}\label{Generic_Invariance_Lemma}
Let $F:\PSh\gpdG\rightarrow\PSh\gpdH$ for $\gpdG$ and $\gpdH$
small groupoids.  If 
$x\in F(\Sf G)(h)$ is generic then so is 
the element
$(\xi \act_F x \act_F \Sf\sigma)\in F(\Sf G')(h')$ for all 
$\sigma:G\rightarrow G'$ in $\freesmc\gpdG$ and 
$\xi: h' \rightarrow h$ in $\gpdH$.
\end{lemma}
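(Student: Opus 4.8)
The plan is to reduce everything to the single observation that in a groupoid every morphism is invertible, and that the functorial actions $\act_F$ along an isomorphism are themselves invertible, hence preserve genericity. First I would record that, since $\gpdG$ is a groupoid, so is $\freesmc\gpdG$: each $\gamma=(\indmap\gamma,\objmap\gamma)$ has $\indmap\gamma$ a permutation and every component $\objmap\gamma_i$ invertible in $\gpdG$, so $\gamma$ is invertible; consequently $\Sf\sigma:\Sf G\rightarrow\Sf G'$ is an isomorphism in $\PSh\gpdG$. Likewise $\xi:h'\rightarrow h$ is an isomorphism because $\gpdH$ is a groupoid. Using the commutation $\xi\act_F x\act_F\Sf\sigma=(\xi\act_F x)\act_F\Sf\sigma=\xi\act_F(x\act_F\Sf\sigma)$ supplied by the Notation, it then suffices to prove two stability claims separately and compose them: genericity is preserved by the source-side action $\fstarg\act_F s$ along any isomorphism $s$, and by the target-side action $\xi\act_F\fstarg$ along any isomorphism $\xi$.

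For the source-side claim, let $x\in F\,X\,b$ be generic and $s:X\rightarrow X'$ an isomorphism, and test genericity of $x\act_F s$ against a cospan $f:X'\rightarrow Z\leftarrow Y:g$ and $y\in F\,Y\,b$ with $(x\act_F s)\act_F f=y\act_F g$. Rewriting the left-hand side as $x\act_F(s\dcomp f)$ turns this into a genericity test for $x$ against the cospan $s\dcomp f$, $g$; genericity of $x$ yields $h:X\rightarrow Y$ with $s\dcomp f=h\dcomp g$ and $x\act_F h=y$. I would then produce the required mediating map as $s^{-1}\dcomp h:X'\rightarrow Y$ and verify $(s^{-1}\dcomp h)\dcomp g=f$ and $(x\act_F s)\act_F(s^{-1}\dcomp h)=x\act_F h=y$, using functoriality of the action and $s\dcomp s^{-1}=\id$.

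For the target-side claim, the key point is that the action $\xi\act_F\fstarg$ is inverted by $\xi^{-1}\act_F\fstarg$, since contravariant functoriality gives $\xi^{-1}\act_F(\xi\act_F z)=z$. Testing genericity of $\xi\act_F x$ against a cospan $f:X\rightarrow Z\leftarrow Y:g$ and $y$ with $(\xi\act_F x)\act_F f=y\act_F g$, I would apply $\xi^{-1}\act_F\fstarg$ and use that it commutes with the source-side action to obtain $x\act_F f=(\xi^{-1}\act_F y)\act_F g$; genericity of $x$ then supplies $h:X\rightarrow Y$ with $f=h\dcomp g$ and $x\act_F h=\xi^{-1}\act_F y$, whereupon reapplying $\xi\act_F\fstarg$ gives $(\xi\act_F x)\act_F h=\xi\act_F(x\act_F h)=y$, as needed. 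Combining the two claims, in either order thanks to the commutation, finishes the proof.

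I expect the only real friction to be bookkeeping: keeping straight the covariant source-side action and the contravariant target-side action, their mutual commutation, and their composition and inversion laws. Conceptually there is no obstacle, since the statement is precisely the assertion that isomorphisms in the groupoids $\freesmc\gpdG$ and $\gpdH$ preserve genericity; the work lies entirely in re-verifying the two defining conditions of a generic element after transporting along these isomorphisms.
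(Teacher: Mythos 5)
Your proof is correct and follows essentially the same route as the paper's: both split the statement into stability of genericity under the source-side action $\fstarg\act_F\Sf\sigma$ and under the target-side action $\xi\act_F\fstarg$, and both exploit invertibility of $\Sf\sigma$ (since $\freesmc\gpdG$ is a groupoid when $\gpdG$ is) and of $\xi$ to transport the cospan test back to the generic element $x$. The only cosmetic difference is that you phrase the source-side step for an arbitrary isomorphism $s$ in $\PSh\gpdG$, whereas the paper argues directly with $\Sf\sigma$ and its inverse.
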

\begin{proof}
We first show that $(x\act_F\Sf\sigma)\in F(\Sf G')(h)$ is generic.  So,
consider a cospan $f: \Sf G' \rightarrow Z \leftarrow Y: g$ in $\PSh\gpdG$ and
$y\in F\,Y\,h$ such that $(x\act_F\Sf\sigma)\act_F f = y\act_F g$ in
$F\,Z\,h$.  As $x$ is generic, there exists $k:\Sf G\rightarrow Y$ in
$\PSh\gpdG$ such that $\Sf(\sigma)\act f=k\act g$ and $x\act_F k = y$.  Then,
$(\Sf\sigma^{-1})\act k:\Sf G'\rightarrow Y$ exhibits $x\act_F\Sf\sigma$ as
generic.  

Second, let us see that 
$(\xi\act_F x)\in F(\Sf G)(h')$ is generic.  To
this end, consider a cospan $f: \Sf G\rightarrow Z \leftarrow Y: g$ in
$\PSh\gpdG$ and $y\in F\,Y\,h'$ such that $(\xi\act_F x)\act_F f = y\act_F g$
in $F\,Z\,h'$.  Then, $x\act_F f = (\xi^{-1}\act_F y)\act_F g$ and since $x$
is generic, there exists $k:\Sf G\rightarrow Y$ in $\PSh\gpdG$ such that
$f = k\act g$ and $x\act_F k= \xi^{-1}\act_F y$; so that 
$(\xi\act_F x)\act_F k= y$. \qed
\end{proof}

For $F:\PSh\gpdG\rightarrow\PSh\gpdH$, define
$$\begin{array}{ll}
F\gen(G)(h)
\, \eqdef \,
\big\{\,
  x\in F(\Sf G)(h) \mbox{ \large$\mid$ }
  \mbox{$x$ is generic}
\,\big\}
&\qquad(G\in\freesmc\gpdG, h\in\gpdH^\op)
\end{array}$$
By Lemma~\ref{Generic_Invariance_Lemma}, for $\gpdG$ and $\gpdH$
small groupoids, we have a functor
$F\gen:\freesmc\gpdG\rightarrow\PSh\gpdH$
with 
action, for $\sigma$ in $\freesmc\gpdG$ and $\xi$ in $\gpdH$, given by
$F\gen(\sigma)(\xi) 
\eqdef
F(\Sf\sigma)(\xi) 
$.
As $F\gen$ is a subfunctor of the restriction of $F$ along $\Sf_{\gpdG}$, we
have the following situation 
$$\xymatrix{
\freesmc\gpdG \ar[dr]_-{F\gen} \ar[r]^{\Sf_\gpdG} 
\ar@{}[r]|-{\hspace{4.5mm}\raisebox{-13mm}{$\natrightarrow$}}
& \PSh\gpdG \ar[d]^-F
\\
& \PSh\gpdH
}$$
from which, by the universal property of left Kan extensions, we obtain a
canonical natural transformation
$
\eta^F: 
  \lan{\makebox[5mm]{$F\gen$}} \natrightarrow F:
    \PSh\gpdG\rightarrow\PSh\gpdH
$
explicitly given by 
$$\begin{array}{rcl}
\coend^{G\in\freesmc\gpdG}F\gen(G)(h)\times\freesmc\gpdG\hom{\Sf G,X}
& 
\myarrow{12}{{\eta^F}_{X,h}}{}
& 
F(X)(h)
\\[2mm]
p\ntensor{G}x & \mapsto & p\act_F x
\end{array}$$
These mappings will be now shown to be injective.  To this end, we need
consider an important minimality property of generic elements
(see~\cite[Appendice, D\'efinition~5]{Joyal1234}).

\begin{definition}
For $F:\PSh\scatA\rightarrow\PSh\scatB$, we say that $x\in F\,X\,b$ is
\Def{minimal} if for every $y\in F\,Y\,b$ and $f: Y \rightarrow X$ in
$\PSh\scatA$, $y\act_F f = x$ implies $f$ epi.
\end{definition}

\begin{proposition}
For $P:\freesmc\gpdG\rightarrow\PSh\scatC$ with $\gpdG$ a small groupoid,
$(p\ntensor{G}x)\in\lan P\,X\,c$ is minimal iff $x$ is epi.
\end{proposition}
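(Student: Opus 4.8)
The plan is to reduce the statement to the characterisation of generic elements established above, namely that an element $p\ntensor{G}x\in\lan P(\Sf G)(c)$ is generic precisely when $x$ is an isomorphism. The common starting point for both implications is the decomposition
$$
p\ntensor{G}x \ = \ (p\ntensor{G}\id_{\Sf G})\act_{\lan P}x \, ,
$$
in which $x_0\eqdef p\ntensor{G}\id_{\Sf G}\in\lan P(\Sf G)(c)$ is generic, since $\id_{\Sf G}$ is an isomorphism. Thus every element of $\lan P\,X\,c$ is the functorial image of a generic element under its associated map $x$, and the two conditions in the statement become conditions on $x$.

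For the implication that minimality forces $x$ to be epi, I would instantiate the defining property of minimality of $p\ntensor{G}x$ at the element $x_0\in\lan P(\Sf G)(c)$ and the morphism $x:\Sf G\rightarrow X$. As $x_0\act_{\lan P}x = p\ntensor{G}x$, minimality applies directly and yields that $x$ is epi. This direction is immediate and needs neither genericity nor the groupoid hypothesis.

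For the converse, assume $x$ is epi, and let $w\in\lan P\,Y\,c$ together with $f:Y\rightarrow X$ satisfy $w\act_{\lan P}f = p\ntensor{G}x$; the goal is to show that $f$ is epi. Rewriting the hypothesis as $x_0\act_{\lan P}x = w\act_{\lan P}f$, I would apply the genericity of $x_0$ to the cospan $x:\Sf G\rightarrow X\leftarrow Y:f$ and the element $w$, obtaining a morphism $h:\Sf G\rightarrow Y$ with $x = h\dcomp f$. Since the epimorphism $x$ factors through $f$ on the codomain side, $f$ is itself epi, as required.

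The argument involves no real computation; the only points requiring care are the correct instantiation of the cospan and the element in the definition of generic, and the observation that a morphism through which an epimorphism factors (here $f$ in $x = h\dcomp f$) is necessarily epi. I expect no genuine obstacle: the groupoid hypothesis enters solely through the prior identification of the generic elements of $\lan P(\Sf G)(c)$, on which the choice of the generic representative $x_0$ relies.
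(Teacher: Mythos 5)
Your proof is correct, and its second half takes a genuinely different route from the paper's. The ($\Rightarrow$) direction coincides with the paper's: instantiate minimality at the element $p\ntensor{G}\id_{\Sf G}$ and the morphism $x$, using $(p\ntensor{G}\id_{\Sf G})\act_{\lan P}x = p\ntensor{G}x$. For ($\Leftarrow$), the paper argues directly on the coend: writing the engendering element as $q\ntensor{G'}y$ with $q\ntensor{G'}(y\dcomp f) = p\ntensor{G}x$, it uses the fact that over a groupoid equality in the coend is witnessed by a single isomorphism $\sigma: G'\rightarrow G$ in $\freesmc\gpdG$ with $\Sf(\sigma)\dcomp x = y\dcomp f$, whence $f$ is epi because $y\dcomp f$ is. You instead black-box that coend manipulation inside the previously established proposition that $(p\ntensor{G}x)$ is generic iff $x$ is iso, take the generic element $p\ntensor{G}\id_{\Sf G}$, and apply the defining factorisation property of genericity to the cospan $x:\Sf G\rightarrow X\leftarrow Y: f$, obtaining $h$ with $x = h\dcomp f$ and concluding $f$ epi since the epi $x$ factors through it. Both arguments rest on the same underlying groupoid fact (it is exactly where the groupoid hypothesis enters the genericity proposition's proof); yours is more modular, works with an arbitrary element $w$ of $\lan P\,Y\,c$ without choosing a coend representative, and is legitimate since the genericity characterisation does precede this proposition in the paper, while the paper's version is more self-contained, redoing the one-step coend computation inline.
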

\begin{proof}
($\Rightarrow$) Follows from the definition of minimality using that
$(p\ntensor G\id_{\Sf G})\act_{\lan P} x=p\ntensor G x$.

($\Leftarrow$) Let $\big(q\ntensor{G'}y\big)\in\lan P\,Y\,c$ and
$f:Y\rightarrow X$ in $\PSh\gpdG$ be such that $q\ntensor{G'}(y\act f) =
\big(q\ntensor{G'} y\big)\act_{\lan P} f = (p\ntensor G x)$.  It follows that
there exists an isomorphism $\sigma: G'\rightarrow G$ in $\freesmc\gpdG$ such
that $(\Sf\sigma)\act x = y\act f$.  Thus, if $x$ is epi then so is $f$. \qed
\end{proof}

\begin{proposition}
The generic elements of a functor between presheaf categories are minimal.
\end{proposition}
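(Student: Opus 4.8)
The plan is to read off minimality directly from Lemma~\ref{Generic=>Minimal}, of which this proposition is essentially a restatement. Unfolding the definition of minimality, suppose $x\in F\,X\,b$ is generic; to show that $x$ is minimal I must check that whenever $y\in F\,Y\,b$ and $f: Y\rightarrow X$ in $\PSh\scatA$ satisfy $y\act_F f = x$, the morphism $f$ is an epimorphism. But these are precisely the hypotheses of Lemma~\ref{Generic=>Minimal}, which already yields the stronger conclusion that $f$ is \emph{split} epi. Since every split epimorphism is an epimorphism, $f$ is in particular epi, and hence $x$ is minimal.

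There is accordingly no real obstacle here: the substantive work has been done in Lemma~\ref{Generic=>Minimal}. For completeness, recall that its proof recovers the section of $f$ by applying the genericity of $x$ to the cospan $\id_X: X\rightarrow X \leftarrow Y: f$, whose compatibility condition $x\act_F\id_X = y\act_F f$ reduces to the given equation $x = y\act_F f$. Genericity then supplies $h: X\rightarrow Y$ with $\id_X = h\dcomp f$ and $x\act_F h = y$, exhibiting $f$ as split epi with section $h$. Were one to prove the proposition from scratch without invoking the lemma, this is exactly the step to carry out.

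In summary, the only point worth flagging is the passage from split epi to epi, which is a general categorical fact requiring no special properties of presheaf categories or of groupoids; everything else is a direct appeal to the earlier lemma.
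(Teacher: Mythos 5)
Your proof is correct and is essentially the paper's own: the paper likewise proves this proposition simply by appeal to Lemma~\ref{Generic=>Minimal}, with the (implicit) observation that a split epimorphism is an epimorphism. Your recap of how that lemma follows from genericity applied to the cospan $\id_X: X\rightarrow X \leftarrow Y: f$ also matches the paper's proof of the lemma itself.
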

\begin{proof}
By Lemma~\ref{Generic=>Minimal}. \qed
\end{proof}

\begin{proposition}
For every $F:\PSh\gpdG\rightarrow\PSh\gpdH$ with $\gpdG$ and $\gpdH$
small groupoids, its associated natural transformation $\eta^F$ is a
monomorphism.  \end{proposition}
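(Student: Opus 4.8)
The plan is to verify that $\eta^F$ is a monomorphism objectwise: since monomorphisms in presheaf categories, and hence in the functor category $\CAT\bighom{\PSh\gpdG,\PSh\gpdH}$, are detected pointwise, it suffices to show that each component $\eta^F_{X,h}:\lan{F\gen}(X)(h)\rightarrow F(X)(h)$ is injective. So I would fix $X\in\PSh\gpdG$ and $h\in\gpdH^\op$ and take two classes $p\ntensor{G}x$ and $p'\ntensor{G'}x'$ with the same image, \ie\ with $p\act_F x = p'\act_F x'$ in $F(X)(h)$, where $p\in F\gen(G)(h)$ and $p'\in F\gen(G')(h)$ are generic and $x:\Sf G\rightarrow X$, $x':\Sf G'\rightarrow X$. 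The goal is to produce a morphism of $\freesmc\gpdG$ witnessing the coend identity $p\ntensor{G}x = p'\ntensor{G'}x'$.

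First I would confront the two generic elements with the cospan $x:\Sf G\rightarrow X\leftarrow\Sf G':x'$. Genericity of $p$ yields $k:\Sf G\rightarrow\Sf G'$ in $\PSh\gpdG$ with $x = k\dcomp x'$ and $p\act_F k = p'$; symmetrically, genericity of $p'$ yields $k':\Sf G'\rightarrow\Sf G$ with $x' = k'\dcomp x$ and $p'\act_F k' = p$. The crux is then to show that $k$ is bijective on indices. Since $p'$ is generic and $p\act_F k = p'$, Lemma~\ref{Generic=>Minimal} makes $k$ split epi, hence epi, hence surjective on indices by Proposition~\ref{BijSurInjOnIndices}(\ref{BijSurOnIndices}); running the same argument on $k'$ makes it surjective on indices as well. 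Two surjections between the finite index sets $\card G$ and $\card{G'}$ in opposite directions force $\card G = \card{G'}$ and both underlying functions to be bijections, so $k$ is bijective on indices.

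Finally, Proposition~\ref{S_Faithful_And_Conservative}(\ref{S_Conservative}) lifts $k$ to a unique $\sigma:G\rightarrow G'$ in $\freesmc\gpdG$ with $\Sf\sigma = k$, whence $x = \Sf\sigma\dcomp x'$ and $p\act_{F\gen}\sigma = p\act_F\Sf\sigma = p\act_F k = p'$. The defining coend relation~(\ref{Coend_Equivalence}) then gives $p\ntensor{G}x = p\ntensor{G}(\Sf\sigma\dcomp x') = (p\act_{F\gen}\sigma)\ntensor{G'}x' = p'\ntensor{G'}x'$, which is the required injectivity. I expect the index-bijectivity step to be the main obstacle: a single application of genericity together with Lemma~\ref{Generic=>Minimal} only delivers a split epi, \ie\ surjectivity on indices, which by itself does not pin down the cardinalities; it is essential to exploit both generic elements symmetrically and then combine the two surjections into a bijection, at which point conservativity of the sum functor does the rest.
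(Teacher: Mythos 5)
Your proposal is correct and takes essentially the same route as the paper's own proof: confront the two generic elements symmetrically with the cospan, upgrade the resulting comparison maps to epimorphisms via Lemma~\ref{Generic=>Minimal} (the paper phrases this through minimality of generic elements, which is proved by that same lemma), deduce bijectivity on indices from Proposition~\ref{BijSurInjOnIndices}(\ref{BijSurOnIndices}) and the finiteness of the index sets, and conclude by lifting along the sum functor (Proposition~\ref{S_Faithful_And_Conservative}(\ref{S_Conservative})) and invoking the coend identification. The only cosmetic difference is that you make explicit the cardinality argument and the split-epi intermediate step that the paper leaves compressed.
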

\begin{proof}
Let $p\ntensor{G}x$ and $q\ntensor{G'}y$ in
$\lan{\makebox[5mm]{$F\gen$}}X\,h$ be such that $p\act_F x = q\act_F y$.

Since $p\in F(\Sf G)(h)$ is generic and $q\in F(\Sf G')(h)$ is minimal, there
exists an epimorphism $f:\Sf G\rightarrow\Sf G'$ in $\PSh\gpdG$ such that
$p\act_F f = q$ and $f\dcomp y = x$.
Analogously, since $q\in F(\Sf G')(h)$ is generic and $p\in F(\Sf G)(h)$ is
minimal, there exists an epimorphism $g:\Sf G'\rightarrow\Sf G$ in $\PSh\gpdG$
such that $q\act_F g = p$ and $g\dcomp x = y$.

By Proposition~\ref{BijSurInjOnIndices}(\ref{BijSurOnIndices}), $f$ and $g$
are bijective on indices and hence there exist $\sigma:G\rightarrow G'$ and
$\tau:G'\rightarrow G$ in $\freesmc\gpdG$ such that $\Sf\sigma = f$ and $\Sf\tau
= g$.

It follows that
\begin{center}
\hfill$\begin{array}[b]{rclcl}
p\ntensor{G}x 
& = & p\ntensor{G}(f\dcomp y) & = & p\ntensor{G}(\Sf(\sigma)\dcomp y)
\\[2mm]
& = & (p\act_{\lan{\makebox[3mm]{\scriptsize$F\gen$}}}\sigma)\ntensor{G'}y
& = & (p\act_F\Sf\sigma)\ntensor{G'}y
\\[2mm]
& = & (p\act_F f)\ntensor{G'}y
& = & q\ntensor{G'}y
\end{array}$\qed
\end{center}
\end{proof}

Thus, a functor between presheaf categories over groupoids is analytic iff it
is engendered by its compact generic elements. 
\begin{corollary}\label{Analytic=CompactGenericEngendered}
A functor $F:\PSh\gpdG\rightarrow\PSh\gpdH$ with $\gpdG$ and $\gpdH$ small
groupoids is analytic iff its associated natural transformation
$\eta^F:\lan{\makebox[5mm]{$F\gen$}}\natrightarrow F$ is an epimorphism.
\end{corollary}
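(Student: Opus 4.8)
The plan is to reduce the corollary to two facts already established: that $\eta^F$ is always a monomorphism (the immediately preceding proposition), and that every analytic functor $\PSh\gpdG\rightarrow\PSh\scatC$ over a small groupoid is engendered by its compact generic elements (the proposition to that effect above). The link between the two guises of the statement is the observation that, since epimorphisms of presheaves are computed pointwise as surjections, $\eta^F$ is an epimorphism precisely when each of its components $\eta^F_{X,h}$, sending $p\ntensor{G}x$ to $p\act_F x$, is surjective. Now the image of $\eta^F_{X,h}$ consists exactly of those elements of $F(X)(h)$ of the form $x_0\act_F f$ with $x_0\in F(\Sf G)(h)$ generic (hence compact, its domain being $\Sf G$) and $f:\Sf G\rightarrow X$ in $\PSh\gpdG$. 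Thus surjectivity of all the components $\eta^F_{X,h}$ is by definition the assertion that $F$ is engendered by its compact generic elements, so I would first record the equivalence: \emph{$\eta^F$ is an epimorphism iff $F$ is engendered by its compact generic elements.}

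For the forward implication, if $F$ is analytic then the cited proposition guarantees that $F$ is engendered by its compact generic elements, whence $\eta^F$ is an epimorphism by the equivalence just recorded.

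For the converse I would combine epi with mono. Assuming $\eta^F$ is an epimorphism, the preceding proposition tells us it is simultaneously a monomorphism; hence each component $\eta^F_{X,h}$ is both injective and surjective, so bijective, and therefore $\eta^F:\lan{F\gen}\natrightarrow F$ is a natural isomorphism. This exhibits $F\iso\lan{F\gen}$ with coefficients functor $F\gen:\freesmc\gpdG\rightarrow\PSh\gpdH$, so $F$ is analytic. There is no genuine obstacle here beyond the bookkeeping just described: all the substance sits in the earlier results that $\eta^F$ is invariably monic and that analytic functors over groupoids are engendered by their compact generic elements; the only new ingredients are the elementary passage from pointwise mono-plus-epi to isomorphism, together with the routine identification of the image of $\eta^F_{X,h}$.
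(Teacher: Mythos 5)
Your proof is correct and matches the paper's intended argument: the paper likewise assembles the earlier proposition that analytic functors over groupoids are engendered by their compact generic elements (giving pointwise surjectivity of $\eta^F$, i.e.\ the forward direction) with the proposition that $\eta^F$ is always a monomorphism (so that epi plus mono yields $\eta^F$ an isomorphism, exhibiting $F\iso\lan{\makebox[5mm]{$F\gen$}}$ as analytic). The identification of the image of $\eta^F_{X,h}$ with the elements engendered by compact generic elements is exactly the bridge the paper relies on as well.
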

\hide{
\begin{proof}
  \qed
\end{proof}
}
In particular, the coefficients functor of an analytic functor between
presheaf categories over groupoids is characterised by its generic elements.
Furthermore, since quasi-cartesian natural transformations between such
analytic functors are precisely those that preserve generic elements, this
correspondence extends to an equivalence of categories between coefficient
functors (and natural transformations) 
and analytic functors (and quasi-cartesian natural 
transformations). 
\begin{proposition}
\begin{enumerate}[(i)]
\item
Quasi-cartesian natural transformations between functors
$\PSh\scatA\rightarrow\PSh\scatB$ preserve generic elements. 

\item
If a natural transformation between analytic functors
$\PSh\gpdG\rightarrow\PSh\scatC$ with $\gpdG$ a small groupoid preserves
generic elements then it is quasi-cartesian.   
\end{enumerate}
\end{proposition}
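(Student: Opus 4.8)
The plan is to treat the two implications separately; in each case I would transport the given data through a single naturality square of $\varphi$ and then appeal to the defining lifting property of a generic element.

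For~(i), suppose $\varphi:F\natrightarrow G$ is a quasi-cartesian natural transformation between functors $\PSh\scatA\rightarrow\PSh\scatB$ and let $x\in F\,X\,b$ be generic; I want $\varphi_{X,b}(x)$ generic. Given a cospan $f:X\rightarrow Z\leftarrow Y:g$ and $y\in G\,Y\,b$ with $\varphi_{X,b}(x)\act_G f=y\act_G g$, I would invoke the naturality square of $\varphi$ at the leg $g$ (not $f$), which by hypothesis and the pointwise nature of quasi-pullbacks is a quasi-pullback of sets after evaluating at $b$. The pair $(x\act_F f,\,y)$ lies in the pullback of this square, since $\varphi_{Z,b}(x\act_F f)=\varphi_{X,b}(x)\act_G f=y\act_G g$ by naturality at $f$ together with the hypothesis. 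Surjectivity of the mediating morphism then furnishes $w\in F\,Y\,b$ with $w\act_F g=x\act_F f$ and $\varphi_{Y,b}(w)=y$. The equation $x\act_F f=w\act_F g$ is exactly an instance of genericity of $x$ for the original cospan, producing $h:X\rightarrow Y$ with $f=h\dcomp g$ and $x\act_F h=w$; naturality at $h$ then gives $\varphi_{X,b}(x)\act_G h=\varphi_{Y,b}(w)=y$, so $h$ witnesses that $\varphi_{X,b}(x)$ is generic.

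For~(ii), suppose $\varphi:F\natrightarrow G$ preserves generic elements; I must show that each naturality square is a quasi-pullback. Fixing $f:X\rightarrow Y$ and a pair $(u,v)$ with $u\in F\,Y\,c$, $v\in G\,X\,c$, and $\varphi_{Y,c}(u)=v\act_G f$, I need $x\in F\,X\,c$ with $x\act_F f=u$ and $\varphi_{X,c}(x)=v$. Since $F$ is analytic with groupoid domain, it is engendered by its compact generic elements, so I would write $u=u_0\act_F k$ for some generic $u_0\in F(\Sf G')(c)$ and $k:\Sf G'\rightarrow Y$. Setting $u_0'=\varphi_{\Sf G',c}(u_0)$, preservation of generics makes $u_0'$ generic in $G$, and naturality at $k$ recasts the constraint as $u_0'\act_G k=v\act_G f$ --- precisely a genericity instance for $u_0'$ against the cospan $k:\Sf G'\rightarrow Y\leftarrow X:f$. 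This yields $h:\Sf G'\rightarrow X$ with $k=h\dcomp f$ and $u_0'\act_G h=v$. Setting $x\eqdef u_0\act_F h$ then gives $x\act_F f=u_0\act_F(h\dcomp f)=u_0\act_F k=u$ and, by naturality at $h$, $\varphi_{X,c}(x)=u_0'\act_G h=v$; hence the mediating morphism is surjective and the square is a quasi-pullback.

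The main obstacle is conceptual rather than computational. In~(i) the delicate point is to take the quasi-pullback at the leg $g$ rather than $f$, and then to recognise the element $w$ returned by surjectivity as exactly the auxiliary datum demanded by genericity of $x$. In~(ii) the crux is the observation that the lifting requires only that $F$ be engendered by its generic elements and that $\varphi$ preserve them --- the analyticity of the codomain $G$ plays no role --- so that the whole argument collapses to a single application of the defining lifting property of the generic element $u_0'$.
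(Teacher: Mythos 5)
Your proof is correct on both counts; note that the paper itself omits the proof of this proposition (it is stated without argument), so your write-up fills a gap rather than diverging from a given proof. Both halves check out against the paper's machinery: in (i), taking the naturality square at the leg $g$, evaluating pointwise at $b$ (using the paper's lemma that quasi-pullbacks in presheaf categories are pointwise), and feeding the resulting element $w$ with $w\act_F g = x\act_F f$ into the genericity of $x$ is exactly the right move, and the final verification $\varphi_{X,b}(x)\act_G h=\varphi_{Y,b}(w)=y$ is sound. In (ii), your reduction to a single lifting against the cospan $k:\Sf G'\rightarrow Y\leftarrow X:f$ is valid, and your closing observation is a genuine (if mild) sharpening of the statement: the argument only uses that the domain functor $F$ is engendered by its compact generic elements --- which the paper establishes for any analytic functor out of $\PSh\gpdG$ with $\gpdG$ a groupoid --- together with preservation of generics by $\varphi$; analyticity of the codomain $G$ is never invoked. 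This is consistent with how the paper deploys the proposition (to get the equivalence of Corollary~\ref{EquivalenceCorollary}, where both functors are analytic anyway), but it is worth having noticed.
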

\hide{
\begin{proof}
  \qed
\end{proof}
}

\begin{corollary}\label{EquivalenceCorollary}
For small groupoids $\gpdG$ and $\gpdH$, the functors 
$$\xymatrix{
\CAT\hom{\freesmc\gpdG,\PSh\gpdH} 
\ar[r]<1ex>^-{\lan{(\fstarg)}}
& 
\ar[l]<1ex>^-{(\fstarg)\gen}
\gAF\hom{\gpdG,\gpdH} 
}$$
establish an equivalence of categories.
\end{corollary}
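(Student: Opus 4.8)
The plan is to exhibit $(\fstarg)\gen$ as a functor quasi-inverse to $\lan{(\fstarg)}$, exploiting that the latter is already known to be an equivalence. Indeed, by Corollary~\ref{lan_Full_And_Faithful} with $\scatC = \gpdH$, the functor $\lan{(\fstarg)}: \CAT\bighom{\freesmc\gpdG,\PSh\gpdH} \rightarrow \AF\bighom{\gpdG,\gpdH}$ is essentially surjective, full and faithful, hence an equivalence; so it suffices to recognise $(\fstarg)\gen$ as one of its quasi-inverses. The first task is to verify that $(\fstarg)\gen$ is a well-defined functor $\AF\bighom{\gpdG,\gpdH} \rightarrow \CAT\bighom{\freesmc\gpdG,\PSh\gpdH}$. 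On objects, for an analytic $F$, the coefficients functor $F\gen$ is available by Lemma~\ref{Generic_Invariance_Lemma}. On morphisms, a quasi-cartesian natural transformation $\varphi: F\natrightarrow F'$ preserves generic elements by part~(i) of the preceding Proposition, so it restricts to $\varphi\gen: F\gen\natrightarrow F'\gen$; since $\varphi\gen$ is a componentwise restriction of $\varphi$, preservation of identities and composites is immediate.

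Next I would build the natural isomorphism $\lan{(\fstarg)}\circ(\fstarg)\gen \iso \id$. For an analytic $F:\PSh\gpdG\rightarrow\PSh\gpdH$, the canonical transformation $\eta^F:\lan{F\gen}\natrightarrow F$ is a monomorphism by the Proposition preceding Corollary~\ref{Analytic=CompactGenericEngendered}, and an epimorphism by Corollary~\ref{Analytic=CompactGenericEngendered} since $F$ is analytic. As $\PSh\gpdH$ is balanced — a transformation that is pointwise both injective and surjective in $\Set$ is an isomorphism — the map $\eta^F$ is an isomorphism. Its naturality in $F$ with respect to quasi-cartesian transformations follows from the explicit description $p\ntensor{G}x\mapsto p\act_F x$ together with the preservation of generic elements, so that $\eta$ provides the required natural isomorphism.

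It then remains to produce the natural isomorphism $(\fstarg)\gen\circ\lan{(\fstarg)}\iso\id$, that is $(\lan P)\gen \iso P$ naturally in $P:\freesmc\gpdG\rightarrow\PSh\gpdH$. By the Proposition characterising the generic elements of $\lan P$, an element $(p\ntensor{G'}s)\in\lan P(\Sf G)(h)$ is generic exactly when $s:\Sf G'\rightarrow\Sf G$ is an isomorphism; and by conservativity of the sum functor (Proposition~\ref{S_Faithful_And_Conservative}(\ref{S_Conservative})) every such $s$ is $\Sf\gamma$ for an isomorphism $\gamma$ in $\freesmc\gpdG$. Hence every generic element of $\lan P(\Sf G)(h)$ equals $q\ntensor{G}\id_{\Sf G}$ for some $q\in P\,G\,h$, and this $q$ is unique by Corollary~\ref{p*id=p'*id_=>_p=p'}. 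This yields a bijection $P\,G\,h \iso (\lan P)\gen(G)(h)$, $q\mapsto q\ntensor{G}\id_{\Sf G}$, whose naturality in $G$ and $h$ is checked by comparing functorial actions, using $(\lan P)\gen(\sigma)(\xi) = \lan P(\Sf\sigma)(\xi)$ and the identity $\xi\act_{\lan P}(q\ntensor{G}\id_{\Sf G})\act_{\lan P}\Sf\sigma = (\xi\act_P q\act_P\sigma)\ntensor{G'}\id_{\Sf G'}$.

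The functoriality and naturality verifications above are routine, the conceptual content having been secured by the generic-element machinery. I expect the main obstacle to be confirming naturality of the two families of isomorphisms — in particular that $\eta^F$ is natural with respect to quasi-cartesian (rather than arbitrary) transformations — the remaining balancedness and bijection arguments being direct consequences of the results already established.
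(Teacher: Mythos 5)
Your proposal is correct and follows essentially the same route as the paper: the paper likewise obtains the equivalence by assembling Corollary~\ref{lan_Full_And_Faithful}, the fact that $\eta^F$ is mono (always) and epi (precisely when $F$ is analytic), and the proposition identifying quasi-cartesian transformations between analytic functors over groupoids with the generic-preserving ones, so that $(\fstarg)\gen$ becomes a well-defined quasi-inverse. Your extra direct construction of the isomorphism $P \iso (\lan P)\gen$ via Corollary~\ref{p*id=p'*id_=>_p=p'} and conservativity of $\Sf$ is a harmless (and correct) elaboration of what the paper leaves implicit.
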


\section{Characterisation of analytic functors}

We conclude the paper with two characterisations of analytic functors
between presheaf categories over groupoids by means of preservation
properties.
As a first step in this direction, we leave the verification of the
following to the reader.
\begin{proposition}
Analytic functors $\PSh\scatA\rightarrow\PSh\scatB$ preserve filtered
colimits.  For $\scatA$ a groupoid, they further preserve wide quasi-pullbacks
and cofiltered limits.  
\end{proposition}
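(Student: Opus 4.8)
The plan is to reduce everything to the coend model $\lan P$ of~(\ref{P!coend}) — legitimate since every analytic functor is of this form — and to argue objectwise in $\scatB^\op$, using that limits and colimits in $\PSh\scatB$ are computed pointwise and that each evaluation functor preserves them.

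First I would dispatch filtered colimits, uniformly in $\scatA$. Writing $\Sf A=\coprod_{i\in\card A}\yon(A_i)$ as a finite coproduct of representables, the hom-functor $\PSh\scatA\bighom{\Sf A,\fstarg}$ is naturally isomorphic to the finite product $\prod_{i\in\card A}(\fstarg)(A_i)$ of evaluations; since evaluations preserve all colimits and finite products commute with filtered colimits in $\Set$, each $\PSh\scatA\bighom{\Sf A,\fstarg}$ preserves filtered colimits. As multiplication by the fixed set $P\,A\,b$ and the coend $\coend^{A}$ are themselves colimits, and colimits commute with colimits, $\lan P\,(\fstarg)\,b$ preserves filtered colimits for every $b$, hence so does $\lan P$.

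Next, taking $\scatA=\gpdG$ a groupoid, the decisive structural fact I would record is that $\freesmc\gpdG$ is then itself a groupoid: a morphism $(\indmap\gamma,\objmap\gamma)$ has $\indmap\gamma$ a bijection and $\objmap\gamma$ a family of isomorphisms, so it is invertible. In particular every split epi of $\freesmc\gpdG$ is an isomorphism. For wide quasi-pullbacks, given a wide cospan $(f_i\colon X_i\rightarrow Z)_{i\in I}$ with wide pullback $(\pi_i\colon W\rightarrow X_i)_i$, I only need the mediating map into the wide pullback of $(\lan P f_i)$ to be epi, i.e.\ pointwise surjective. So given a compatible family $(\xi_i)_i$ with common image $\zeta\in\lan P\,Z\,b$, I write $\zeta=(r\ntensor{H}\id_{\Sf H})\act_{\lan P}z$ and each $\xi_i=(p_i\ntensor{G_i}\id_{\Sf G_i})\act_{\lan P}x_i$ with generic left factors (Corollary~\ref{Analytic=CompactGenericEngendered}). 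From $(p_i\ntensor{G_i}\id)\act_{\lan P}(x_i\dcomp f_i)=(r\ntensor{H}\id)\act_{\lan P}z$ the genericity of both sides produces, via Lemma~\ref{Generic=>Minimal}, split epis in both directions between $\Sf G_i$ and $\Sf H$; these are bijective on indices (Proposition~\ref{BijSurInjOnIndices}) and hence, by Proposition~\ref{S_Faithful_And_Conservative} and the groupoid fact, come from an isomorphism $\alpha_i\colon G_i\rightarrow H$ with $p_i\act_P\alpha_i=r$ and $x_i\dcomp f_i=\Sf(\alpha_i)\dcomp z$. Transporting along $\alpha_i^{-1}$ (Corollary~\ref{p*id=p'*id_=>_p=p'}), each $\xi_i$ rewrites over the single generic datum $(H,r)$ as $r\ntensor{H}y_i$ with $y_i=\Sf(\alpha_i^{-1})\dcomp x_i$, whence $y_i\dcomp f_i=z$ for all $i$. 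Thus $(y_i)_i$ is a cone over the wide cospan, inducing a unique $w\colon\Sf H\rightarrow W$ with $w\dcomp\pi_i=y_i$, and $r\ntensor{H}w$ maps to $(\xi_i)_i$, giving surjectivity.

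For a cofiltered diagram $D\colon\mathbb{J}\rightarrow\PSh\gpdG$ I would show the comparison $\lan P(\lim_j D_j)(b)\rightarrow\lim_j\lan P(D_j)(b)$ is bijective. Since $\PSh\gpdG\bighom{\Sf G,\fstarg}\iso\prod_i(\fstarg)(G_i)$ preserves limits, a generic datum over $\lim_j D_j$ is exactly a compatible family of generic data over the $D_j$, and the same generic/two-sided split-epi analysis shows a compatible family $(\xi_j)_j$ is represented, for each $j$, by $(G_j,p_j,x_j)$ with transition isomorphisms $\beta_u\colon G_j\rightarrow G_{j'}$ (for $u\colon j\rightarrow j'$) satisfying $p_j\act_P\beta_u=p_{j'}$ and $x_j\dcomp D_u=\Sf(\beta_u)\dcomp x_{j'}$. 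The hard part — and the main obstacle of the whole proposition — is now to assemble a single generic datum $(G,p)$ and a cone $(w_j\colon\Sf G\rightarrow D_j)$ realising the family, which requires choosing the $\beta_u$ \emph{coherently} across $\mathbb{J}$ (equivalently, it is the injectivity of the comparison map that is delicate, surjectivity being the easier half). My plan is to regard the admissible $\beta_u$ as a cofiltered system of sets over $\mathbb{J}$ and invoke that a cofiltered limit of nonempty finite sets is nonempty; the finiteness needed is supplied by the finiteness of the index-cardinalities $\card{G_j}$, which bounds the permutation parts $\indmap{\beta_u}$. This compactness step is exactly the point at which the finitary nature of the data is essential, and it is where I expect the real work to lie.
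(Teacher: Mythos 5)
The paper offers no proof of this proposition (it is explicitly left to the reader), so your proposal stands or falls on its own merits. Its first two parts are essentially correct. The filtered-colimit argument is the standard one and is fine. For wide quasi-pullbacks your mechanism works, with two remarks. First, the detour through genericity and Lemma~\ref{Generic=>Minimal} is unnecessary: when $\gpdG$ is a groupoid every morphism of $\freesmc\gpdG$ is invertible, so the generating relation~(\ref{Coend_Equivalence}) is already an equivalence relation and the equality $p_i\ntensor{G_i}(x_i\dcomp f_i)=r\ntensor{H}z$ immediately yields the isomorphisms $\alpha_i$; this is exactly how the paper itself argues when proving that $\lan\phi$ is quasi-cartesian over a groupoid. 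Second, you prove only that $\lan P$ sends wide \emph{pullbacks} to wide quasi-pullbacks, whereas the paper's later use of this property (for instance, the asserted implication from preservation of quasi-pullbacks to preservation of epimorphisms, whose input is the quasi-pullback square formed by an epimorphism and two identities, which is not a pullback) requires arbitrary wide quasi-pullbacks as input diagrams. The repair is one line: each $\Sf G$ is a finite coproduct of representables, hence projective, so $\lan P$ preserves epimorphisms, and the general case follows by composing mediating epimorphisms.

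The genuine gap is the cofiltered-limit part, and it is worse than deferred work: the compactness step you plan cannot be carried out at the stated level of generality. Your finiteness claim is incorrect: finiteness of $\card{G_j}$ bounds only the permutation components $\indmap{\beta_u}$, while the components $\objmap{\beta_u}$ range over hom-sets of $\gpdG$, which a small groupoid may have infinite; so your systems of admissible transition isomorphisms need not be systems of \emph{finite} sets. In fact no argument can close this gap for arbitrary small groupoids. Take $\gpdG$ to be the one-object groupoid with automorphism group $\mathbb{Z}$, take $\scatC$ trivial, and let $P\colon\freesmc\gpdG\rightarrow\Set$ be the singleton on the unique object of cardinality one and empty elsewhere; then $\lan P$ is the orbit functor sending a $\mathbb{Z}$-set $X$ to $X/\mathbb{Z}$. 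Apply it to the cofiltered diagram $\mathbb{Z}/2\mathbb{Z}\leftarrow\mathbb{Z}/4\mathbb{Z}\leftarrow\mathbb{Z}/8\mathbb{Z}\leftarrow\cdots$ of $\mathbb{Z}$-sets (translation actions, reduction maps): the limit is the set of $2$-adic integers with $\mathbb{Z}$ acting by translation, whose orbit set is uncountable, while each $(\mathbb{Z}/2^n\mathbb{Z})/\mathbb{Z}$ is a singleton, so $\lim_n\lan P(\mathbb{Z}/2^n\mathbb{Z})$ is a singleton and the comparison map fails to be injective. Hence the cofiltered-limit clause is provable only under an additional hypothesis, namely that the automorphism groups of $\gpdG$ are finite (as in Joyal's and Hasegawa's setting, where in effect $\gpdG=\Bij$). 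Under that hypothesis your plan does go through, but the finiteness must be drawn from the hom-sets of $\gpdG$ rather than from the cardinalities $\card{G_j}$: all hom-sets of $\freesmc\gpdG$ are then finite, the coend splits as a coproduct over isomorphism classes of quotients by the finite groups $\mathrm{Aut}_{\freesmc\gpdG}(G)$, coproducts commute with connected limits, and quotients by finite group actions commute with cofiltered limits, both injectivity and surjectivity of the comparison map reducing to the non-emptiness of cofiltered limits of non-empty finite sets.
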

\hide{
\begin{proof}
\qed
\end{proof}
}
Recall that a \Rem{wide quasi-pullback} is a commutative diagram
$\big(\!\xymatrix@C=12.5pt{Q\ar[r]\ar@/^1em/[rr]&D_i\ar[r]&D}\!\big)_{i\in
  I}$ for an indexing set $I$ such that the unique mediating morphism from
the cone
$\big(\!\xymatrix@C=12.5pt{Q\ar[r]\ar@/^1em/[rr]&D_i&D}\!\big)_{i\in I}$
to a limiting cone of the diagram
$\big(\!\xymatrix@C=12.5pt{D_i\ar[r]&D}\!\big)_{i\in I}$  is an
epimorphism.

\begin{corollary}
Analytic endofunctors on presheaf categories over groupoids have both initial
algebra and final coalgebra.
\end{corollary}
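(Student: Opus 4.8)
The plan is to derive both existence statements from the classical initial- and terminal-sequence (Ad\'amek) constructions, feeding them with the preservation properties recorded in the preceding proposition. Throughout, I would fix an analytic endofunctor $F:\PSh\gpdG\rightarrow\PSh\gpdG$ with $\gpdG$ a small groupoid. Since here the source groupoid is $\gpdG$ itself, that proposition applies and yields at once that $F$ preserves filtered colimits and cofiltered limits. I also use that $\PSh\gpdG$, being a presheaf category, is complete and cocomplete, with an initial object $0$ (the constant empty presheaf) and a terminal object $1$.

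For the initial algebra, I would form the initial chain $0 \rightarrow F0 \rightarrow F^{2}0 \rightarrow \cdots$, whose first map is the unique arrow out of $0$ and whose later maps are its iterated $F$-images, and set $\mu \eqdef \mathrm{colim}_{n\in\Nat}F^{n}0$. Because $(\Nat,\le)=\omega$ is a directed poset, hence a filtered category, $F$ preserves this colimit; comparing $F\mu$ with the colimit of the once-shifted chain then gives a canonical isomorphism $F\mu \iso \mu$. The standard Ad\'amek argument shows that the colimiting cocone endows $\mu$ with an $F$-algebra structure that is initial.

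For the final coalgebra I would dualise throughout. I would form the terminal chain $1 \leftarrow F1 \leftarrow F^{2}1 \leftarrow \cdots$, whose first map is the unique arrow into $1$ and whose later maps are its iterated $F$-images, and set $\nu \eqdef \lim_{n\in\Nat}F^{n}1$. Here the shape is $\omega^\op$, which is a cofiltered category, so the hypothesis that $\gpdG$ is a groupoid is precisely what lets the proposition guarantee that $F$ preserves this limit; one then obtains $\nu \iso F\nu$ and, by the dual of the Ad\'amek argument, that the limiting cone presents $\nu$ as the final $F$-coalgebra.

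I expect no genuine obstacle here: the single colimit (resp.\ limit) whose preservation the construction demands is exactly an instance of a preserved filtered colimit (resp.\ cofiltered limit), once one observes that $\omega$ is filtered and $\omega^\op$ is cofiltered. The only remaining bookkeeping is the isomorphism $F\mu\iso\mu$ (resp.\ $\nu\iso F\nu$) together with the verification of initiality (resp.\ finality), both of which are entirely routine.
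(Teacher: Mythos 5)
Your proposal is correct and is essentially the paper's intended argument: the corollary is stated as an immediate consequence of the preceding proposition (analytic functors preserve filtered colimits, and, over a groupoid, cofiltered limits), fed into the standard Ad\'amek initial-sequence construction over $\omega$ and its dual terminal-sequence construction over $\omega^{\op}$, exactly as you describe. Nothing further is needed.
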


We will now consider the following properties of functors between presheaf
categories over groupoids:
\begin{enumerate}[$(1)$]
  \item\label{PresFilColim}
    preservation of filtered colimits,

  \item\label{PresEpi}
    preservation of epimorphisms,

  \item\label{PresQpbk}
    preservation of quasi-pullbacks,

  \item\label{PresWQpbk}
    preservation of wide quasi-pullbacks,

  \item\label{PresCofilLim}
    preservation of cofiltered limits,

  \item\label{EngMin}
    being engendered by compact minimal elements,

  \item\label{EngGen}
    being engendered by compact generic elements (\ie~analytic).
\end{enumerate}
and 
show
\begin{center}\begin{tabular}{ll}
Proposition~\ref{Compact_Minimal_Engendered}: & 
$(\ref{PresFilColim})\thinspace\&\thinspace(\ref{PresEpi})
\Rightarrow
(\ref{EngMin})$
\\[1mm]
Proposition~\ref{Wide_QuasiPullbacks_Engendered}:
& 
$(\ref{PresWQpbk})\thinspace\&\thinspace(\ref{EngMin})
\Rightarrow
(\ref{EngGen})$
\\[1mm]
Proposition~\ref{Cofiltered_Limits_Engendered}:
& 
$(\ref{PresQpbk})\thinspace\&\thinspace(\ref{PresCofilLim})\thinspace\&\thinspace(\ref{EngMin})
\Rightarrow
(\ref{EngGen})$
\end{tabular}\end{center}
so that, since
$(\ref{PresWQpbk}) \Rightarrow (\ref{PresQpbk}) \Rightarrow
(\ref{PresEpi})$, we have that
\begin{center}
  \hfill
  $(\ref{PresFilColim})\thinspace\&\thinspace(\ref{PresWQpbk}) 
  \Rightarrow
  (\ref{EngGen})$
  \quad
  and 
  \quad
  $(\ref{PresFilColim})\thinspace\&\thinspace(\ref{PresQpbk})\thinspace\&\thinspace(\ref{PresCofilLim})
  \Rightarrow
  (\ref{EngGen})$
  \hfill\null
\end{center}

\begin{definition}
A functor $F:\PSh\scatA\rightarrow\PSh\scatB$ is said to be \noRem{engendered
by its (compact) minimal elements} whenever for every $x\in F\,X\,b$ there
exists a minimal element $x_0\in F(X_0)(b)$ (with $X_0=\Sf A$ for
$A\in\freesmc\scatA$) and $f:X_0\rightarrow X$ in $\PSh\scatA$ such that
$x_0\act_F f=x$.
\end{definition}

\begin{proposition}\label{Compact_Minimal_Engendered}
Every functor $\PSh\gpdG\rightarrow\PSh\scatC$, with $\gpdG$ a
small groupoid, preserving filtered colimits and epimorphisms is
engendered by its compact minimal elements.
\end{proposition}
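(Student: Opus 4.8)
The plan is to argue in two stages: first use the two preservation hypotheses to factor an arbitrary element of $F$ through a compact object $\Sf A$, and then minimise the number $\card A$ of indices over all such factorisations in order to extract a genuinely minimal element.

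\emph{Stage one: reduction to compact domains.} Fix $x\in F\,X\,b$. I would write $X$ as the directed union (hence filtered colimit) of its finitely generated subobjects $X'$, each of which is the image of some $\Sf A\rightarrow X$ with $A\in\freesmc\gpdG$; these are closed under finite unions and together cover $X$. Since $F$ preserves filtered colimits and colimits in $\PSh\scatC$ are computed pointwise, the set $F\,X\,b$ is the filtered colimit of the sets $F\,X'\,b$, so $x=x'\act_F\iota$ for some finitely generated subobject $\iota:X'\rightarrow X$ and some $x'\in F\,X'\,b$. Choosing an epimorphism $e:\Sf A\rightarrow X'$ witnessing finite generation and using that $F$ preserves epimorphisms (so that $F(e)$ is pointwise surjective), I obtain $x_1\in F(\Sf A)(b)$ with $x_1\act_F e=x'$, whence $x_1\act_F(e\dcomp\iota)=x$. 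Thus every element factors as $x=x_1\act_F g$ through a compact $\Sf A$; this reduction will be reused for arbitrary elements below.

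\emph{Stage two: minimisation.} Among all factorisations $(A,x_1,g)$ of $x$ with $A\in\freesmc\gpdG$ and $x_1\act_F g=x$, choose one with $\card A\in\Nat$ least, which is possible by stage one. I claim the chosen $x_1\in F(\Sf A)(b)$ is minimal. Given $y\in F\,Y\,b$ and $f:Y\rightarrow\Sf A$ with $y\act_F f=x_1$, I must show $f$ is epi. Applying stage one to $y$ yields a compact $\Sf{A''}$, an element $y_1\in F(\Sf{A''})(b)$, and $h:\Sf{A''}\rightarrow Y$ with $y_1\act_F h=y$, so that $y_1\act_F(h\dcomp f)=x_1$. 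By the isomorphism~(\ref{PShC[SA,SB]}), the map $h\dcomp f:\Sf{A''}\rightarrow\Sf A$ is given by an underlying function $\varphi:\card{A''}\rightarrow\card A$ together with component morphisms $A''_j\rightarrow A_{\varphi j}$ in $\gpdG$; as $\gpdG$ is a groupoid these are invertible, so each summand $\yon(A''_j)$ maps isomorphically onto the summand $\yon(A_{\varphi j})$ and the image of $h\dcomp f$ is exactly the subcoproduct $\Sf(A|_S)$ on the index set $S=\varphi(\card{A''})\subseteq\card A$ (where $A|_S$ is the restriction of $A$ to $S$), which is again compact. Corestricting through this image gives $x_2\in F\big(\Sf(A|_S)\big)(b)$ with $x_2\act_F(m\dcomp g)=x$ for the inclusion $m:\Sf(A|_S)\rightarrow\Sf A$, so $\card S$ is an attainable index count and minimality of $\card A$ forces $S=\card A$. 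Hence $h\dcomp f$ is epi, and therefore so is its second factor $f$. This proves $x_1$ minimal, and since $x_1\act_F g=x$ with $\Sf A$ compact, $F$ is engendered by its compact minimal elements.

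The main obstacle is the image identification in stage two, which is the only place the groupoid hypothesis is used. Via the explicit description~(\ref{PShC[SA,SB]}) one must verify that a map between finite coproducts of representables over a groupoid has, as the image of its epi--mono factorisation, the subcoproduct of the codomain indexed by the image $S$ of the underlying function; the crucial point is that each component morphism $A''_j\rightarrow A_{\varphi j}$ is invertible, so $\yon$ carries it to an isomorphism and the $j$-th summand lands isomorphically on a full summand of the target (equivalently, a representable over a groupoid has no proper nonzero subobjects). Once this is established — so that the image stays compact and is all of $\Sf A$ exactly when $\varphi$ is surjective — the remainder is routine bookkeeping of the index counts $\card A\in\Nat$, together with the elementary cancellation that $h\dcomp f$ epi implies $f$ epi and that a map of presheaves is epi precisely when its image is the whole codomain.
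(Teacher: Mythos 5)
Your proof is correct, and its first stage coincides with the paper's: preservation of filtered colimits lands $x$ in a finitely generated (the paper says finitely presentable) stage $X'$, and preservation of epimorphisms lifts it along a compact cover $\Sf A\rightepi X'$. The second stage is where you genuinely diverge. The paper keeps the compact element $x_1\in F(\Sf G)(c)$ produced by stage one fixed and minimises $\card{G'}$ over \emph{monomorphisms} $m:\Sf(G')\rightmono\Sf(G)$ with $x'\act_F m=x_1$; to verify minimality of the chosen $x'$ it epi--mono factorises an arbitrary test map $g:Y\rightarrow\Sf(G')$ and appeals to the characterisation $\Sub_{\PSh\gpdG}(\Sf G)=\bigsetof{\sum_{i\in I}\yon(G_i)\suchthat I\subseteq\card G}$ to recognise the mono part as a subcoproduct inclusion that competes in the minimisation, hence is bijective on indices, hence (groupoid) an iso, so $g$ is epi. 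You instead minimise $\card A$ over \emph{all} factorisations $x=x_1\act_F g$ through compacts, and in the verification you re-run stage one on the test element $y$, so that the only map you ever need to analyse is the composite $h\dcomp f$ between two compacts, whose image is read off directly from the hom-set formula~(\ref{PShC[SA,SB]}) together with invertibility in $\gpdG$. What each route buys: yours needs only the immediate fact that images of maps between finite coproducts of representables over a groupoid are subcoproducts, and avoids both epi--mono factorisation of maps with arbitrary domain and the full subobject characterisation; the cost is that the preservation hypotheses on $F$ are invoked twice (once for $x$, once for the test element $y$), whereas the paper uses them only once and thereafter argues entirely within $\PSh\gpdG$. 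Both arguments ultimately rest on the same groupoid-specific fact --- a representable over a groupoid has no proper nonzero subobject --- so the proofs are close cousins, but your minimisation class and verification step are genuinely different from the paper's, and both are sound.
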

\begin{proof}
Let $F: \PSh\gpdG\rightarrow\PSh\scatC$ be a functor, with $\gpdG$ a
small groupoid, preserving filtered colimits and epimorphisms, and let 
$x\in F\,X\,c$.

Since $X\in\PSh\gpdG$ is a filtered colimit of finitely presentable
objects, 
there exist a finitely presentable object $X_0\in\PSh\gpdG$, an
element $x_0\in F(X_0)(c)$, and a morphism $f: X_0\rightarrow X$ in
$\PSh\gpdG$ such that $x_0\act_F f = x$.

Further, since finitely presentable objects in $\PSh\gpdG$ are
quotients of finite coproducts of representables, 
there exist an object $G\in\freesmc\gpdG$, an element 
$x_1\in F(\Sf G)(c)$, and an epimorphism $q: \Sf G\rightepi X_0$ in
$\PSh\gpdG$ such that $x_1\act_F q = x_0$.

Let 
$G'\in\freesmc\gpdG$, $x'\in F(\Sf G')(c)$, and 
$m: \Sf(G')\rightmono\Sf(G)$ a monomorphism in $\PSh\gpdG$ be such
that $x'\act_F m = x_1$ with $\card{G'}$ chosen minimally. 
We have that $x'$ is a compact element engendering $x$, 
and we now show that it is minimal.

Indeed, consider $y \in F\,Y\,c$ and $g: Y \rightarrow \Sf(G')$ in
$\PSh\gpdG$ such that $y\act_F g = x'$.  Note that the epi-mono
factorisation of $g$ is of the form 
$$\xymatrix{
Y \ar@{->>}[dr]_-\epsilon \ar[rr]^-g && \Sf(G') 
\\
& \Sf(G_0) \ar@{>->}[ur]_-\mu & 
}$$
because, as $\gpdG$ is a groupoid, 
$$\begin{array}{rcll}
\Sub_{\PSh\gpdG}(\Sf G) 
& = & 
\Bigsetof{\
  \sum_{i\in I}\yon(G_i)
  \ \big| \
  I\subseteq\card G
\ }
&\qquad(G\in\freesmc\gpdG)
\end{array}$$
Thus we have $G_0\in\freesmc\gpdG$, 
$y\act_F\epsilon\in F(\Sf(G_0))(c)$, and the monomorphism 
$\mu\dcomp m: \Sf(G_0)\rightmono\Sf(G)$ in $\PSh\gpdG$ satisfying
$(y\act_F\epsilon)\act_F(\mu\dcomp m) = x_1$, from which it follows by
the minimality of $\card{G'}$ that $\card{G'} \subseteq \card{G_0}$.
Hence, 
the monomorphism~$\mu$ is bijective on indices and therefore 
(since $\gpdG$ is a groupoid) an isomorphism, establishing that $g$
is epi. \qed
\end{proof}

\begin{lemma}\label{Generic_Characterisation}
\begin{enumerate}[(i)]
\item\label{Generic_Characterisation_One}
For $F: \PSh\scatA\rightarrow\PSh\scatB$, if $x\in F\,X\,b$ is generic
then for every minimal $y\in F\,Y\,b\,$ and $f: Y \rightarrow X$ in
$\PSh\scatA$, $y\act_F f = x$ implies $f$ iso. 

\item\label{Generic_Characterisation_Two}
Let $F: \PSh\scatA\rightarrow\PSh\scatB$ be a functor engendered by
its (compact) minimal elements and preserving quasi-pullbacks.  For
$x\in F\,X\,b$, if for every (compact) minimal element $y\in F\,Y\,b$ (with
$Y=\Sf A$ for $A\in\freesmc\scatA$) and $f:Y\rightarrow X$ in $\PSh\scatA$,
$y\act_F f = x$ implies $f$ iso, then $x$ is generic.  
\end{enumerate}
\end{lemma}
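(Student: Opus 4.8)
The plan is to prove the two parts of Lemma~\ref{Generic_Characterisation} by exploiting the definitions of generic and minimal elements and, for part~(\ref{Generic_Characterisation_Two}), the hypotheses of being engendered by compact minimal elements and preserving quasi-pullbacks.

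For part~(\ref{Generic_Characterisation_One}), the key observation is that genericity of $x$ immediately gives a retraction. Given a minimal $y\in F\,Y\,b$ and $f:Y\rightarrow X$ with $y\act_F f = x$, I would apply Lemma~\ref{Generic=>Minimal} (which states that such an $f$ is split epi, so there exists $h:X\rightarrow Y$ with $x\act_F h = y$ and $h\dcomp f = \id_X$). This already shows $f$ is split epi; it remains to show $f$ is also (split) mono, equivalently that $f\dcomp h = \id_Y$. Here I would use minimality of $y$: since $y\act_F(f\dcomp h) = (y\act_F f)\act_F h = x\act_F h = y$, and minimality of $y$ says that any $g$ with $y\act_F g = y$ must be epi; applying this to $g = f\dcomp h$ shows $f\dcomp h$ is epi, and since it is split (by $h\dcomp f = \id_X$ giving $f\dcomp h$ a candidate inverse relation), a short argument identifies $f\dcomp h = \id_Y$. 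Thus $f$ has a two-sided inverse $h$ and is an isomorphism.

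For part~(\ref{Generic_Characterisation_Two}), the goal is to show $x$ is generic given the stated minimality-lifting condition. So I take an arbitrary cospan $f:X\rightarrow Z\leftarrow Y:g$ and $w\in F\,Y\,b$ with $x\act_F f = w\act_F g$, and must produce $h:X\rightarrow Y$ with $f = h\dcomp g$ and $x\act_F h = w$. The strategy is: first, since $F$ is engendered by compact minimal elements, write $x = x_0\act_F e$ for a compact minimal $x_0\in F(X_0)(b)$ and $e:X_0\rightarrow X$. The pair $(x,w)$ agreeing on $Z$ gives an element of the pullback $P$ of the cospan $X\xrightarrow{f} Z \xleftarrow{g} Y$; because $F$ preserves quasi-pullbacks, the canonical element of $F\,P\,b$ coming from this matching data can be pulled back compatibly, and composing with $e$ realizes the minimal $x_0$ over the pullback. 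The hypothesis on $x$ — that minimal elements mapping onto $x$ do so via isomorphisms — then forces the relevant comparison map into $X$ to be an iso, which factors $f$ through $g$ and yields the required $h$.

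The main obstacle I expect is the second part: carefully setting up the quasi-pullback diagram so that preservation of quasi-pullbacks by $F$ yields an \emph{epimorphism} from $F(P)(b)$ onto the pullback of $F(X)(b)\rightarrow F(Z)(b)\leftarrow F(Y)(b)$, and then locating the compact minimal element $x_0$ as an element of $F(P)(b)$ that lifts the matched pair $(x,w)$. The delicate point is combining the engendering hypothesis (to obtain a compact minimal witness) with the quasi-pullback surjectivity (to lift it into $P$) and then invoking the iso-hypothesis on $x$ to conclude the projection $P\rightarrow X$ restricts to an isomorphism on the relevant component, giving the factorizing map $h$. Verifying that the resulting $h$ genuinely satisfies both $f = h\dcomp g$ and $x\act_F h = w$ will require tracking the functorial actions through the pullback square, which is where the bookkeeping is heaviest.
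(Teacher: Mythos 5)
Your part~(\ref{Generic_Characterisation_One}) is correct and is essentially the paper's argument: Lemma~\ref{Generic=>Minimal} gives the section $h$ with $h\dcomp f=\id_X$ and $x\act_F h=y$, and minimality of $y$ supplies the missing epimorphism. Your closing step does work, since $f\dcomp h$ is idempotent (because $h\dcomp f=\id_X$) and an idempotent epimorphism is the identity; the paper instead observes that the section $h$ itself is epi by minimality of $y$ and split mono, hence iso, whence $f$ is iso --- same substance.

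Part~(\ref{Generic_Characterisation_Two}), however, has a genuine gap, exactly at the step you call delicate: ``composing with $e$ realizes the minimal $x_0$ over the pullback.'' You apply the engendering hypothesis to $x$ first, writing $x=x_0\act_F e$ with $e:X_0\rightarrow X$, and then want to lift $x_0$ (or $e$) through the projection $p:P\rightarrow X$. Preservation of quasi-pullbacks cannot do this: it only says that the canonical map $F\,P\,b\rightarrow F\,X\,b\times_{F\,Z\,b}F\,Y\,b$ is surjective, so it lifts the matched \emph{pair of elements} $(x,w)$ to some $z\in F\,P\,b$ with $z\act_F p=x$ and $z\act_F q=w$; it produces no morphism $X_0\rightarrow P$ relating $x_0$ to $z$, and in general none exists. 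Nor does the engendering of $x$ itself help: the iso-hypothesis applied to $e$ only says that $x$ is, up to iso, a compact minimal element, which gives no factorisation through $P$. The correct order is the reverse of yours. First lift $(x,w)$ to $z\in F\,P\,b$ by quasi-pullback preservation; \emph{then} apply the engendering hypothesis to $z$, obtaining a compact minimal $z_0\in F(Z_0)(b)$ and $h:Z_0\rightarrow P$ with $z_0\act_F h=z$. Since $z_0\act_F(h\dcomp p)=x$, the iso-hypothesis on $x$ applies to the composite $h\dcomp p$ (not to the projection $p$), making it invertible. The required map is then $(h\dcomp p)^{-1}\dcomp h\dcomp q:X\rightarrow Y$, which satisfies $\big((h\dcomp p)^{-1}\dcomp h\dcomp q\big)\dcomp g=f$ (using the pullback-square identity $p\dcomp f=q\dcomp g$) and $x\act_F\big((h\dcomp p)^{-1}\dcomp h\dcomp q\big)=z_0\act_F(h\dcomp q)=z\act_F q=w$, so $x$ is generic.
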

\begin{proof}
(\ref{Generic_Characterisation_One})~
Assume the hypotheses.
By Lemma~\ref{Generic=>Minimal}, $f$ has a section 
$g: X \rightarrow Y$ in $\PSh\scatA$ such that $g\act_F x = y$.  Since
$y$ is minimal, $g$ is epi and hence an iso, and then so is $f$.

\medskip
(\ref{Generic_Characterisation_Two})~Let $x\in F\,X\,b$ satisfy the hypothesis
of the statement, and let the cospan $f: X\rightarrow Z\leftarrow Y: g$ in
$\PSh\scatA$ and $y\in F\,Y\,b$ be such that $x\act_F f = y\act_F g$ in
$F\,Z\,b$.

Consider a pullback square
$$\xymatrix{
P \ar[r]^-q \ar[d]_-p & Y \ar[d]^-g \\
X \ar[r]_-f & Z
}$$
in $\PSh\scatA$.  Since $F$ preserves quasi-pullbacks, there exists
$z \in F\,P\,b$ such that $z\act_F p = x$ and $z\act_F q = y$.
Further, since $F$ is engendered by its (compact) minimal elements,
there exists $Z_0\in\PSh\scatA$ (with $Z_0 = \Sf A$ for
$A\in\freesmc\scatA$), $z_0\in F(Z_0)(b)$ minimal, and
$h:Z_0\rightarrow Z$ in $\PSh\scatA$ such that $z_0\act_F h = z$.

By hypothesis then, as $z_0\act_F(h\dcomp p) = x$, we have that
$h\dcomp p: Z_0\rightarrow X$ in $\PSh\scatA$ is an isomorphism.
We thus have $(h\dcomp p)^{-1} \dcomp h \dcomp q: X\rightarrow Y$ in
$\PSh\scatA$ such that
$$\begin{array}{rclcl}
\big((h\dcomp p)^{-1} \dcomp h \dcomp q\big) \dcomp g 
& = & (h\dcomp p)^{-1} \dcomp h \dcomp p \dcomp f 
& = & f
\end{array}$$
and 
$$\begin{array}{rclclcl}
x \act_F \big((h\dcomp p)^{-1} \dcomp h \dcomp q\big) 
& = & z_0 \act_F (h \dcomp q) 
& = & z \act_F q 
& = & y
\end{array}$$
showing that $x$ is generic. \qed
\end{proof}

\begin{proposition}\label{Wide_QuasiPullbacks_Engendered}
Every functor $\PSh\gpdG\rightarrow\PSh\scatC$, with $\gpdG$ a
small groupoid, engendered by its compact minimal elements and preserving
wide quasi-pullbacks is engendered by its compact generic elements. 
\end{proposition}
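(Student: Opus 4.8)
The plan is to produce, for each element $x\in F\,X\,c$ (with $c\in\scatC^\op$), a \emph{generic} compact element engendering it, which is exactly what is needed. Since preservation of wide quasi-pullbacks entails preservation of quasi-pullbacks (a quasi-pullback being a two-legged wide quasi-pullback), Lemma~\ref{Generic_Characterisation}(\ref{Generic_Characterisation_Two}) applies to $F$. Hence it suffices to exhibit a compact minimal $x_*\in F(\Sf A_*)(c)$ together with $m_*:\Sf A_*\rightarrow X$ satisfying $x_*\act_F m_* = x$ and such that, for every compact minimal $y\in F(\Sf A')(c)$ and $f:\Sf A'\rightarrow\Sf A_*$ with $y\act_F f = x_*$, the map $f$ is an isomorphism; such an $x_*$ is then generic by that lemma, and the proposition follows.

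To construct $x_*$, I would first collect the family of \emph{all} compact minimal elements engendering $x$. Since $\freesmc\gpdG$ has only a set of objects and each $F(\Sf A)(c)$ and each $\PSh\gpdG\hom{\Sf A,X}$ is a set, the triples $(A_i,y_i,h_i)$ with $y_i\in F(\Sf A_i)(c)$ minimal, $h_i:\Sf A_i\rightarrow X$, and $y_i\act_F h_i = x$ form a set-indexed family $(i\in I)$, nonempty because $F$ is engendered by its compact minimal elements. I would then form the wide pullback $L$ of $(h_i:\Sf A_i\rightarrow X)_{i\in I}$ in the complete category $\PSh\gpdG$, with projections $\pi_i:L\rightarrow\Sf A_i$ and common leg $\ell:L\rightarrow X$. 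As each $y_i$ satisfies $y_i\act_F h_i = x$, the $y_i$ form a compatible cone, i.e.\ an element at $c$ of the pointwise wide pullback of $(F h_i)_i$; and since $F$ preserves the (trivial) wide quasi-pullback $L$, the mediating map $F L\rightarrow\lim_i(F\Sf A_i\rightarrow F X)$ is epi, hence pointwise surjective. This yields $z\in F\,L\,c$ with $z\act_F\pi_i = y_i$ for all $i\in I$ and $z\act_F\ell = x$. Engendering $z$ by a compact minimal element finally gives $x_*\in F(\Sf A_*)(c)$ minimal and $k:\Sf A_*\rightarrow L$ with $x_*\act_F k = z$; putting $m_*\eqdef k\dcomp\ell$ we get $x_*\act_F m_* = x$.

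It then remains to verify genericity. Given a compact minimal $y\in F(\Sf A')(c)$ and $f:\Sf A'\rightarrow\Sf A_*$ with $y\act_F f = x_*$, the element $y$ engenders $x$ through $f\dcomp m_*$, so $(A',y,f\dcomp m_*)$ is one of the triples, say with index $j$. Thus $L$ carries a projection $\pi_j:L\rightarrow\Sf A'$ with $z\act_F\pi_j = y$, and $g\eqdef k\dcomp\pi_j:\Sf A_*\rightarrow\Sf A'$ satisfies $x_*\act_F g = y$. Now the composites fix the two elements, $x_*\act_F(g\dcomp f) = x_*$ and $y\act_F(f\dcomp g) = y$, so by minimality of $x_*$ and of $y$ both $g\dcomp f$ and $f\dcomp g$ are epi; being the final factors of epi composites, $f$ and $g$ are themselves epi, hence surjective on indices by Proposition~\ref{BijSurInjOnIndices}(\ref{BijSurOnIndices}). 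The two surjections $\card{A'}\rightarrow\card{A_*}$ and $\card{A_*}\rightarrow\card{A'}$ force $\card{A'}=\card{A_*}$, so $f$ is bijective on indices; since $\gpdG$ is a groupoid it lifts along $\Sf$ by Proposition~\ref{S_Faithful_And_Conservative}(\ref{S_Conservative}) to a morphism of $\freesmc\gpdG$, necessarily an isomorphism, whence $f$ is an isomorphism as required.

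I expect the main obstacle to be the correct orchestration of the wide pullback: one must pull back over \emph{all} compact minimal factorisations of $x$ simultaneously, for only then is every candidate ``further factorisation'' $f$ of $x_*$ already present as a projection of $L$, which is precisely what, via the minimality of both $x_*$ and $y$, forces $f$ to be an isomorphism. Confirming that wide-quasi-pullback preservation supplies the \emph{simultaneous} lift $z$ (rather than merely binary comparisons) is the crux; the residual index-counting and the groupoid lifting are routine consequences of the propositions already established.
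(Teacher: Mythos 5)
Your proposal is correct and takes essentially the same route as the paper's proof: both form the wide pullback of \emph{all} compact minimal factorisations of $x$, use preservation of wide quasi-pullbacks to obtain an element (your $z$, the paper's $p$) lying over all of them, engender that element by a compact minimal one, and verify its genericity via Lemma~\ref{Generic_Characterisation}(\ref{Generic_Characterisation_Two}). The only divergence is in that final verification: the paper isolates the split-mono property~(\ref{Mono_Lemma}) and concludes via ``epi and split mono implies iso'', whereas you use the projection indexed by the competing factorisation $(A',y,f\dcomp m_*)$ to produce epimorphisms in both directions and then conclude by counting indices and invoking the groupoid structure --- an equally valid rendering of the same underlying idea.
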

\begin{proof}
Let $F:\PSh\gpdG\rightarrow\PSh\scatC$, with $\gpdG$ a small groupoid, be a
functor engendered by its compact minimal elements and preserving 
wide quasi-pullbacks.

For $x\in F\,X\,b$ consider the wide cospan 
$$\
\nabla 
= 
\bigpair{\
  \nabla_{(x_0,f)} = f:\Sf G\rightarrow X\mbox{ in }\PSh\gpdG
  \ \big| \ 
  x_0 \in F(\Sf G)(b)\mbox{ is minimal and } x_0\act_F f = x
\ }
$$
and let $\pi: P\conerightarrow \nabla$ be a limiting cone in
$\PSh\gpdG$.  (Note that, as $F$ is engendered by its compact minimal
elements, $\nabla$ is non-empty.) 

Since $F$ preserves wide quasi-pullbacks, there exists $p\in F\,P\,b$ such
that, for all minimal ${x_0 \in F(\Sf G)(b)}$ and $f: \Sf G\rightarrow X$ in
$\PSh\gpdG$ with $x_0\act_F f = x$, we have that $p\act_F\pi_{(x_0,f)} = x_0$.
Thus, the cone $\pi$ consists of epimorphims. 

\medskip
We now show the following general property: 
\begin{equation}\label{Mono_Lemma}\begin{minipage}{\minipagelength}
For all minimal $y\in F(\Sf(G'))(b)$ and $g:\Sf(G')\rightarrow P$ in
$\PSh\gpdG$ such that $y\act_F g = p$, it follows that $g$ is split
mono. 
\end{minipage}\end{equation}
Indeed, with respect to any minimal ${x_0 \in F(\Sf G)(b)}$ and 
$f: \Sf G\rightarrow X$ in $\PSh\gpdG$ with $x_0\act_F f = x$, we have the
endomorphism
$$\xymatrix{
\Sf(G') \ar[r]_-g \ar@/^1.25pc/[rrrr]<1ex>^-{e_{(x_0,f)}} & P 
\ar@{->>}[rrr]_-
  {\pi_{\mbox{\scriptsize(}y,g\dcomp\pi_{(x_0,f)}\dcomp f\mbox{\scriptsize)}}} 
&&& \Sf(G')
}$$
(since $y$ is minimal and 
$y\act_F(g\dcomp\pi_{(x_0,f)}\dcomp f)
   = p\act_F (\pi_{(x_0,f)}\dcomp f)
   = x_0\act_F f
   = x$)
satisfying
$$\begin{array}{rclcl}
y\act_F \big(g \dcomp \pi_{(y,g\dcomp\pi_{(x_0,f)}\dcomp f)}\big)
& = & p \act_F \pi_{(y,g\dcomp\pi_{(x_0,f)}\dcomp f)}
& = & y
\end{array}$$
which, by the minimality of $y$, is then an epimorphism.  Thus, 
$e_{(x_0,f)}$ is bijective on indices and, as $\gpdG$ is a groupoid,
an isomorphism; which makes $g$ a split mono.

\medskip
As $F$ is engendered by its compact minimal elements it follows
from~(\ref{Mono_Lemma}) that there exists $G_0\in\freesmc\gpdG$,
$p_0\in F(\Sf(G_0))(b)$, and a section $m:\Sf(G_0)\rightmono P$ in
$\PSh\gpdG$ such that $p_0\act_F m = p$.  Since such a $p_0$ engenders
$x$ (as  
$p_0\act_F (m \dcomp \pi_{(x_0,f)}\dcomp f)
  = p\act_F (\pi_{(x_0,f)}\dcomp f)
  = x_0\act_F f
  = x$), 
we conclude the proof by showing that it further satisfies the
hypothesis of
Lemma~\ref{Generic_Characterisation}(\ref{Generic_Characterisation_Two}).
Indeed, let $y\in F(\Sf(G'))(b)$ be minimal and 
$f: \Sf(G')\rightarrow \Sf(G_0)$ in $\PSh\gpdG$ be such that 
$y\act_F f = p_0$.  Since $p_0$ is minimal, $f$ is epi.  Further,
since $y$ is minimal and $y\act_F(f\dcomp m) = p_0\act_F m = p$, we
have from~(\ref{Mono_Lemma}) that $f\dcomp m$ is split mono.  It follows
that $f$ is split mono, and thus an iso. \qed
\end{proof}

\begin{proposition}\label{Cofiltered_Limits_Engendered}
Every functor $\PSh\gpdG\rightarrow\PSh\scatC$, with $\gpdG$ a
small groupoid, engendered by its compact minimal elements, and preserving
quasi-pullbacks and cofiltered limits is engendered by its compact
generic elements.   
\end{proposition}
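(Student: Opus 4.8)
The plan is to follow the architecture of the proof of Proposition~\ref{Wide_QuasiPullbacks_Engendered} as closely as possible, localising the use of wide quasi-pullbacks to a single step and then supplying that step from binary quasi-pullbacks together with cofiltered limits. Let $F:\PSh\gpdG\to\PSh\scatC$ be engendered by its compact minimal elements and preserve quasi-pullbacks and cofiltered limits, and fix $x\in F\,X\,b$. As there, I would form the wide cospan $\nabla$ of all minimal $x_0\in F(\Sf G)(b)$ equipped with $f:\Sf G\to X$ such that $x_0\act_F f=x$, and a limiting cone $\pi: P\conerightarrow\nabla$ in $\PSh\gpdG$. The key observation is that the \emph{only} place the earlier proof invokes preservation of wide quasi-pullbacks is to produce an element $p\in F\,P\,b$ with $p\act_F\pi_{(x_0,f)}=x_0$ for every index $(x_0,f)$ of $\nabla$; granted such a $p$, the verification that $\pi$ consists of epimorphisms, the auxiliary property~(\ref{Mono_Lemma}), the passage to a compact section $m:\Sf(G_0)\rightmono P$ carrying a minimal $p_0$ engendering $x$, and the concluding appeal to Lemma~\ref{Generic_Characterisation}(\ref{Generic_Characterisation_Two}) (which needs only preservation of quasi-pullbacks, available here) all go through verbatim. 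So the whole task reduces to constructing this single element $p$.

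To build $p$, I would exhibit the wide pullback $P$ as a cofiltered limit of finite wide pullbacks. For each finite subset $J$ of the index set of $\nabla$, let $P_J$ be the iterated fibre product over $X$ of the legs indexed by $J$, with the evident projections $P_{J'}\to P_J$ for $J\subseteq J'$; since the finite subsets form a directed poset, the $P_J$ constitute a cofiltered system with $P\iso\lim_J P_J$. Each $P_J$ is assembled from binary pullbacks, so preservation of quasi-pullbacks gives, by induction on $\card J$, that the set $S_J$ of $p_J\in F(P_J)(b)$ with $p_J\act_F\pi^J_{(x_0,f)}=x_0$ for all $(x_0,f)\in J$ is non-empty. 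These lifts can be chosen one coordinate at a time: for $p_J\in S_J$ one has $p_J\act_F\lambda_J=x=x_0'\act_F f'$ for any new pair $(x_0',f')$, where $\lambda_J:P_J\to X$ is the common leg, so the binary quasi-pullback $P_{J\cup\{(x_0',f')\}}=P_J\times_X\Sf G'$ supplies a lift restricting to $p_J$; hence every transition map $S_{J'}\to S_J$ is surjective. Finally, preservation of cofiltered limits identifies $F\,P\,b$ with $\lim_J F(P_J)(b)$, so a compatible thread through the $S_J$ is exactly the sought $p$.

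The main obstacle is precisely the non-emptiness of the inverse limit $\lim_J S_J$: over an uncountable directed index set, surjectivity of the transition maps does \emph{not} by itself guarantee a non-empty limit, since the $S_J$ are a priori infinite sets. The resolution must therefore exploit the groupoid rigidity and minimality that are absent in the abstract setting, and this is where I expect the real work of the proof to lie. My plan is to cut each $S_J$ down to its \emph{minimal} lifts and to use the index-theoretic machinery of the paper --- the conservativity of the sum functor (Proposition~\ref{S_Faithful_And_Conservative}) and Proposition~\ref{BijSurInjOnIndices}, together with the cardinality-minimisation technique of Proposition~\ref{Compact_Minimal_Engendered} and property~(\ref{Mono_Lemma}) --- to show that this subsystem satisfies a Mittag--Leffler/stabilisation condition. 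Concretely, since over a groupoid a morphism of sums of representables that is bijective on indices is forced to be an isomorphism, the minimal lifts should be pinned down up to the rigid groupoid action; equivalently, one aims to bound the index cardinalities of minimal lifts so that the relevant inverse system becomes one of finite non-empty sets, whose cofiltered limit is automatically non-empty. Once such a coherent thread $p$ is secured, the reduction of the first paragraph completes the argument. \qed
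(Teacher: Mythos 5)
Your first paragraph is correct: once an element $p\in F\,P\,b$ with $p\act_F\pi_{(x_0,f)}=x_0$ for all indices of $\nabla$ is in hand, the rest of the proof of Proposition~\ref{Wide_QuasiPullbacks_Engendered} goes through using only minimality, the groupoid structure, engendering by compact minimal elements, and Lemma~\ref{Generic_Characterisation}(\ref{Generic_Characterisation_Two}). But the gap you flag at the end is the whole difficulty, and your plan does not close it. The index set of $\nabla$ is in general uncountable (it ranges over all compact minimal elements over all $G\in\freesmc\gpdG$ and all maps $\Sf G\rightarrow X$), and a cofiltered limit of non-empty sets with surjective transition maps over an uncountable directed poset can be empty --- there are standard counterexamples (Waterhouse); equivalently, a cofiltered limit of surjections of sets need not be surjective, which is exactly why ``finite quasi-pullbacks $+$ cofiltered limits'' cannot be assembled pointwise into the wide lift. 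Your proposed repair --- restricting to minimal lifts and forcing the system to consist of \emph{finite} non-empty sets via groupoid rigidity --- is not carried out, and there is no evident way to carry it out: the objects $P_J$ are not of the form $\Sf A$ (over a groupoid with an infinite automorphism group even a binary pullback such as $\Sf G\times_X \Sf G$ is an infinite coproduct of representables, since $\yon(g)\times\yon(g)\iso\sum_{\mathrm{Aut}(g)}\yon(g)$), and nothing bounds the number of minimal lifts in $F(P_J)(b)$. So the proposal reduces the proposition to a statement that is, as stated, unproved and quite possibly as hard as the proposition itself.

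The paper's proof avoids the wide pullback entirely and invokes preservation of cofiltered limits only along countable chains, where no non-emptiness issue arises. It first proves a well-foundedness property: any infinite cochain of compact minimal elements $x_0\leftepi x_1\leftepi\cdots$ connected by epimorphisms $\Sf(G_{i+1})\rightepi\Sf(G_i)$ stabilises, because applying $F$ to the limit of the chain of presheaves yields an element which, being engendered by some compact minimal element $x\in F(\Sf G)(c)$, gives epis $\Sf G\rightepi\Sf(G_i)$ for all $i$; hence the cardinalities $\card{G_i}$ are bounded by $\card G$, are increasing, and so eventually constant, making the transition maps bijective on indices and therefore (over a groupoid) isomorphisms. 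Then, for $x\in F\,X\,c$, one considers the poset of finite cochains $x\leftarrow x_0\leftepi\cdots\leftepi x_n$ of compact minimal elements in which the $e_i$ are \emph{proper} epis; it is non-empty by engendering, every chain in it is finite by well-foundedness, so it has a maximal element, and the last entry $x_n$ of a maximal cochain both engenders $x$ and satisfies the hypothesis of Lemma~\ref{Generic_Characterisation}(\ref{Generic_Characterisation_Two}): any map from a compact minimal element onto $x_n$ is epi by minimality, and must be iso, since otherwise the cochain could be properly extended. Hence $x_n$ is generic. If you want to salvage your write-up, replace the construction of the global lift $p$ by this descending-chain argument; your reduction paragraph then becomes unnecessary.
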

\begin{proof}
Let $F$ be a functor as in the hypothesis.

We first show that 
\begin{equation}\label{Well-Founded_Lemma}\begin{minipage}{\minipagelength}
Every infinite cochain
$$\xymatrix{
x_0 & \ar@{->>}[l]_-{g_1} x_1 & \ar@{->>}[l] \cdots &
\ar@{->>}[l]_-{g_i} x_i & \ar@{->>}[l] \cdots
}
\qquad(i\in\Nat)$$
with $x_i\in F(\Sf(G_i))(c)$ minimal and 
$g_i:\Sf(G_{i+1})\rightepi\Sf(G_i)$ in $\PSh\gpdG$ such that
${x_{i+1}\act_F g_{i+1} = x_i}$ for all $i\in\Nat$, stabilises;~\ie~there
exists $i_0\in\Nat$ such that $g_i$ is an iso for all $i\geq i_0$.  
\end{minipage}\end{equation}
Indeed, let 
$$
\begin{array}{c}\xymatrix{
\Sf(G_0) & \ar@{->>}[l]_-{g_1} \Sf(G_1) \ar@{}[rd]|(.45){\cdots} &
\ar@{->>}[l] \cdots & \ar@{->>}[l]_-{g_i} \Sf(G_i) \ar@{}[d]|(.5)\cdots &
\ar@{->>}[l] \cdots 
\\
& \ar[lu]|-{\pi_0} P \ar[u]|-{\pi_1} \ar[rru]|-{\pi_i} & &
}\end{array}
\qquad(i\in\Nat)$$
be limiting in $\PSh\gpdG$.  As $F$ preserves cofiltered limits there
exists $p\in F\,P\,c$  such that $p\act_F\pi_i=x_i$ for all
$i\in\Nat$.
Further, since $F$ is engendered by its compact minimal elements,
there exist $x\in F(\Sf G)(c)$ minimal and $f:\Sf G\rightarrow P$ in
$\PSh\gpdG$ such that $x\act_F f = p$.  Thus, as 
$x\act_F (f\dcomp\pi_i) = x_i$ is minimal, we have epimorphisms
$f\dcomp\pi_i:\Sf(G)\rightepi\Sf(G_i)$ in $\PSh\gpdG$ for all
$i\in\Nat$.  It follows that $\card{G_i} \subseteq \card{G}$ for all
$i\in\Nat$ and hence, since
$\card{G_i}\subseteq\card{G_{i+1}}$~($i\in\Nat$) that there exists
$i_0\in\Nat$ such that $\card{G_i} = \card{G_{i+1}}$ for all $i\geq i_0$.
Thus, for all such $i$, we have that $g_i$ is bijective on indices
and consequently, as $\gpdG$ is a groupoid, an iso. 

Now, for $x\in F\,X\,c$, consider the set $\widewidehat{x}$ of finite
cochains  
$$\xymatrix{
x & \ar[l]_-e x_0 & \ar@{->>}[l]_-{e_1} x_1 & \ar@{->>}[l] \cdots &
\ar@{->>}[l]_-{e_n} x_n & 
}\qquad(n\in\Nat)$$
with $x_i\in F(\Sf(G_i))(c)$ minimal for all $0\leq i\leq n$,
$e:\Sf(G_0)\rightarrow X$ in $\PSh\gpdG$ such that $x_0\act_F e = x$,
and proper epis~(\ie~not isos) $e_i:\Sf(G_{i+1})\rightepi\Sf(G_i)$ in
$\PSh\gpdG$ such that $x_{i+1}\act_F e_i = x_i$ for all 
$1\leq i \leq n$. 
Since $F$ is engendered by its compact minimal elements,
$\widewidehat{x}$ is non-empty.  Further,
by~(\ref{Well-Founded_Lemma}) above, every chain in 
$\widewidehat{x}$ under the prefix order is finite; hence the set of
maximal elements 
of $\widewidehat{x}$ is non-empty.  Finally, since for every maximal
cochain $(x\leftepi x_0\leftepi\cdots\leftepi x_n)$ in
$\widewidehat{x}$, we have that $x_n$ engenders $x$ and satisfies the 
hypothesis of
Lemma~\ref{Generic_Characterisation}(\ref{Generic_Characterisation_Two})
we are done. \qed
\end{proof}

We have thus established the following characterisation result.
\begin{theorem}\label{CharacterisationTheorem}
For a functor between presheaf categories over groupoids the following are
equivalent. 
\begin{enumerate}[(i)]
\item
  The functor is analytic (\ie~engendered by its compact generic elements).

\item
  The functor preserves filtered colimits and wide
  quasi-pullbacks. 

\item
  The functor preserves filtered colimits, quasi-pullbacks, and
  cofiltered limits. 
\end{enumerate} 
\end{theorem}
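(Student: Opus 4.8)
The plan is to read the theorem as the capstone assembling the three engendering propositions together with the preservation proposition recorded just above; the substantive analysis has already been carried out, so the proof amounts to organising four implications --- (i)$\Rightarrow$(ii), (i)$\Rightarrow$(iii), (ii)$\Rightarrow$(i), and (iii)$\Rightarrow$(i) --- which together yield the two biconditionals through~(i) and hence the full equivalence. I would first record the identification, valid because both source and target are groupoids, of condition~(i) with property~$(\ref{EngGen})$, being engendered by compact generic elements, via Corollary~\ref{Analytic=CompactGenericEngendered}; this lets me treat analyticity throughout as property~$(\ref{EngGen})$.

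For the forward implications (i)$\Rightarrow$(ii) and (i)$\Rightarrow$(iii), I would invoke the preservation proposition stated immediately before the theorem: an analytic functor $\PSh\gpdG\rightarrow\PSh\scatC$ with $\gpdG$ a groupoid preserves filtered colimits, wide quasi-pullbacks, and cofiltered limits. Since a wide quasi-pullback specialises to an ordinary quasi-pullback, preservation of wide quasi-pullbacks entails preservation of quasi-pullbacks, so both~(ii) and~(iii) follow at once.

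For the converses I would chain the three propositions using the recorded implications $(\ref{PresWQpbk})\Rightarrow(\ref{PresQpbk})\Rightarrow(\ref{PresEpi})$ among the preservation properties. Assuming~(ii), that is $(\ref{PresFilColim})$ and $(\ref{PresWQpbk})$, I first obtain $(\ref{PresEpi})$ from $(\ref{PresWQpbk})$, so that Proposition~\ref{Compact_Minimal_Engendered} yields $(\ref{EngMin})$; then Proposition~\ref{Wide_QuasiPullbacks_Engendered}, applied to $(\ref{PresWQpbk})$ and $(\ref{EngMin})$, delivers $(\ref{EngGen})$, which is~(i). Assuming instead~(iii), that is $(\ref{PresFilColim})$, $(\ref{PresQpbk})$, and $(\ref{PresCofilLim})$, I obtain $(\ref{PresEpi})$ from $(\ref{PresQpbk})$, so Proposition~\ref{Compact_Minimal_Engendered} again yields $(\ref{EngMin})$; then Proposition~\ref{Cofiltered_Limits_Engendered}, applied to $(\ref{PresQpbk})$, $(\ref{PresCofilLim})$, and $(\ref{EngMin})$, delivers $(\ref{EngGen})$, which is~(i).

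Since the genuine content resides in the three engendering propositions, there is no real obstacle at the level of the theorem itself; the only points demanding care are the bookkeeping ones just used: that the target-groupoid hypothesis is precisely what licenses identifying analyticity with property~$(\ref{EngGen})$, and that the specialisation of wide quasi-pullbacks to binary quasi-pullbacks, and of quasi-pullbacks to epimorphisms, correctly routes the hypotheses of~(ii) and~(iii) into the appropriate propositions.
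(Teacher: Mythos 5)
Your proposal is correct and takes essentially the same route as the paper, whose own proof is just to assemble Corollary~\ref{Analytic=CompactGenericEngendered}, the preservation proposition stated immediately before the theorem, and Propositions~\ref{Compact_Minimal_Engendered}, \ref{Wide_QuasiPullbacks_Engendered} and~\ref{Cofiltered_Limits_Engendered}, using the implications $(\ref{PresWQpbk})\Rightarrow(\ref{PresQpbk})\Rightarrow(\ref{PresEpi})$ to route the hypotheses exactly as you describe. The only nitpick is your closing remark attributing the identification of analyticity with property~$(\ref{EngGen})$ to the \emph{target}-groupoid hypothesis alone: as you yourself note earlier, Corollary~\ref{Analytic=CompactGenericEngendered} needs both source and target to be groupoids.
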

\hide{
\begin{proof}
Assemble Corollary~\ref{Analytic=CompactGenericEngendered} and
Propositions~\ref{Compact_Minimal_Engendered},~\ref{Wide_QuasiPullbacks_Engendered},~\ref{Cofiltered_Limits_Engendered}.
\qed
\end{proof} 
}

\begin{corollary}\label{AFCorollary}
Small groupoids, analytic functors between their presheaf categories, and
quasi-cartesian natural transformations between them form a 
\mbox{$2$-category} $\AF$.
\end{corollary}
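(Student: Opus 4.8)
The plan is to realise $\AF$ as a sub-2-category of $\CAT$, taking as 0-cells the presheaf categories $\PSh\gpdG$ over small groupoids, as 1-cells the analytic functors between them, and as 2-cells the quasi-cartesian natural transformations. Since associativity of composition, the unit laws, and the interchange law are already in force in $\CAT$, they restrict automatically; the only thing to establish is that these two classes are closed under the structural operations of $\CAT$ and contain the necessary identities. That each hom $\AF\hom{\gpdG,\gpdH}$ is a genuine category is immediate from the definition, the closure of quasi-cartesian transformations under vertical composition having been recorded above (and identity transformations being quasi-cartesian since their naturality squares are literally pullbacks).

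First I would dispatch the 1-cells by appeal to Theorem~\ref{CharacterisationTheorem}. The identity functor $\id_{\PSh\gpdG}$ preserves all limits and colimits, hence in particular filtered colimits and wide quasi-pullbacks, and is therefore analytic. If $F:\PSh\gpdG\rightarrow\PSh\gpdH$ and $G:\PSh\gpdH\rightarrow\PSh\gpdK$ are analytic, then each preserves filtered colimits and wide quasi-pullbacks, so the composite $G\op F$ does too; as its domain and codomain are again presheaf categories over groupoids, Theorem~\ref{CharacterisationTheorem} identifies $G\op F$ as analytic.

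For the 2-cells it suffices, after decomposing horizontal composition into left and right whiskerings followed by a vertical composite, to show that whiskering a quasi-cartesian transformation by an analytic functor yields a quasi-cartesian transformation. Right whiskering is immediate: for quasi-cartesian $\beta:G\natrightarrow G'$ and analytic $F$, the naturality square of $\beta F$ at a morphism $f$ is precisely the naturality square of $\beta$ at $F\!f$, hence a quasi-pullback. Left whiskering is the crux: for quasi-cartesian $\alpha:F\natrightarrow F'$ and analytic $G$, the naturality square of $G\alpha$ at $f$ is the image under $G$ of the naturality square of $\alpha$ at $f$, which is a quasi-pullback; one therefore needs $G$ to preserve quasi-pullbacks. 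This holds because $G$ is analytic with groupoid domain, so Theorem~\ref{CharacterisationTheorem} furnishes preservation of quasi-pullbacks. Both whiskerings are thus quasi-cartesian, and since quasi-cartesian transformations are closed under vertical composition, so are all horizontal composites.

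I expect the left-whiskering step to be the sole genuine obstacle: closure of the 2-cells under horizontal composition rests entirely on the fact that analytic functors over groupoids preserve quasi-pullbacks, and it is exactly here that the characterisation theorem---and not merely the definition of analyticity---is indispensable. With closure and identities in hand, every remaining 2-categorical axiom is inherited verbatim from $\CAT$, completing the identification of $\AF$ as a 2-category.
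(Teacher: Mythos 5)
Your proposal is correct and matches the paper's intended argument: Corollary~\ref{AFCorollary} is meant to follow from Theorem~\ref{CharacterisationTheorem} exactly as you argue, with the preservation characterisation giving closure of analytic functors under composition, and preservation of quasi-pullbacks by analytic functors over groupoids (the implication $(\ref{PresWQpbk})\Rightarrow(\ref{PresQpbk})$ together with the theorem) handling the left whiskering of quasi-cartesian transformations, vertical closure having been recorded earlier in the paper. Your identification of the left-whiskering step as the one place where the characterisation theorem, rather than the bare definition of analyticity, is genuinely needed is precisely the point of stating this as a corollary of that theorem.
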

\begin{remark}\label{FinalRemark}
The 
$2$-category
$\AF$ provides $2$-dimensional models of the typed and untyped
lambda calculus and of the typed and untyped differential lambda calculus
(\cf~\cite{FioreFOSSACS,Esp}).
\end{remark}

\section*{References}


\end{document}